\documentclass[11pt,a4paper,oneside,reqno]{amsbook}
\usepackage[utf8]{inputenc}
\usepackage{hyperref}
\usepackage{url}
\usepackage{mathrsfs}

\title[Harnack Inequality]{Harnack-type inequalities for nonlinear evolution equations}
\author{Jessica Slegers}
\date{November 2020}

\usepackage{amsmath,amssymb}
\usepackage{enumerate,enumitem}
\setlist[enumerate]{itemsep=6pt}
\setenumerate{label=(\roman*)}

\usepackage{etoolbox}
\makeatletter
\patchcmd{\@maketitle}{\newpage}{}{}{} 
\makeatother

\usepackage{titletoc}
\titlecontents{section}[0em]{\hspace{12pt}}{\thecontentslabel\hspace{0.5em}}{}{\mdseries\titlerule*[1em]{.}\contentspage}[\smallskip]

\usepackage{titlesec}

\titleformat{\section}{\centering\scshape}{\thesection.}{6pt}{}
\titleformat{\subsection}{\bfseries}{\thesubsection}{6pt}{}

\renewcommand{\thesection}{\arabic{chapter}.\arabic{section}}

\makeatletter
\renewcommand{\@makeschapterhead}[1]{%
%  \vspace*{50\p@}%
  \vspace*{0\p@}%
  {\parindent \z@ \raggedright
    \normalfont
    \interlinepenalty\@M
    \Large \centering \bfseries  #1\par\nobreak
%    \vskip 40\p@
    \vskip 20\p@
  }}
\makeatother

%theorems and equations
\usepackage{amsthm}

\newtheoremstyle{theorem}{}{}{\itshape}{}{\bfseries}{.}{ }{}
\theoremstyle{theorem}
\newtheorem{theorem}{Theorem}[chapter]
\newtheorem{proposition}[theorem]{Proposition}
\newtheorem{lemma}[theorem]{Lemma}
\newtheorem{corollary}[theorem]{Corollary}

\newtheoremstyle{definition}{}{}{}{}{\bfseries}{.}{ }{}
\theoremstyle{definition}
\newtheorem{definition}{Definition}[chapter]
\newtheorem{remark}{Remark}[chapter]

\numberwithin{equation}{chapter} %numbers equations by section
\usepackage{comment} %for commenting out proofs

%symbols
\newcommand{\R}{\mathbb{R}}
\newcommand{\N}{\mathbb{N}}
\DeclareMathOperator*{\divergence}{div}
\DeclareMathOperator*{\diam}{diam}
\DeclareMathOperator*{\supp}{supp}
\DeclareMathOperator{\loc}{loc}

\usepackage{graphicx}

\begin{document}
\frontmatter
\maketitle

\begin{center}A thesis submitted in partial fulfilment of\\ the requirements for the degree of\\ B.Sc. (Honours)

\vspace{9.5cm}

\includegraphics[height=3cm]{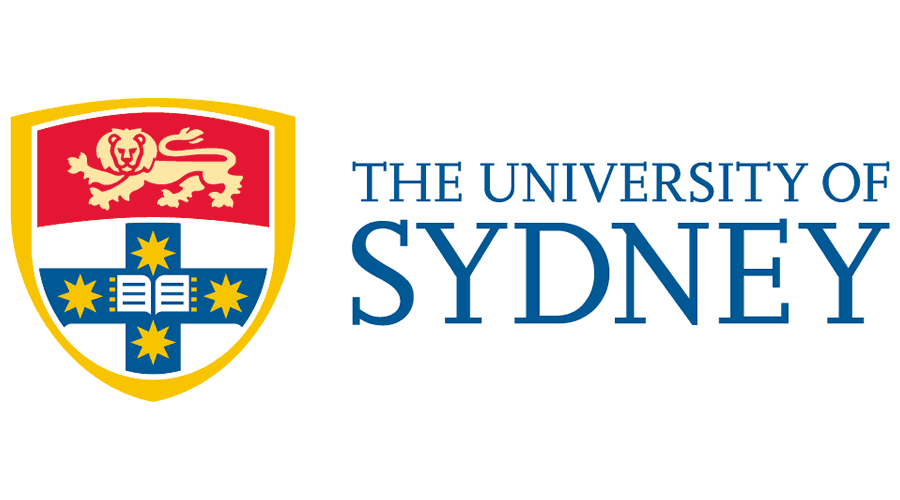}

\vspace{0.5cm}

School of Mathematics and Statistics

University of Sydney

Australia

\vspace{6pt}

November 2021

\end{center}
\chapter*{Preface}

Harnack inequalities are useful qualitative tools for understanding the properties of partial differential equations. Originally discovered as a property of harmonic functions, Harnack inequalities have since been studied for solutions of wider classes of elliptic and parabolic problems.

In this monograph, we take particular interest in deriving Harnack inequalities for solutions of nonlinear evolution equations. We focus on exploring the methods introduced by Li and Yau in the case of the linear heat equation and later extended to nonlinear problems by Auchmuty and Bao. Prior to presenting these results, we study a minimisation problem, which appears naturally in the proofs.

After establishing a family of three general Harnack inequality results by Auchmuty and Bao, we investigate applications to deriving Harnack inequalities satisfied by solutions of the porous medium equation and weak solutions of the parabolic problem associated with the $p$-Laplace operator, which we refer to here as the $p$-diffusion equation. 

Finally, we demonstrate a common application of Harnack inequalities by proving the local space-time H\"older continuity of solutions to a class of linear evolution problems. The proof is based on methods introduced by Moser during his seminal work on Harnack inequalities during the 1960s.

We conclude by suggesting potential opportunities for future work following on from the topics discussed here.

\vspace{1.5cm}

{\let\clearpage\relax \chapter*{Acknowledgements}}

I would like to express my deepest gratitude to my supervisor and mentor Dr. Daniel Hauer for his generous support, encouragement, and kindness. His insights are incredibly valuable to me and have helped me to see mathematics and life more generally in new ways. His unbounded enthusiasm for mathematics has helped to keep me motivated during what has been a challenging, but rewarding year. I thank him for his warm welcome to the world of mathematical research and I look forward to our continued collaboration in future projects.

I would also like to thank my fellow Honours students for the support we have given each other throughout the year. It has been wonderful getting to know you all.

I also wish to thank my mother and father for their ongoing support throughout my education.

Finally, I would like to express my appreciation for the University of Sydney for its award of an Honours Scholarship, which has supported my studies this year.

\tableofcontents

\newpage

\mainmatter
\chapter{Introduction}\label{sec:intro}
When studying partial differential equations, it is important to develop qualitative and quantitative tools, such as inequalities, which can be used to discover features of their solutions, for example, their H\"older continuity or maximum principles. 

In 1887, Harnack \cite{Harnack1887} proved the following inequality for positive harmonic functions defined on the open ball $B_R(x_0) \subseteq \R^2$:
\begin{equation*}\label{eq:OGHarnack}
    \frac{R-r}{R+r}u(x_0) \leq u(x) \leq \frac{R+r}{R-r}u(x_0).
\end{equation*}
This inequality holds for all $x \in B_r(x_0)$ and $r<R$. We note that throughout this monograph, a function $u$ defined on $\Omega$ will be called \emph{positive} if $u(x)>0$ for a.e. $x \in \Omega$ and \emph{nonnegative} if $u(x) \geq 0$ for a.e. $x \in \Omega$. An analogous formulation of this result may also be given when $u$ is a positive harmonic function on a domain $\Omega \subseteq \R^d$ for $d \geq 1$. Often, Harnack's inequality is stated in the following form
\begin{equation}\label{eq:meanvalue}
    \gamma^{-1} \sup_{B_r(x_0)}u \leq u(x_0) \leq \gamma \inf_{B_r(x_0)}u,
\end{equation} which is due to Kellogg \cite{Kellogg1929}. Here, we assume that $\bar{B}_r(x_0) \subseteq \Omega$ and importantly, the constant $\gamma > 1$ is independent of the function $u$. Alternatively, the ball $B_r(x_0)$ may be replaced with any subdomain $\Omega'$ compactly contained in $\Omega$, meaning that $\overline{\Omega'} \subseteq \Omega$.

Harnack's inequality has several important consequences in the theory of harmonic functions.
\begin{enumerate}
    \item (Liouville's Theorem, \cite{Evans}) Every nonnegative harmonic function on $\R^d$ is constant.
    \item (Removable Singularity Theorem, \cite{Axler}) Let $d \geq 3$. If a function $u:B_r(0)\setminus \{0\} \rightarrow \R$ is harmonic and $u(x) = o(|x|^{2-d})$ for $x \rightarrow 0$, then $u(0)$ can be defined so that $u:B_r(0) \rightarrow \R$ is a harmonic function. 
    \item (Harnack's First Convergence Theorem, \cite{GilbargTrudinger}) If a sequence of harmonic functions on a bounded domain $\Omega \subset \R^d$, which are continuous on $\bar{\Omega}$, converges uniformly on the boundary $\partial \Omega$, then the sequence converges uniformly in $\Omega$ to a harmonic function.
    \item (Harnack's Second Convergence Theorem, \cite{GilbargTrudinger}) If a sequence $(u_n)$ of harmonic functions on a connected open set $\Omega \subset \R^d$ is monotone increasing and there exists $x_0 \in \Omega$ such that $(u_n(x_0))$ is convergent, then $(u_n)$ converges uniformly on every compact subset of $\Omega$ to a harmonic function $u$.
\end{enumerate}

Following Harnack's discovery, attention has been given to proving similar inequalities for parabolic equations. In the parabolic case, a time dependence can be seen in the resulting inequalities. In 1954, Hadamard \cite{Hadamard1954} and Pini \cite{Pini1954} independently derived Harnack-type inequalities of the form
\begin{equation}\label{eq:paraharnack}
    \gamma^{-1} \sup_{x \in B_r(x_0)}u(x, t_0-r^2) \leq u(x_0, t_0) \leq \gamma \inf_{x \in B_r(x_0)}u(x, t_0+r^2)
\end{equation}
for positive solutions to the heat equation, 
\begin{equation}\label{eq:heat}
u_t = \Delta u \qquad \text{in }\Omega \times (0,\infty).
\end{equation}
In inequality \eqref{eq:paraharnack}, we see a waiting time from $t_0 - r^2$ to $t_0$ and from $t_0$ to $t_0 + r^2$. Using the physical interpretation of \eqref{eq:heat} as describing the distribution of heat in a medium, this means that if the temperature $u$ is known at a point $(x_0,t_0)$, then after waiting some time, the temperature everywhere in a neighbourhood of $x_0$ will be at least $\frac{u(x_0,t_0)}{\gamma}$. This waiting time is in fact necessary for the result to hold, as has been demonstrated using counterexamples, such as the following one contributed by Moser \cite{Moser1964}.

Suppose there exists $\gamma >1$ independent of $u$ such that \begin{equation}\label{eq:contradiction}
    \sup_{x \in K} u(x,t_0) \leq \gamma \inf_{x \in K}u(x,t_0)
\end{equation}
for all solutions $u$ of \begin{equation}u_t = u_{xx}\qquad \text{in } \R \times (0,\infty),\label{eq:onedimheat}\end{equation} where $K \subset \subset \R$ and $t_0>0$ is fixed. The function $$u_{\xi}(x,t):=t^{-1/2}e^{-(x+\xi)^2/4t}$$ is known to be a solution of \eqref{eq:onedimheat} for all $\xi \in \R$. Observe that for any $x_0>0$ fixed, one has that $$\lim_{\xi \rightarrow -\infty} \frac{u_{\xi}(0,1)}{u_{\xi}(x_0,1)} = \lim_{\xi \rightarrow -\infty} e^{x_0^2/4}e^{x_0\xi/4}=0.$$ Applying \eqref{eq:contradiction} for some $K \subset \subset \R$ containing $x_0$ and $0$ yields $$\left(\frac{u_{\xi}(0,1)}{u_{\xi}(x_0,1)} \right)^{-1} \leq \gamma.$$ Taking $\xi \rightarrow -\infty$, we reach a contradiction, since the left-hand side becomes unbounded in the limit.

In 1964, Moser \cite{Moser1964} also expanded on this work by obtaining a similar result for weak solutions of more general parabolic differential equations with variable coefficients, that is, equations in the following divergence form
\begin{equation}\label{eq:para}
    \frac{\partial u}{\partial t} = \sum_{k,l = 1}^{n} \frac{\partial}{\partial x_k}\left( a_{kl}(x,t)\frac{\partial u}{\partial x_l}\right) \qquad \text{in }\Omega \times (0,T).
\end{equation}
Here, the coefficients $a_{kl}(x,t)$ are measurable functions with $a_{kl} = a_{lk}$ that satisfy the uniform ellipticity condition \begin{equation}\label{eq:ellipticity}\lambda^{-1} \leq \sum_{k,l = 1}^{n} a_{kl}(x,t) \xi_k\xi_l \leq \lambda\end{equation} for all $\xi \in \R^d$ with $\sum_k \xi^2_k=1$. In particular, Moser proved that there exists a positive constant $C$ such that \begin{equation*}\label{eq:cylinder}
    \sup_{B_r(x_0) \times (t_1^-,t_2^-)} u\leq C \inf_{B_r(x_0) \times (t_1^+,t_2^+)} u
\end{equation*} for all nonnegative weak solutions $u$ of \eqref{eq:para} where $r>0$ is such that $\bar{B}_r(x_0) \subseteq \Omega$ and
$0< t_1^- < t_2^- < t_1^+ < t_2^+ < T$. Here, one must understand the supremum and infimum in the sense of the essential supremum and essential infimum respectively. Moser then used the oscillation of weak solutions of \eqref{eq:para} to prove that these are space-time H\"older continuous on the interior of a rectangular domain. Although H\"older continuity had already been established by Nash \cite{Nash1958} in 1958, Moser presented a new method for proving this result, which we will explore in Chapter \ref{sec:hoelder}. 

Research in pursuit of Harnack-type inequalities has been heavily influenced by the work of Li and Yau \cite{LiYau1986}, who developed new methods for deriving such results. In their analysis of positive solutions of the heat equation \eqref{eq:heat} on $\R^d \times (0,T)$, the sharp gradient estimate
\begin{equation}
\label{eq:liyauestimate}
\frac{|\nabla u|^2}{u^2} - \frac{u_t}{u} \leq \frac{d}{2t}
\end{equation}
was found. By integrating this inequality over a path connecting $(x_1,t_1)$ and $(x_2, t_2)$ in $\R^d \times (0,T)$ with $0 < t_1 < t_2<T$, it follows that for all such points
\begin{equation}\label{eq:heatineq}
    u(x_2,t_2) \geq u(x_1,t_1) \left(\frac{t_1}{t_2} \right)^{d/2} e^{-\tfrac{|x_2-x_1|^2}{4(t_2-t_1)}}.
\end{equation}
We remark that Li and Yau's results have particular significance, since they hold more generally on complete Riemannian manifolds with nonnegative Ricci curvature. However, the exploration of Harnack inequalities on manifolds is beyond the scope of this monograph and we instead restrict our attention to the Euclidean space $\R^d$.

Harnack-type estimates analogous to \eqref{eq:paraharnack} have also been formulated and proven for parabolic and elliptic equations in non-divergence form, for example, by Krylov and Safonov \cite{KrylovSafonov1981,Safonov1983}.  Similar to the case of operators in divergence form, this result can be used to prove the H\"older continuity of the solutions.

Another topic of interest is the notion of a \emph{boundary Harnack inequality}, which was first introduced by Kemper \cite{Kemper1972} in 1972 for harmonic functions and solutions of the heat equation. We now recall one of the main results from this paper. Let $\Omega$ be a bounded domain in $\R^d$ and $\Gamma$ a compact subset of the boundary $\partial \Omega$. Let $\Omega' \subset \Omega$ be a domain such that $\partial \Omega' \cap \partial \Omega$ is compactly contained in the interior of $\Gamma$. Then for $x_0 \in \Omega'$, there exists a constant $C>0$ such that
\begin{equation*}\label{eq:KemperBoundary}
    \sup_{x \in \Omega'} u(x) \leq Cu(x_0)
\end{equation*} for every nonnegative harmonic function $u$ that vanishes on $\Gamma$. Unlike the case of inequality \eqref{eq:meanvalue}, it does not make sense for a similar two-sided inequality to be obtained here. Indeed, the assumption that $u$ vanishes on $\Gamma$ guarantees that the infimum of $u$ will be zero. Thus, any attempt to control the supremum by the infimum in the usual manner would only be valid for $u \equiv 0$.

\section{Modelling}\label{sec:modelling}
Throughout this monograph, we are interested in studying some particular parabolic equations, specifically the linear heat equation, the porous medium equation, and the $p$-diffusion equation. Each of these equations arise naturally in physical phenomena and will be introduced below.

First considering the porous medium equation, let $M>1$ and consider $u$ to be a scalar-valued function representing the density of a gas flowing in a porous medium $\Omega \subseteq \R^d$. The evolution of the density $u$ is governed by the continuity equation 
\begin{equation}\label{eq:PMEcont}
    u_t + \divergence (uV) =0 \qquad \text{in } \Omega \times [0,\infty)
\end{equation}
where $V$ denotes the velocity of the gas \cite{Esteban-Vazquez-1988}. Assuming that the flow is laminar, the velocity $V$ is related to the pressure $f$ of the gas by the linear Darcy law $V = -\nabla f$ and the pressure $f$ is proportional to $u^{M-1}$. Using a rescaling argument if necessary, we choose the constant of proportionality to be $\frac{M}{M-1}$. Combining this information yields the relationship
$$V=-\nabla \big( (\tfrac{M}{M-1})u^{M-1}\big).$$
By a direct calculation, it can be seen that $uV = -\nabla (u^M)$. Inserting this into \eqref{eq:PMEcont}, we arrive at the porous medium equation
\begin{equation*}
    u_t = \Delta (u^M) \qquad \text{in } \Omega \times [0,\infty).
\end{equation*}

Next, seeking to understand the $p$-diffusion equation, we suppose that $u$ represents the electric potential in a medium $\Omega \subseteq \R^d$. Let $\sigma: \Omega \rightarrow [0,\infty]$ be a measureable function corresponding to the conductivity of the medium $\Omega$. We follow the explanation provided in \cite{Brander}. Ohm's law states that the electric current density $J$ obeys $J=-\sigma \nabla u$. Combining this with Kirchhoff's law, $\divergence(J)=0$, we obtain the linear conductivity equation
\begin{equation}\label{eq:linear conductivity equation}
    \divergence(\sigma \nabla u)=0 \qquad \text{in } \Omega.
\end{equation}
Alternatively, using the continuity equation $u_t + \divergence(J)=0$ instead of Kirchhoff's law produces the associated parabolic equation
\begin{equation}\label{eq:parabolic linear conductivity equation}
    u_t = \divergence(\sigma \nabla u) \qquad \text{in } \Omega \times [0,\infty).
\end{equation}
In the case $\sigma \equiv 1$ corresponding to a medium of uniform conductivity, we recover from \eqref{eq:linear conductivity equation} and \eqref{eq:parabolic linear conductivity equation} the usual Laplace equation $\Delta u = 0$ as well as the linear heat equation $u_t = \Delta u$, best known as a model of heat diffusion. 

Several media encountered in the physical world do not obey the linear Ohm's law. One possible alternative is that the current density $J$ satisfies a power law $J =-\sigma |\nabla u|^{p-2}\nabla u$ for some $1 < p < \infty$. Consequently, by Kirchhoff's law, $u$ instead satisfies
\begin{equation*}
    \divergence(\sigma |\nabla u|^{p-2}\nabla u)= 0 \qquad \text{in } \Omega.
\end{equation*}
Again, by setting $\sigma \equiv 1$, we recover the $p$-Laplace equation
\begin{equation*}
    \Delta_p u:= \divergence(|\nabla u|^{p-2}\nabla u)=0 \qquad \text{in } \Omega.
\end{equation*}
Using a continuity equation, the related parabolic equation, which we call the $p$-diffusion equation, can be derived as
\begin{equation*}
    u_t = \Delta_p u \qquad \text{in } \Omega \times [0,\infty).
\end{equation*}

\section{Main results}\label{sec:main-results}

We begin by reviewing some  Harnack-type inequalities discovered by Auchmuty and Bao \cite{Auchmuty-Bao-1994}, whose methods were heavily inspired by the work of Li and Yau. In particular, the ideas of Li and Yau were adapted to derive Harnack inequalities for solutions to parabolic equations satisfying a more general gradient estimate.

\begin{theorem}[General Harnack inequalities, \cite{Auchmuty-Bao-1994}]\label{thm:generalharnackineq}
Let $\Omega \subseteq \R^d$ be an open convex set and let $f$ be a positive continuously differentiable function on $\Omega_T:= \Omega \times (0,T)$, for which there exist constants $C>0$, $p>1$, $r \in \R$, and a function $a \in L_{\loc}^1(0,T)$ such that 
\begin{equation}\label{eq:thm1}
    \frac{\partial f}{\partial t} + af \geq \frac{C|\nabla f|^p}{f^r} \qquad \text{in } \Omega_T.
\end{equation}
Then for all $x_1,x_2 \in \Omega$ and $0 < t_1 < t_2 <T$:
\begin{enumerate}[label=({\roman*})]
    \item if $r=p-1$, then
    \begin{equation*}\label{eq:casei}
        f(x_2,t_2)\geq \frac{e^{A(t_1)}}{e^{A(t_2)}}f(x_1,t_1)e^{-\frac{\xi|x_2-x_1|^q}{(t_2-t_1)^{q-1}}};
    \end{equation*}
    \item if $r>p-1$, then
    \begin{equation*}\label{eq:caseii}\begin{split}
        f(x_2,t_2)&\geq \frac{e^{A(t_1)}}{e^{A(t_2)}}f(x_1,t_1)\times \\&\qquad (1+m\xi|x_2-x_1|^qI^{1-q}[f(x_1,t_1)]^m e^{mA(t_1)})^{\frac{-1}{m}};\end{split}
    \end{equation*}
    \item if $r<p-1$, then
    \begin{equation}\label{eq:caseiiia}
        [f(x_2,t_2)]^{|m|}\geq \left( \frac{e^{A(t_1)}}{e^{A(t_2)}} \right)^{|m|} \left( [f(x_1,t_1)]^{|m|} - \frac{|m| \, \xi \, |x_2-x_1|^qI^{1-q}}{e^{|m|A(t_1)}} \right).
    \end{equation}
    If the quantity in the large parentheses above is nonnegative, then
    \begin{equation*}\label{eq:caseiiib}
        f(x_2,t_2)\geq \frac{e^{A(t_1)}}{e^{A(t_2)}}f(x_1,t_1) \left( 1 - \frac{|m| \, \xi \, |x_2-x_1|^qI^{1-q}}{[f(x_1,t_1)]^{|m|}e^{|m|A(t_1)}} \right)^{\frac{1}{|m|}},
    \end{equation*}
\end{enumerate}
where $q:= \frac{p}{p-1}$, $m:=\frac{r}{p-1}-1$, $\xi := \frac{1}{q}(\frac{1}{pC})^{q-1}$, $A(t)$ is an antiderivative of $a(t)$, and $I:=\int_{t_1}^{t_2}e^{m(p-1)A(t)} \ \mathrm{d}t$.

In addition, if $f$ is nonnegative on $\Omega_T$ and satisfies inequality \eqref{eq:thm1} with $r=0$, then \eqref{eq:caseiiia} holds for all $x_1,x_2 \in \Omega$ and $0 < t_1 < t_2 <T$.
\end{theorem}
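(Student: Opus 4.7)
The strategy is the Li--Yau trick of integrating the pointwise gradient estimate along a curve in spacetime connecting $(x_1,t_1)$ to $(x_2,t_2)$, but with two refinements: (a) the curve is allowed to traverse the segment $\overline{x_1 x_2}$ (which lies in $\Omega$ by convexity) with a variable speed $\phi'(s)$ to be optimized, and (b) Young's inequality is applied in a form tailored to the exponent $r$ so that the resulting one-dimensional inequality is of Bernoulli type. Concretely, for any absolutely continuous $\phi:[t_1,t_2]\to[0,1]$ with $\phi(t_1)=0$ and $\phi(t_2)=1$, set $\gamma(s)=x_1+\phi(s)(x_2-x_1)$ and $F(s)=f(\gamma(s),s)$; the chain rule together with hypothesis \eqref{eq:thm1} yields
\begin{equation*}
\frac{d}{ds}\log F(s)\;\geq\;-a(s)+\frac{C|\nabla f|^p}{f^{r+1}}-\frac{|\nabla f|\,|\gamma'(s)|}{f},
\end{equation*}
evaluated at $(\gamma(s),s)$.

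Next I would apply the elementary inequality $Cy^p-y w\geq -\xi\, w^q$ (valid for $y,w\geq 0$, with the constant $\xi=\tfrac{1}{q}(pC)^{1-q}$ and $q=p/(p-1)$ obtained by minimizing the left-hand side in $y$), taking $y=|\nabla f|/f^{(r+1)/p}$ and $w=|\gamma'(s)|/f^{-m/q}$. Using $q(p-r-1)/p=-m$ the $f$-factors collapse to $f^m$, so the estimate becomes
\begin{equation*}
\frac{d}{ds}\log F(s)\;\geq\;-a(s)-\xi\,\phi'(s)^q\,|x_2-x_1|^q\,F(s)^m.
\end{equation*}
Now substitute $g(s):=e^{A(s)}F(s)$ to remove the $-a(s)$ term; the inequality reads $g'(s)\,g(s)^{-m-1}\geq -\xi\,\phi'(s)^q\,|x_2-x_1|^q\,e^{-mA(s)}$.

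The three cases in the theorem correspond to integrating this Bernoulli-type inequality for $m=0$, $m>0$, and $m<0$ respectively. In case $m=0$ (i.e.\ $r=p-1$) the left-hand side integrates directly to $\log(g(t_2)/g(t_1))$; in case $m>0$ it integrates to $-\tfrac{1}{m}(g^{-m})'$, giving an upper bound on $g(t_2)^{-m}$; in case $m<0$ it integrates to $\tfrac{1}{|m|}(g^{|m|})'$, giving a lower bound on $g(t_2)^{|m|}$. In each case one is left with the integral $\int_{t_1}^{t_2}\phi'(s)^q e^{-mA(s)}\,ds$, which I minimize over admissible $\phi$ using Hölder's inequality: since $q$ and $p$ are conjugate,
\begin{equation*}
1=\int_{t_1}^{t_2}\phi'(s)\,ds\leq \Bigl(\int_{t_1}^{t_2}\phi'(s)^q e^{-mA(s)}\,ds\Bigr)^{1/q}\Bigl(\int_{t_1}^{t_2}e^{m(p-1)A(s)}\,ds\Bigr)^{1/p},
\end{equation*}
so the minimum equals $I^{1-q}$, attained at $\phi'(s)\propto e^{m(p-1)A(s)}$. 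Substituting this minimum and unpacking $g$ back to $f$ in each of the three cases will produce precisely the three stated inequalities, together with the root-extraction step that yields the second form in case (iii).

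The main technical obstacle is really just bookkeeping: choosing the correct exponents in the Young step (where the exponent $-m$ must appear on $f$ if the subsequent Bernoulli substitution is to work), and keeping track of signs when $m<0$ so that the inequality orientation is preserved after multiplication by $g^{|m|}$. For the addendum, when $r=0$ the quantity $|\nabla f|^p/f^r$ reduces to $|\nabla f|^p$, so no division by $f$ ever occurs in the Young step; consequently the entire argument goes through verbatim for merely nonnegative $f$, and the resulting inequality is precisely case (iii) with $m=-1$, giving \eqref{eq:caseiiia}.
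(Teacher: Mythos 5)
Your proposal is correct and follows the same overall skeleton as the paper's argument (Young's inequality applied to the gradient estimate, chain rule along a spacetime curve, a Bernoulli-type reduction via the integrating factor $e^{A(t)}$, and an optimisation over the curve). The substantive difference is in the optimisation step. The paper restricts the path only by its endpoints, obtains the inequality for every $C^1$ curve, and then invokes the full variational machinery of Chapter \ref{sec:min} (Theorem \ref{generalised weight min}, which is developed via coercivity, convexity, G\^ateaux differentiability, and an Euler--Lagrange analysis) to identify $\min \int |\dot{x}|^q w\,\mathrm{d}t = |x_2-x_1|^q I^{1-q}$. You restrict from the start to reparametrisations $\gamma(s) = x_1 + \phi(s)(x_2-x_1)$ of the segment and deduce the same minimum value directly from H\"older's inequality, $1 = \int \phi' \leq (\int (\phi')^q e^{-mA})^{1/q}(\int e^{m(p-1)A})^{1/p}$, with equality at $\phi' \propto e^{m(p-1)A}$. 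Since an explicit good curve is all one needs for a lower bound on $f(x_2,t_2)$, this shortcut is legitimate and bypasses Chapter \ref{sec:min} entirely; in exchange it does not prove the segment reparametrisation is optimal among all curves, but that is not required. The exponent bookkeeping in your Young step ($y = |\nabla f|/f^{(r+1)/p}$, $w = |\gamma'| f^{m/q}$, $(r+1)/p - m/q = 1$) is correct and matches the paper's $\phi = (f^r/pC)^{1/p}$, $W = -\dot x$.

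One small gloss in the nonnegative addendum: you write the intermediate inequality as a bound on $\frac{d}{ds}\log F(s)$ with $y = |\nabla f|/f^{(r+1)/p}$ and $w = |\gamma'| f^{m/q}$, both of which still involve division by $f$ even when $r=0$; and $\log F$ itself is undefined at zeros of $f$. The claim ``no division by $f$ ever occurs'' therefore does not hold verbatim in your parametrisation. The fix is the same as the paper's: multiply the chain-rule inequality through by $f$ before applying Young, i.e.\ use $y = |\nabla f|$, $w = |\gamma'|$, giving $F' + aF \geq -\xi|\gamma'|^q$ and hence $(Fe^A)' \geq -\xi e^A |\gamma'|^q$ directly, with no logarithm and no negative power of $f$. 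With that adjustment your addendum argument is correct and coincides with the paper's case $m=-1$.
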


The key idea in the proof of this theorem is to connect a pair of points $(x_1,t_1), (x_2,t_2) \in \Omega_T$ by an arbitrary continuously differentiable path $x(t)$. By manipulating the gradient estimate \eqref{eq:thm1}, an estimate of the time derivative of the function $f$ along the path $x(t)$ may be found. After integrating this new inequality over an optimally chosen path $x(t)$ and some rearrangement, the final results of Theorem \ref{thm:generalharnackineq} may be obtained. This proof will be discussed in further detail in Chapter \ref{sec:genharnack}, along with a second version of this result posed under weaker assumptions (Theorem \ref{thm:gen-dist}). We also present another similar result obtained from a different gradient estimate (Theorem \ref{thm:gen-backwards}).

As applications of Theorem \ref{thm:generalharnackineq}, we derive Harnack inequalities for solutions to three significant parabolic equations, which may all be found in \cite{Auchmuty-Bao-1994}. The first equation we consider is the linear heat equation \eqref{eq:heat}, the result for which was stated earlier in the introduction, but we formulate here more precisely.
\begin{theorem}\label{thm:heat}
Let $u$ be a positive solution of the heat equation
\begin{equation*}
    u_t = \Delta u \qquad \text{in } \R^d \times (0,T).
\end{equation*}
Then $u$ satisfies
\begin{equation*}
    u(x_2,t_2) \geq u(x_1,t_1) \left(\frac{t_1}{t_2} \right)^{d/2} e^{-\tfrac{|x_2-x_1|^2}{4(t_2-t_1)}}.
\end{equation*}
for all $x_1,x_2 \in \R^d$ and $0 < t_1 < t_2 < T$.
\end{theorem}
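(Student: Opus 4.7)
The strategy is to verify that the hypothesis \eqref{eq:thm1} of Theorem \ref{thm:generalharnackineq} holds for $f = u$ in case (i), and then read off the conclusion. The essential input is the Li--Yau gradient estimate \eqref{eq:liyauestimate}, which asserts that any positive solution $u$ of the heat equation on $\R^d \times (0,T)$ satisfies $|\nabla u|^2/u^2 - u_t/u \leq d/(2t)$.

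First I would multiply \eqref{eq:liyauestimate} through by the positive quantity $u$ and rearrange to obtain
\begin{equation*}
u_t + \frac{d}{2t}\, u \;\geq\; \frac{|\nabla u|^2}{u} \qquad \text{in } \R^d \times (0,T),
\end{equation*}
which is precisely \eqref{eq:thm1} with $C = 1$, $p = 2$, $r = 1$, and $a(t) = d/(2t) \in L^1_{\loc}(0,T)$. Since $r = p-1$, case (i) of Theorem \ref{thm:generalharnackineq} is the relevant one, and $\R^d$ is of course an open convex set, so the remaining hypotheses are met.

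Next I would compute the constants appearing in the conclusion. From the definitions in the theorem, $q = p/(p-1) = 2$ and $\xi = (1/q)(1/(pC))^{q-1} = 1/4$. Choosing the antiderivative $A(t) = (d/2)\log t$ of $a$, we have $e^{A(t_1)}/e^{A(t_2)} = (t_1/t_2)^{d/2}$. Substituting these values into the case (i) inequality yields
\begin{equation*}
u(x_2,t_2) \;\geq\; \Bigl(\tfrac{t_1}{t_2}\Bigr)^{d/2} u(x_1,t_1) \exp\!\Bigl(-\tfrac{|x_2 - x_1|^2}{4(t_2-t_1)}\Bigr),
\end{equation*}
which is the claimed Harnack estimate.

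The proof is therefore a short matter of matching parameters, and the real content lies in the two ingredients being combined. Theorem \ref{thm:generalharnackineq} is available by assumption. The nontrivial part---and the main obstacle in a self-contained account---is the derivation of \eqref{eq:liyauestimate} itself, which would proceed by applying the parabolic maximum principle to an auxiliary function such as $F = t\bigl(|\nabla \log u|^2 - (\log u)_t\bigr)$, using Bochner's identity to control $F_t - \Delta F$ and conclude $F \leq d/2$. Since \eqref{eq:liyauestimate} is quoted from the introduction, I would simply cite it here and leave the detailed derivation to a separate discussion.
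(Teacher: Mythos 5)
Your proposal is correct and follows essentially the same route as the paper: both derive the gradient estimate $u_t + (d/2t)u \geq |\nabla u|^2/u$ from the Li--Yau inequality, identify the parameters $C=1$, $p=2$, $r=1$, $a(t)=d/(2t)$, and read off case (i) of Theorem \ref{thm:generalharnackineq}, computing $q=2$, $\xi=1/4$, and $e^{A(t_1)}/e^{A(t_2)}=(t_1/t_2)^{d/2}$. The paper's Chapter 4 treatment merely unpacks the Young's inequality and path-minimisation steps from the general theorem's proof for pedagogical purposes rather than citing the theorem directly, but the underlying argument is the one you give.
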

In combination with \eqref{eq:liyauestimate}, this result may be recovered from Theorem \ref{thm:generalharnackineq} with $a(t):= \frac{d}{2t}$, $C=1$, $p=2$, and $r=1$.

We also consider two nonlinear variants of the heat equation, namely the porous medium equation and the $p$-diffusion equation. The proof in both cases relies on applying an Aronson-B\'enilan-type estimate to arrive at a gradient estimate of the form \eqref{eq:thm1}. The result in the case of the porous medium equation can then be obtained from Theorems \ref{thm:generalharnackineq} and \ref{thm:gen-backwards} with $a(t):= \frac{M-1}{M-1+\frac{2}{d}} \frac{1}{t}$, $C=1$, $p=2$ and $r=0$.

\begin{theorem}\label{thm:pme}
Let $M> M_0(d):=\max\{ 0, 1-\frac{2}{d}\}$ and let $u$ be a positive solution to the porous medium equation
\begin{equation*}\label{eq:PME}
    u_t = \Delta (u^M) \qquad \text{in } \R^d \times (0,T).
\end{equation*} Then, for all $x_1,x_2 \in \R^d$ and $0<t_1<t_2<T$, one has that
\begin{enumerate}
\item if $M>1$, then
\begin{equation*}
    \hspace{1.2cm}[u(x_2,t_2)]^{M-1} \geq \left( \frac{t_1}{t_2} \right)^{\mu}\left[[u(x_1,t_1)]^{M-1} - \frac{M-1}{M}\frac{\delta|x_2-x_1|^2}{4(t_2^{\delta} - t_1^{\delta})}\frac{1}{t_1^{\mu}}\right];
\end{equation*}
\item if $M=1$, then the porous medium equation reduces to the heat equation \eqref{eq:heat} and the result of Theorem \ref{thm:heat} holds;
\item if $M_0(d) < M < 1$, then 
\begin{equation*}
    \hspace{1.5cm}[u(x_2,t_2)]^{1-M} \geq \left( \frac{t_2}{t_1} \right)^{\mu}\left[[u(x_1,t_1)]^{M-1} - \frac{M-1}{M}\frac{\delta|x_2-x_1|^2}{4(t_2^{\delta} - t_1^{\delta})}\frac{1}{t_1^{\mu}}\right]^{-1},\label{eq:harnackPME-v2}
\end{equation*}
\end{enumerate}
where $\mu:=\frac{M-1}{M-1+\frac{2}{d}}$ and $\delta :=1-\mu$.
\item 
\end{theorem}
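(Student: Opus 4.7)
The plan is to apply Theorem \ref{thm:generalharnackineq} and its backwards-time counterpart Theorem \ref{thm:gen-backwards} to the pressure variable $v := \tfrac{M}{M-1}u^{M-1}$. A direct computation from the PME shows that $v$ satisfies
\[
v_t = (M-1)\,v\,\Delta v + |\nabla v|^2 \qquad \text{in } \R^d \times (0,T),
\]
and the Aronson--B\'enilan estimate $\Delta v \geq -\tfrac{1}{(M-1+2/d)\,t}$ then yields the gradient bound
\[
v_t + \frac{\mu}{t}\,v \geq |\nabla v|^2,
\]
which is precisely hypothesis \eqref{eq:thm1} with $a(t)=\mu/t$, $C=1$, $p=2$, $r=0$.

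For case (i), $M>1$, the variable $v$ is positive and Theorem \ref{thm:generalharnackineq}(iii) applies since $r=0<p-1=1$; one computes $m=-1$, $|m|=1$, $q=2$, $e^{A(t)}=t^\mu$, $\xi=\tfrac{1}{4}$, and $I=\delta^{-1}(t_2^\delta-t_1^\delta)$ with $\delta=1-\mu$. Substituting into \eqref{eq:caseiiia} and multiplying through by $\tfrac{M-1}{M}$ to pass from $v$ back to $u^{M-1}$ gives the claim. Case (ii), $M=1$, is the heat equation and is covered by Theorem \ref{thm:heat}. For case (iii), $M_0(d)<M<1$, the constant $\tfrac{M}{M-1}$ is negative, so the positive pressure becomes $w := \tfrac{M}{1-M}u^{M-1}$, which satisfies $w_t = (1-M)\,w\,\Delta w - |\nabla w|^2$; the Aronson--B\'enilan inequality reverses under this sign change and produces an estimate of the form $w_t + |\nabla w|^2 \leq |\mu|\,w/t$, which is exactly the hypothesis handled by the backwards Harnack result Theorem \ref{thm:gen-backwards}. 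Applying it with the analogous parameters and then raising to the $(1-M)$-th power delivers the stated inequality, with the factor $(t_2/t_1)^\mu$ arising from $\mu<0$.

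The principal obstacle is the Aronson--B\'enilan estimate itself, which requires differentiating the pressure equation to derive a parabolic inequality for $\Delta v$ and then invoking a maximum principle; the sharp constant $\tfrac{1}{M-1+2/d}$ is non-trivial and its fast-diffusion counterpart is more delicate. The restriction $M>M_0(d)$ is exactly what guarantees $M-1+2/d>0$, so that $\mu$ and $\delta$ are well defined, and carefully tracking the signs of $\mu$, of the Aronson--B\'enilan inequality in the fast-diffusion regime, and of the $(M-1)$-power in the final conversion back to $u$ is the most delicate bookkeeping in the proof.
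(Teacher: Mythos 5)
Your proposal follows essentially the same route as the paper: the pressure substitution $v=\tfrac{M}{M-1}u^{M-1}$ (and $w=\tfrac{M}{1-M}u^{M-1}$ for $M<1$), the Aronson--B\'enilan estimate to obtain the gradient inequality $v_t+\tfrac{\mu}{t}v\geq|\nabla v|^2$, and then Theorems \ref{thm:generalharnackineq} and \ref{thm:gen-backwards} with $a(t)=\mu/t$, $C=1$, $p=2$, $r=0$, $m=-1$, matching the paper step for step. The only slip is in case (iii), where you say you ``raise to the $(1-M)$-th power'' to pass from $[u]^{M-1}$ to $[u]^{1-M}$; what is actually done is simply taking reciprocals of both (strictly positive) sides, i.e.\ raising to the power $-1$, which is also what produces the exponent $-1$ on the bracket in \eqref{eq:harnackPME-v2}.
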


Finally we obtain a result for the $p$-diffusion equation by using Theorems \ref{thm:gen-dist} and \ref{thm:gen-backwards} with $a(t)=\frac{\gamma K}{t}$, $C=1$ and $r=0$. Here $\gamma:=\frac{p-2}{p-1}$ and $K$ is a constant obtained from a relevant Aronson-B\'enilan-type estimate.
\begin{theorem}\label{thm:pdiff}
Let $\frac{2d}{d+1} < p < \infty$ and let $u$ be a positive solution of the $p$-diffusion equation,
\begin{equation*}\label{eq:pdiff}
    \frac{\partial u}{\partial t} = \divergence(|\nabla u|^{p-2}\nabla u) \qquad \text{in }  \R^d \times (0,T).
\end{equation*}
Then, for all $x_1,x_2 \in \R^d$ and $0 < t_1 < t_2 <T$, one has that
\begin{enumerate}
    \item if $p>2$, then 
    \begin{equation*}\label{eq:pdiffharnack}
    \hspace{1cm}[u(x_2,t_2)]^{\gamma} \geq \left( \frac{t_1}{t_2} \right)^{\gamma K} \left( [u(x_1,t_1)]^{\gamma} -\gamma \xi|x_2-x_1|^qI^{1-q}t_1^{-\gamma K} \right);
\end{equation*}
    \item if $p=2$, then the $p$-diffusion equation reduces to the heat equation \eqref{eq:heat} and the result of Theorem \ref{thm:heat} holds;
    \item if $\frac{2d}{d+1} < p < 2$, then 
    \begin{equation*}\label{eq:pdiff-lower-p}
        \hspace{1.4cm}[u(x_2,t_2)]^{-\gamma} \geq \left( \frac{t_2}{t_1} \right)^{\gamma K} \left( [u(x_1,t_1)]^{\gamma} -\gamma \xi|x_2-x_1|^qI^{1-q}t_1^{-\gamma K} \right)^{-1},
    \end{equation*}
\end{enumerate}
where $\gamma:=\frac{p-2}{p-1}$, $q:=\frac{p}{p-1}$, $\xi:=\frac{1}{q} \left(\frac{1}{p} \right)^{q-1}$, $\delta:=(2-p)K+1$, and 
$$I:=\begin{cases}
\frac{t_2^{\delta} - t_1^{\delta}}{\delta} & \delta \ne 0, \\
\log t_2 - \log t_1 & \delta=0.
\end{cases}$$
\end{theorem}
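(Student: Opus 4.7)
The proof would follow the same high-level strategy as the one used for the porous medium equation (Theorem \ref{thm:pme}): first establish an Aronson-B\'enilan-type gradient estimate for positive solutions of the $p$-diffusion equation, then apply the general Harnack theorems to the auxiliary function $f := u^{\gamma}$ with $\gamma := \frac{p-2}{p-1}$. The choice of $f$ is natural because $u^{\gamma}$ is (up to a multiplicative constant) the pressure variable associated with the $p$-Laplacian flow, and it is precisely this power whose evolution fits the hypothesis \eqref{eq:thm1} of Theorem \ref{thm:generalharnackineq} with $r = 0$.

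The first step is to derive the Aronson-B\'enilan-type estimate. For $p > \frac{2d}{d+1}$, every positive solution of $u_t = \divergence(|\nabla u|^{p-2}\nabla u)$ on $\R^d \times (0,T)$ satisfies a pointwise bound which, when recast in terms of $f = u^{\gamma}$, takes the form
\[
f_t + \frac{\gamma K}{t}\, f \;\geq\; |\nabla f|^{p}
\qquad \text{in } \R^d \times (0,T),
\]
for some constant $K = K(d,p)$; this estimate is traditionally obtained by applying a Bernstein-type maximum principle to the pressure variable and leveraging the differential inequality satisfied by $\Delta_p u$. This matches \eqref{eq:thm1} with $C = 1$, $r = 0$ and $a(t) = \gamma K/t$, placing us in case (iii) of Theorem \ref{thm:generalharnackineq} ($r = 0 < p - 1$). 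The parameters then specialise to $m = -1$, $|m| = 1$, $q = \frac{p}{p-1}$, $\xi = \frac{1}{q}(1/p)^{q-1}$ and $A(t) = \gamma K \log t$; a direct integration of $I = \int_{t_1}^{t_2} e^{m(p-1)A(t)}\,\mathrm{d}t = \int_{t_1}^{t_2} t^{\delta - 1}\,\mathrm{d}t$ with $\delta = (2-p)K + 1$ gives the stated formula for $I$, with the logarithmic case accounting for $\delta = 0$.

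The remainder of the argument splits according to the sign of $\gamma$. For $p > 2$ we have $\gamma > 0$, so $f = u^{\gamma}$ is positive and monotone in $u$; applying Theorem \ref{thm:gen-dist} and rearranging yields part (i). For $p = 2$ the equation reduces to the heat equation and the conclusion is Theorem \ref{thm:heat}. For $\frac{2d}{d+1} < p < 2$ we have $\gamma < 0$, so that $f = u^{\gamma}$ reverses the order of $u$; here Theorem \ref{thm:gen-backwards} is tailored to produce an inequality with the reversed time factor $(t_2/t_1)^{\gamma K}$, and rewriting the conclusion in terms of $u^{-\gamma}$ delivers part (iii). The main obstacle lies in the first step, namely establishing the pointwise gradient estimate with precisely the constants $C = 1$ and $r = 0$: unlike in the porous medium case, the nonlinearity of $\Delta_p$ enters in an essential way through the pressure change of variables, and the algebraic manipulations producing the $|\nabla f|^{p}$ term with the correct coefficient depend delicately on the exponent $p$ and on the threshold $\frac{2d}{d+1}$. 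Once this estimate is in hand, the rest is a routine specialisation of the general theorems.
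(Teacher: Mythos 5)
Your high-level strategy is the same as the paper's: obtain an Aronson--B\'enilan-type estimate, pass to the pressure variable, and feed the resulting gradient inequality into the general Harnack theorems, splitting the analysis by the sign of $\gamma$. Two points, however, deserve comment because they constitute genuine omissions rather than mere stylistic differences.

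First, the pointwise gradient estimate you single out as ``the main obstacle'' is not something the paper establishes; it is imported wholesale as the Esteban--V\'azquez inequality (Lemma~\ref{lem:estvaz}), which asserts $\divergence(|\nabla f|^{p-2}\nabla f) \geq -K/t$ in $\mathscr{D}'(\R^d_T)$ for $f = \tfrac{1}{\gamma}u^{\gamma}$. Combining this with the equation $\partial_t f = \gamma f \divergence(|\nabla f|^{p-2}\nabla f) + |\nabla f|^{p}$ satisfied by $f$ is then a one-line manipulation; the ``Bernstein-type maximum principle'' you gesture at lives entirely inside the cited lemma. Note also that the constant $C=1$ in the hypothesis~\eqref{eq:thm1} is only correct for the normalisation $f = \tfrac{1}{\gamma}u^{\gamma}$, not $f = u^{\gamma}$: the two differ by the factor $\gamma$, and under the rescaling $f \mapsto \gamma f$ the right-hand side picks up $\gamma^{1-p}$, changing $C$ and hence $\xi$. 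You acknowledge the multiplicative constant in passing but then quote the estimate as if $C = 1$ for $u^{\gamma}$ itself, which is inconsistent.

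Second, and more importantly, you skip entirely the issue of regularity. A positive solution of the $p$-diffusion equation is in general only $C^{1,\alpha}$, so the computation leading to the equation for $f$, the multiplication of the distributional inequality~\eqref{eq:estvaz} by $\gamma f$, and the application of the chain rule along curves are all formal and require justification. The paper devotes a substantial part of the argument to precisely this: it introduces a bespoke definition of weak solution for the transformed equation (Definition~\ref{def:our-def-f} and its analogue for $g$), proves that $f = \tfrac{1}{\gamma}u^{\gamma}$ satisfies it by invoking Proposition~\ref{prop:pdiffprops} together with careful chain- and product-rule arguments in the Sobolev setting, verifies that $f$ is Lipschitz continuous so that hypotheses (F1)--(F3) of Theorem~\ref{thm:gen-dist} hold, and only then concludes. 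Without this step the appeal to Theorem~\ref{thm:gen-dist} (and Theorem~\ref{thm:gen-backwards} for $p<2$) is unjustified, since those theorems require the weak-derivative and absolute-continuity hypotheses, not classical differentiability.
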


\chapter{Preliminaries}\label{sec:min}

\section{Minimisation of a Convex Functional}
During the proof of the general Harnack inequality results of Auchmuty and Bao \cite{Auchmuty-Bao-1994} in Chapter \ref{sec:genharnack}, our efforts to optimise the bounds in the inequalities will lead us to solving a minimisation problem. We discuss the details of this variational problem in this chapter.

Let $1 < q < \infty$, $q'=\frac{q}{q-1}$ be the conjugate exponent of $q$, and $0<t_1<t_2$. Let $w:[t_1,t_2]\rightarrow (0,\infty)$ be continuous. In the following, let $\|\cdot\|_q$ denote the usual norm on $L^q(t_1,t_2;\R^d)$ and let 
$$\|x\|_{q,w}:= \left( \int_{t_1}^{t_2}|x(t)|^qw(t)\ \mathrm{d}t\right)^{1/q} $$ be the weighted norm corresponding to the measure $d\mu(t) = w(t) \ \mathrm{d}t$ for the weighted Lebesgue space, which we denote by $L^q_w(t_1,t_2;\R^d)$. 

Then, denote by $W^{1,q}_w(t_1,t_2;\R^d)$ the weighted Sobolev space, which we understand as the space of functions $x \in L^q_w(t_1,t_2;\R^d)$ with  weak derivative also contained in $L^q_w(t_1,t_2;\R^d)$. We equip the space $W^{1,q}_w(t_1,t_2;\R^d)$ with a weighted Sobolev norm given by 
$$\|x\|_{W^{1,q}_{w}} :=\|x\|_{q,w} + \|\dot{x}\|_{q,w}.$$
Here, we understand $\dot{x}$ as the weak derivative of $x \in W^{1,q}_w(t_1,t_2;\R^d)$. In addition, we define the space $W^{1,q}_{w,0}(t_1,t_2;\R^d)$ as the closure of the test functions $C^{\infty}_c(t_1,t_2;\R^d)$ in $W^{1,q}_w(t_1,t_2;\R^d)$. For some basic properties of Sobolev spaces, we direct the reader to Appendix \ref{sobolev}, in which we provide a presentation of the usual unweighted Sobolev spaces, that is, $W^{1,q}_w(t_1,t_2;\R^d)$ with $w\equiv 1$. However, any important properties we require hold true for the weighted spaces used in this chapter.

Define a functional $E:W^{1,q}_w(t_1,t_2;\R^d) \rightarrow \R$ by
\begin{equation*}
    E(x):= \frac{1}{q}\int_{t_1}^{t_2}|\dot{x}(t)|^q w(t)\ \mathrm{d}t
\end{equation*} for all $x\in W^{1,q}_w(t_1,t_2;\R^d)$. We aim to minimise the functional $E$ over the affine space 
$$\mathcal{A}:= \{\xi_{0}\} \oplus W^{1,q}_{w,0}(t_1,t_2;\R^d),$$ where $\xi_{0}:[t_1,t_2]\rightarrow \R^d$ is a fixed representative of the set of functions $$\{ \xi \in W^{1,q}_w(t_1,t_2;\R^d) \mid \xi(t_1) = x_1, \xi(t_2) = x_2 \}$$ and $x_1,x_2 \in \R^d$ are fixed. For example, one may choose $\xi_{0}$ to be the straight line segment connecting $x_1$ and $x_2$ in $\R^d$. In addition, we note that $\mathcal{A}$ is a closed, convex subset of $W^{1,q}_w(t_1,t_2;\R^d)$.

The main goal of this chapter will be to prove the following result.

\begin{theorem}\label{generalised weight min} Let $1<q<\infty$, $0<t_1<t_2$, and $x_1, x_2 \in \R^d$. 
Let \mbox{$E:W^{1,q}_w(t_1,t_2;\R^d) \rightarrow \R$} be the functional defined by 
\begin{equation}E(x):=\frac{1}{q}\int_{t_1}^{t_2} |\dot{x}(t)|^q w(t) \ \mathrm{d}t,\label{eq:general functional}\end{equation}
where $w:[t_1,t_2]\rightarrow (0,\infty)$ is a continuous function. Then,
$$\min_{x \in \mathcal{A}} E(x) = \frac{|x_2-x_1|^q}{q\big( W(t_2) - W(t_1) \big)^{q-1}},$$
where $W(t):=\int^t \big(w(s)\big)^{\frac{1}{1-q}} \ ds$ is an antiderivative of $\big(w(t)\big)^{\frac{1}{1-q}}$.
\end{theorem}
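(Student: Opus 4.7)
The plan is to establish the inequality
\[
E(x) \geq \frac{|x_2-x_1|^q}{q\big(W(t_2)-W(t_1)\big)^{q-1}}
\]
for every $x \in \mathcal{A}$ via Hölder's inequality, and then exhibit an explicit competitor that saturates this bound; this will simultaneously give the sharp lower bound and prove that a minimiser exists.

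For the lower bound, any $x \in \mathcal{A}$ admits an absolutely continuous representative with $x(t_1)=x_1$ and $x(t_2)=x_2$, so $x_2 - x_1 = \int_{t_1}^{t_2} \dot{x}(t)\,\mathrm{d}t$. I would factor the integrand as $\bigl(|\dot{x}(t)|^q w(t)\bigr)^{1/q}\cdot w(t)^{-1/q}$ and apply Hölder's inequality with conjugate exponents $q$ and $q'$ to obtain
\[
|x_2-x_1| \leq \Bigl(\int_{t_1}^{t_2}|\dot{x}(t)|^q w(t)\,\mathrm{d}t\Bigr)^{1/q} \Bigl(\int_{t_1}^{t_2} w(t)^{-q'/q}\,\mathrm{d}t\Bigr)^{1/q'}.
\]
Since $-q'/q = \tfrac{1}{1-q}$, the second factor equals $\big(W(t_2)-W(t_1)\big)^{1/q'}$. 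Raising both sides to the $q$th power and rearranging yields the claimed lower bound on $E(x)$.

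For sharpness, equality in Hölder forces $|\dot{x}(t)|^q w(t)$ to be proportional to $w(t)^{1/(1-q)}$, which, combined with the endpoint condition, singles out the candidate $x^*$ determined by
\[
\dot{x}^*(t) := c\, w(t)^{\frac{1}{1-q}}, \qquad c := \frac{x_2-x_1}{W(t_2)-W(t_1)} \in \R^d.
\]
Indeed $x^*(t_2)-x^*(t_1) = c\,(W(t_2)-W(t_1)) = x_2-x_1$ by construction, and the computation $|\dot{x}^*(t)|^q w(t) = |c|^q w(t)^{1/(1-q)}$ gives $E(x^*) = \frac{|c|^q}{q}(W(t_2)-W(t_1))$, which rearranges exactly to the claimed minimum value.

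I expect the only real technical obstacle to be verifying that $x^*$ actually lies in $\mathcal{A}$, i.e. that $x^* - \xi_0 \in W^{1,q}_{w,0}(t_1,t_2;\R^d)$. Because $w$ is continuous and strictly positive on the compact interval $[t_1,t_2]$, it is bounded above and bounded away from zero, so the weighted Sobolev norm is equivalent to the unweighted $W^{1,q}$ norm; hence $\dot{x}^* \in L^q_w$, the resulting $x^*$ lies in $W^{1,q}_w(t_1,t_2;\R^d)$ with the correct endpoint values, and the standard density of $C^\infty_c$ in the zero-trace Sobolev space supplies the required containment. The Hölder inequality itself is then the easy part.
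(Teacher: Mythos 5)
Your proposal is correct, and it takes a genuinely different and more elementary route than the paper. The paper proceeds through the abstract machinery of convex minimisation on reflexive Banach spaces: it establishes coercivity, convexity, and Gâteaux differentiability of $E$, invokes a general existence theorem, characterises the minimiser via the Euler--Lagrange equation $\{|\dot v|^{q-2}\dot v\, w\}'=0$, solves the resulting ODE, and separately proves uniqueness using the monotonicity of $s\mapsto |s|^{q-2}s$. Your approach bypasses all of this: Hölder's inequality applied to $|\dot x(t)|w(t)^{1/q}\cdot w(t)^{-1/q}$ gives the sharp lower bound $E(x)\ge |x_2-x_1|^q/\bigl(q(W(t_2)-W(t_1))^{q-1}\bigr)$ directly (using $-q'/q = 1/(1-q)$ and $q/q'=q-1$), and the explicit path $\dot x^*(t)=c\,w(t)^{1/(1-q)}$ with $c=(x_2-x_1)/(W(t_2)-W(t_1))$ attains it; the identity $q/(1-q)+1=1/(1-q)$ makes the evaluation of $E(x^*)$ immediate. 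Your argument about membership of $x^*$ in $\mathcal A$ is also sound: the weight is continuous and bounded away from zero and infinity on $[t_1,t_2]$, so the weighted norm is equivalent to the unweighted one, and the characterisation of $W^{1,q}_{w,0}$ by vanishing endpoint values applies. What the paper's route buys that yours does not claim is uniqueness of the minimiser; what yours buys is a self-contained, short, Hölder-only argument that makes existence and the exact value transparent without invoking reflexivity, weak lower semicontinuity, or subdifferential calculus. Both prove the stated theorem.
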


In practice, it is more convenient to understand the value of $\min_{x \in \mathcal{A}} E(x)$ by the equivalent expression, $$\min_{x \in W^{1,q}_{w,0}(t_1,t_2;\R^d)} E(\xi_0 + x).$$
Hence, we first study properties of the mapping $x \mapsto E(\xi_0 + x)$ as a functional on the space $W^{1,q}_{w,0}(t_1,t_2;\R^d)$, which will aid us in the proof. Throughout, we assume $q>1$ and $E$ is the functional given by \eqref{eq:general functional}.

\begin{proposition}\label{poincare}
The space $W^{1,q}_{w,0}(t_1,t_2;\R^d)$ with the weighted norm $\|\cdot\|_{q,w}$ satisfies a Poincar\'e inequality, that is, there exists $C>0$ such that 
\begin{equation}\label{eq:poincare}
    \|x\|_{q,w} \leq C \|\dot{x}\|_{q,w}
\end{equation}
for all $x \in W^{1,q}_{w,0}(t_1,t_2;\R^d)$.
\end{proposition}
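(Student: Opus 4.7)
The plan is to exploit the fact that $w$ is a continuous, strictly positive function on the compact interval $[t_1,t_2]$, so that the weighted and unweighted $L^q$-norms are equivalent; once this is observed, the weighted Poincaré inequality reduces to the classical one.

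First I would fix constants. Since $w$ is continuous on $[t_1,t_2]$ and strictly positive, by the extreme value theorem there exist $0 < m \leq M < \infty$ with $m \leq w(t) \leq M$ for every $t \in [t_1,t_2]$. Consequently, for every measurable $v:[t_1,t_2] \to \R^d$,
\begin{equation*}
    m^{1/q}\|v\|_{L^q} \leq \|v\|_{q,w} \leq M^{1/q}\|v\|_{L^q},
\end{equation*}
where $\|\cdot\|_{L^q}$ denotes the standard (unweighted) $L^q$-norm on $(t_1,t_2;\R^d)$.

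Next I would prove the inequality on the dense subspace $C^{\infty}_c(t_1,t_2;\R^d)$. For such $x$, the fundamental theorem of calculus gives $x(t) = \int_{t_1}^t \dot{x}(s)\,\mathrm{d}s$, and Hölder's inequality yields $|x(t)| \leq (t_2-t_1)^{1/q'}\|\dot{x}\|_{L^q}$, so that after integrating and taking $q$-th roots one obtains the classical Poincaré inequality $\|x\|_{L^q} \leq (t_2-t_1)\|\dot{x}\|_{L^q}$. Chaining this with the two-sided comparison above gives
\begin{equation*}
    \|x\|_{q,w} \leq M^{1/q}\|x\|_{L^q} \leq M^{1/q}(t_2-t_1)\|\dot{x}\|_{L^q} \leq \left(\frac{M}{m}\right)^{1/q}(t_2-t_1)\,\|\dot{x}\|_{q,w},
\end{equation*}
which is exactly \eqref{eq:poincare} with $C := (M/m)^{1/q}(t_2-t_1)$.

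Finally I would extend by density. Given $x \in W^{1,q}_{w,0}(t_1,t_2;\R^d)$, by definition there is a sequence $(x_n) \subset C^{\infty}_c(t_1,t_2;\R^d)$ converging to $x$ in the weighted Sobolev norm, so in particular $x_n \to x$ in $\|\cdot\|_{q,w}$ and $\dot{x}_n \to \dot{x}$ in $\|\cdot\|_{q,w}$. Passing to the limit in the inequality $\|x_n\|_{q,w} \leq C\|\dot{x}_n\|_{q,w}$ yields the desired bound for $x$. I do not anticipate a genuine obstacle here: the only thing to watch is that the two-sided bound $m \leq w \leq M$ really does hold uniformly, which is guaranteed by continuity on a compact interval together with the assumption that $w$ takes values in $(0,\infty)$ (so that the minimum $m$ is strictly positive, not merely nonnegative).
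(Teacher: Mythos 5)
Your proof is correct and follows essentially the same strategy as the paper: establish the two-sided equivalence of $\|\cdot\|_{q,w}$ and $\|\cdot\|_q$ via the extreme value theorem on $[t_1,t_2]$, derive the unweighted Poincar\'e inequality by writing $x(t)=\int_{t_1}^t\dot{x}$ and applying H\"older, and then chain the three inequalities. The only procedural difference is that the paper invokes the boundary characterisation $x(t_1)=0$ for general $x\in W^{1,q}_{w,0}$ directly, whereas you prove the estimate on $C^\infty_c$ and extend by density; both are sound, and your slightly cruder constant $(M/m)^{1/q}(t_2-t_1)$ (versus the paper's, which retains a factor $q^{-1/q}$) is immaterial for the claim.
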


\begin{proof}
We first note that the weighted norm $\|\cdot\|_{q,w}$ is equivalent to the usual norm $\|\cdot\|_q$. The weight function $w$ is continuous on the compact interval $[t_1,t_2]$. Therefore, by the extreme value theorem, $w$ attains a minimum and maximum value on $[t_1,t_2]$. In particular, the positivity of $w$ guarantees both its minimum and maximum values are positive. Therefore, we may define positive constants $C_1$ and $C_2$ such that $$C_1^q:=\min_{t\in [t_1,t_2]}w(t), \qquad  C_2^q:=\max_{t\in [t_1,t_2]}w(t).$$ Then,
$C_1^q \leq w(t) \leq C_2^q$ for all $t \in [t_1,t_2]$. Since $|x(t)|^q \geq 0$, it follows that
$$ C_1^q\int_{t_1}^{t_2}|x(t)|^q \ \mathrm{d}t  \leq \int_{t_1}^{t_2} |x(t)|^q w(t) \ \mathrm{d}t \leq C_2^q \int_{t_1}^{t_2}|x(t)|^q \ \mathrm{d}t$$ for all $x \in W^{1,q}_{w,0}(t_1,t_2;\R^d)$, which implies
$$C_1\|x\|_q \leq \|x\|_{q,w} \leq C_2 \|x\|_q.$$ Thus, the norms $\|\cdot\|_{q,w}$ and $\|\cdot\|_q$ are equivalent as claimed. Therefore, we  derive a Poincar\'e inequality for $\|\cdot\|_{q,w}$ using a similar result for $\|\cdot\|_q$.

Let $x \in W^{1,q}_{w,0}(t_1,t_2;\R^d)$ be arbitrary. By characterisation \ref{w0char} in the Appendix, $x(t_1)=0$. Using this fact and an application of H\"older's inequality, we have that for all $t \in (t_1,t_2)$,
\begin{align*}
    |x(t)| = |x(t)-x(t_1)| &\leq \int_{t_1}^t |\dot{x}(t)|\ \mathrm{d}t\\
    & \leq \left( \int_{t_1}^t |\dot{x}(t)|^q\ \mathrm{d}t\right)^{1/q}|t-t_1|^{1/q'}\\
    & \leq \|\dot{x}\|_q |t-t_1|^{1/q'}
\end{align*}
Therefore, $|x(t)|^q \leq \|\dot{x}\|_q^q |t-t_1|^{q/q'}$, and by integrating over $(t_1,t_2)$,
$$\int_{t_1}^{t_2}|x(t)|^q \ \mathrm{d}t\leq \|\dot{x}\|_q^q \int_{t_1}^{t_2}|t-t_1|^{q/q'} \ \mathrm{d}t.$$
Hence, we arrive at the  Poincar\'e inequality
\begin{equation*}
    \|x\|_q \leq C_3 \|\dot{x}\|_q,
\end{equation*}
for $\|\cdot\|_q$ where $C_3:= \left( \int_{t_1}^{t_2}|t-t_1|^{q/q'} \ \mathrm{d}t\right)^{1/q} = q^{-1/q}(t_2-t_1)$.

The Poincar\'e inequality for $\|\cdot\|_{q,w}$ is an immediate consequence of this result and the equivalence of the norms. Indeed,
$$\|x\|_{q,w} \leq C_2 \|x\|_q \leq C_2C_3\|\dot{x}\|_q \leq \frac{C_2 C_3}{C_1} \|\dot{x}\|_{q,w}$$
and we conclude
$$\|x\|_{q,w} \leq C \|\dot{x}\|_{q,w}$$ for $C:=\frac{C_2 C_3}{C_1}$.
\end{proof}

\begin{proposition}\label{E is coercive}
For a given function $\xi_0 \in W^{1,q}_{w}(t_1,t_2;\R^d)$, the mapping $x \mapsto E(\xi_o + x)$ is coercive on $W^{1,q}_{w,0}(t_1,t_2;\R^d)$.
\end{proposition}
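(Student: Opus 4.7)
The plan is to show that $E(\xi_0 + x) \to \infty$ whenever $\|x\|_{W^{1,q}_w} \to \infty$, since this is the standard meaning of coercivity for a functional of this form. The strategy is to first use a reverse triangle inequality to peel off the fixed contribution of $\xi_0$, then exploit the Poincar\'e inequality established in Proposition~\ref{poincare} to upgrade control of $\|\dot{x}\|_{q,w}$ into control of the full Sobolev norm $\|x\|_{W^{1,q}_w}$.

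Concretely, I would first rewrite
\begin{equation*}
E(\xi_0 + x) = \frac{1}{q}\|\dot{\xi}_0 + \dot{x}\|_{q,w}^{q}.
\end{equation*}
Since $\|\cdot\|_{q,w}$ is a genuine norm on $L^q_w(t_1,t_2;\R^d)$, the reverse triangle inequality gives
\begin{equation*}
\|\dot{\xi}_0 + \dot{x}\|_{q,w} \;\geq\; \|\dot{x}\|_{q,w} - \|\dot{\xi}_0\|_{q,w},
\end{equation*}
so that whenever $\|\dot{x}\|_{q,w} \geq 2\|\dot{\xi}_0\|_{q,w}$ we have the clean bound $\|\dot{\xi}_0 + \dot{x}\|_{q,w} \geq \tfrac{1}{2}\|\dot{x}\|_{q,w}$. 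This yields
\begin{equation*}
E(\xi_0 + x) \;\geq\; \frac{1}{q\,2^{q}}\|\dot{x}\|_{q,w}^{q}
\end{equation*}
for all sufficiently large $x$, and in particular $E(\xi_0 + x) \to \infty$ as $\|\dot{x}\|_{q,w} \to \infty$.

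Next I would invoke Proposition~\ref{poincare} to write $\|x\|_{q,w} \leq C\|\dot{x}\|_{q,w}$ for all $x \in W^{1,q}_{w,0}(t_1,t_2;\R^d)$, which implies
\begin{equation*}
\|x\|_{W^{1,q}_{w}} = \|x\|_{q,w} + \|\dot{x}\|_{q,w} \leq (C+1)\|\dot{x}\|_{q,w}.
\end{equation*}
Combining with the previous estimate gives, for all $x \in W^{1,q}_{w,0}(t_1,t_2;\R^d)$ with $\|\dot{x}\|_{q,w}$ sufficiently large,
\begin{equation*}
E(\xi_0 + x) \;\geq\; \frac{1}{q\,2^{q}(C+1)^{q}}\,\|x\|_{W^{1,q}_{w}}^{q},
\end{equation*}
so that $E(\xi_0 + x) \to \infty$ as $\|x\|_{W^{1,q}_{w}} \to \infty$, which is the desired coercivity.

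There is no real obstacle: the only subtlety is recognising that the inhomogeneous term $\xi_0$ is harmless because it contributes a bounded constant which is swallowed by the reverse triangle inequality for large $x$, and that the Poincar\'e inequality is exactly what is needed to promote the control of $\|\dot{x}\|_{q,w}$ to the full Sobolev norm. Both ingredients have already been prepared in the previous proposition, so the argument is essentially a two-line combination.
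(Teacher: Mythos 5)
Your proof is correct and uses the same two ingredients as the paper: the reverse triangle inequality to absorb the fixed $\xi_0$ contribution, and the Poincar\'e inequality of Proposition~\ref{poincare} to upgrade control of $\|\dot{x}\|_{q,w}$ to control of the full Sobolev norm. The only cosmetic difference is that you restrict to the regime $\|\dot{x}\|_{q,w} \geq 2\|\dot{\xi}_0\|_{q,w}$ to obtain a clean power-law bound, whereas the paper keeps the affine lower bound $(qE(\xi_0+x))^{1/q} \geq \frac{1}{C+1}\|x\|_{W^{1,q}_w} - \|\dot{\xi}_0\|_{q,w}$ valid for all $x$; both close the argument the same way.
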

\begin{proof}
Observe that the functional $E$ satisfies $$(qE(\xi_o + x))^{1/q} = \|\dot{\xi_0}+ \dot{x}\|_{q,w}$$ for all $x \in W^{1,q}_{w,0}(t_1,t_2;\R^d)$. Then, using the Poincar\'e inequality \eqref{eq:poincare} with $C$ defined as in the proof of Proposition \ref{poincare},
\begin{align*}
    (qE(\xi_0 + x))^{1/q} &= \|\dot{\xi_0} + \dot{x}\|_{q,w} + \|\dot{\xi_0}\|_{q,w} - \|\dot{\xi_0}\|_{q,w}\\
    &\geq \|\dot{x}\|_{q,w} - \|\dot{\xi_0}\|_{q,w}\\
    &= \frac{1}{C+1}(C\|\dot{x}\|_{q,w} + \|\dot{x}\|_{q,w})- \|\dot{\xi_0}\|_{q,w}\\
    &\geq \frac{1}{C+1}(\|x\|_{q,w}+\|\dot{x}\|_{q,w})- \|\dot{\xi_0}\|_{q,w}\\
    &=\frac{\|x\|_{W^{1,q}_{w}}}{C+1}- \|\dot{\xi_0}\|_{q,w}
\end{align*}
Remembering that $\|\dot{\xi_0}\|_{q,w}$ is a fixed constant, we see that $E(\xi_0 + x) \rightarrow \infty$ as $\|x\|_{W^{1,q}_{w}} \rightarrow \infty$. Thus, $E$ is coercive on $W^{1,q}_{w,0}(t_1,t_2;\R^d)$.
\end{proof}

\begin{proposition}\label{E is convex} The functional $E$ given by \eqref{eq:general functional} is convex and continuous on $W^{1,q}_{w}(t_1,t_2;\R^d)$.
\end{proposition}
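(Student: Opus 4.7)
The plan is to prove convexity and continuity separately, both by reducing $E$ to elementary operations on the weighted $L^q_w$ norm.

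For convexity, I would exploit the pointwise convexity of the map $y\mapsto|y|^q$ on $\R^d$, which holds for $q>1$ since it is the composition of the convex Euclidean norm with the convex non-decreasing function $s\mapsto s^q$ on $[0,\infty)$. Given $x,y \in W^{1,q}_w(t_1,t_2;\R^d)$ and $\lambda \in [0,1]$, linearity of the weak derivative tells me that $\lambda\dot{x}+(1-\lambda)\dot{y}$ is the weak derivative of $\lambda x+(1-\lambda)y$, so applying the pointwise convexity inequality at almost every $t$, multiplying through by the positive weight $w(t)$, and integrating over $(t_1,t_2)$ immediately yields
$$E(\lambda x+(1-\lambda)y) \leq \lambda E(x)+(1-\lambda)E(y).$$

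For continuity, I would rewrite $E(x) = \tfrac{1}{q}\|\dot{x}\|_{q,w}^{q}$ and view $E$ as the composition of three continuous maps: the weak derivative operator $x\mapsto \dot{x}$ from $W^{1,q}_w(t_1,t_2;\R^d)$ into $L^q_w(t_1,t_2;\R^d)$, whose continuity follows directly from the definition of the weighted Sobolev norm $\|x\|_{W^{1,q}_w}=\|x\|_{q,w}+\|\dot{x}\|_{q,w}$; the weighted norm $\|\cdot\|_{q,w}$ on $L^q_w(t_1,t_2;\R^d)$, whose continuity is just the reverse triangle inequality $\bigl|\|u\|_{q,w}-\|v\|_{q,w}\bigr|\leq \|u-v\|_{q,w}$; and the continuous scalar function $s\mapsto s^q/q$ on $[0,\infty)$. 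Composing these gives continuity of $E$ on all of $W^{1,q}_w(t_1,t_2;\R^d)$.

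Since both pieces reduce so cleanly to standard facts, no step presents a substantial obstacle; the real work is already absorbed in the choice of the weighted Sobolev norm and in the positivity and continuity of $w$ on the compact interval $[t_1,t_2]$, already exploited in Proposition \ref{poincare} to show that $\|\cdot\|_{q,w}$ is equivalent to $\|\cdot\|_q$ and hence a genuine norm rather than a mere seminorm.
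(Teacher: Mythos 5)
Your proposal is correct and follows essentially the same approach as the paper: convexity is obtained by establishing the pointwise convexity of $y\mapsto|y|^q$ (via the composition of the convex Euclidean norm with the convex, non-decreasing map $s\mapsto s^q$, which the paper verifies by direct computation) and then integrating against the positive weight, while continuity is obtained by writing $E(x)=\tfrac{1}{q}\|\dot{x}\|_{q,w}^q$. Your treatment of continuity is slightly more explicit than the paper's, decomposing $E$ into the three continuous maps $x\mapsto\dot{x}$, $\|\cdot\|_{q,w}$, and $s\mapsto s^q/q$, but the substance is identical.
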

\begin{proof}
Seeing that $E(x)$ can be written as $E(x) = \frac{1}{q}\|\dot{x}\|_{q,w}^q$, $E$ is clearly continuous on $W^{1,q}_{w}(t_1,t_2;\R^d)$ by the continuity of the norm $\|\cdot\|_{q,w}$. Thus, we focus on proving the convexity of $E$.

Let $q\geq 1$ and consider the smooth function $g(s)=s^q$ defined for $s\geq0$. Then,
$$\frac{d^2 g}{ds^2} = q(q-1)s^{q-2} \geq 0$$ for all $s \geq 0$ and therefore $g$ is convex. Hence, by the definition of convexity,
$$(\lambda t + (1-\lambda)s)^q \leq \lambda t^q + (1-\lambda)s^q$$ for all nonnegative $s,t$ and $\lambda \in [0,1]$. Also, by the properties of any norm, the Euclidean norm on $\R^d$ is a nonnegative convex function. Indeed, for $x,y \in \R^d$ arbitrary and for all $\lambda \in [0,1]$,
$$ |\lambda x + (1-\lambda)y| \leq|\lambda||x| + |1-\lambda||y|=\lambda|x| + (1-\lambda)|y|$$
Letting $f(x) = |x|^q$, we combine these results to yield that
\begin{align*}
    f(\lambda x + (1-\lambda)y) &= |\lambda x + (1-\lambda)y|^q \\
    &\leq (\lambda|x| + (1-\lambda)|y|)^q \\
    &\leq \lambda|x|^q+(1-\lambda)|y|^q \\
    &= \lambda f(x) + (1-\lambda)f(y)
\end{align*}
and therefore $f(x) =|x|^q$ is convex for all $q \geq 1$.

Now let $x,y \in W^{1,q}_{w}(t_1,t_2;\R^d)$. Then $\dot{x}(t),\dot{y}(t)\in \R^d$ for all $t\in(t_1,t_2)$, and so we may use the previous inequality to write
$$|\lambda \dot{x}(t) + (1-\lambda)\dot{y}(t)|^q\leq \lambda|\dot{x}(t)|^q+(1-\lambda)|\dot{y}(t)|^q$$ for all $\lambda \in [0,1]$. Since $w(t)>0$ for all $t \in [t_1,t_2]$, we multiply through by $\frac{1}{q} w(t)$ and integrate from $t_1$ to $t_2$ to obtain
\begin{align*}
    \frac{1}{q} \int_{t_1}^{t_2}|\lambda \dot{x}(t) + (1-\lambda)\dot{y}(t)|^q w(t) \ \mathrm{d}t&\leq \lambda\frac{1}{q}\int_{t_1}^{t_2}|\dot{x}(t)|^q w(t)\ \mathrm{d}t\\ &\hspace{1cm}+(1-\lambda)\frac{1}{q}\int_{t_1}^{t_2}|\dot{y}(t)|^q w(t)\ \mathrm{d}t.
\end{align*}
Hence, we conclude that $E$ is convex on $W^{1,q}_{w}(t_1,t_2;\R^d)$.
\end{proof}

In order to assist us in finding a minimiser of $E$, we introduce the notion of the G\^ateaux derivative, which can be understood as a generalisation of the classical directional derivative, which is sensible for functionals defined on (infinite-dimensional) Banach spaces. For more details regarding the application of the G\^ateaux derivatives to minimisation problems, we direct the reader to Appendix \ref{sec:convexmin}.

\begin{definition}[G\^ateaux derivative, \cite{Zeidler}]
Let $E:V \rightarrow (-\infty, +\infty]$ be a functional on a Banach space $V$. Then the \emph{G\^ateaux derivative} of $E$ at $x \in V$ in the direction $h \in V$ is given by
$$\lim_{t \rightarrow 0^+} \frac{E(x+th) - E(x)}{t}.$$
If this limit exists in all directions $h \in V$ and there exists $E'(x) \in V'$ such that
$$\langle E'(x), h\rangle_{V',V} = \lim_{t \rightarrow 0^+} \frac{E(x+th) - E(x)}{t}$$
for all $h \in V$, then we call $E$ G\^ateaux differentiable at $x$ with G\^ateaux derivative $E'(x)$.\end{definition}
We note that if the G\^ateaux derivative $E'(x)$ exists, then it is unique. In addition, a suitable candidate for the G\^ateaux derivative of $E$ at $x$ may be calculated as $$\left.\frac{d}{dt} E(x+th)\right|_{t=0}.$$ However, it is necessary to check that this function satisfies the definition in a rigorous sense, which we demonstrate below for the functional $E$ defined in \eqref{eq:general functional}.

\begin{proposition}\label{E is diff}
For a given function $\xi_0 \in W^{1,q}_{w}(t_1,t_2;\R^d)$, the mapping $x \mapsto E(\xi_0 + x)$ is G\^ateaux differentiable at all $x \in W^{1,q}_{w,0}(t_1,t_2;\R^d)$ with the G\^ateaux derivative at $x$ in the direction $h \in W^{1,q}_{w,0}(t_1,t_2;\R^d)$ given by
\begin{equation*}\label{eq:gatdiffe}
    \langle E'(\xi_0 + x), h\rangle_{W^{-1,q'}_0,W^{1,q}_0} := \int_{t_1}^{t_2} |\dot{\xi_0}(t) + \dot{x}(t)|^{q-2}\big(\dot{\xi_0}(t) + \dot{x}(t)\big)\dot{h}(t) w(t) \ \mathrm{d}t.
\end{equation*}
\end{proposition}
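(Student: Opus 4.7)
The plan is to verify two things: (a) the right-hand side of the claimed formula genuinely defines a bounded linear functional on $W^{1,q}_{w,0}(t_1,t_2;\R^d)$, so that it belongs to the appropriate dual space; and (b) this functional equals the limit of difference quotients in the definition of the G\^ateaux derivative. This is the standard recipe for showing that a formal derivative under the integral sign coincides with the true G\^ateaux derivative of a convex integral functional.

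For part (a), linearity in $h$ is immediate from the integral expression. For boundedness, I would apply H\"older's inequality with respect to the measure $w(t)\,\mathrm{d}t$ and conjugate exponents $q' = q/(q-1)$ and $q$ to obtain
\[\bigl|\langle E'(\xi_0+x), h\rangle\bigr| \leq \int_{t_1}^{t_2} |\dot{\xi_0}+\dot{x}|^{q-1}\,|\dot{h}|\,w\,\mathrm{d}t \leq \|\dot{\xi_0}+\dot{x}\|_{q,w}^{q-1}\,\|\dot{h}\|_{q,w},\]
which is dominated by a constant multiple of $\|h\|_{W^{1,q}_{w}}$.

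For part (b), I would fix $h$ and analyse the difference quotient pointwise in $s\in(t_1,t_2)$. Because $q>1$, the scalar function $v \mapsto \tfrac{1}{q}|v|^q$ is $C^1$ on $\R^d$ with gradient $|v|^{q-2}v$, extended continuously by $0$ at the origin. The fundamental theorem of calculus, applied in $t$ after writing $\Psi(t,s):=\tfrac{1}{q}|\dot{\xi_0}(s)+\dot{x}(s)+t\dot{h}(s)|^q\,w(s)$, yields
\[\frac{E(\xi_0+x+th)-E(\xi_0+x)}{t} = \int_{t_1}^{t_2}\!\!\int_0^1 |\dot{\xi_0}+\dot{x}+\theta t\,\dot{h}|^{q-2}\bigl(\dot{\xi_0}+\dot{x}+\theta t\,\dot{h}\bigr)\cdot\dot{h}\,w\,\mathrm{d}\theta\,\mathrm{d}s.\]
For $|t|\leq 1$ the integrand is uniformly dominated by $\bigl(|\dot{\xi_0}|+|\dot{x}|+|\dot{h}|\bigr)^{q-1}|\dot{h}|\,w$, which belongs to $L^1(t_1,t_2)$ by the same H\"older estimate used in part (a). Dominated convergence then lets me pass $t \to 0^+$ inside the integral and recover the claimed formula.

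The main obstacle is the degenerate regime $1<q<2$, where $v \mapsto |v|^{q-2}v$ is singular at $v=0$ and only H\"older continuous, so termwise differentiation of $|\dot{\xi_0}+\dot{x}+t\dot{h}|^{q-2}(\cdots)$ in $t$ is delicate. Framing the computation in terms of the $C^1$ function $v \mapsto \tfrac{1}{q}|v|^q$ (rather than $|v|^{q-2}v$ itself) sidesteps this, because the upper bound $|v|^{q-1}$ remains integrable against $|\dot{h}|\,w$ via H\"older regardless of the sign of $q-2$.
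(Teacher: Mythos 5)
Your proof is correct and achieves the same end as the paper, but via a slightly different technical route in part (b). The paper invokes the mean value theorem pointwise in $t$, producing for each $t$ a scalar $\theta(t)\in(0,1)$ with $|\dot v(t)+s\dot h(t)|^q-|\dot v(t)|^q = q|\dot v(t)+s\theta(t)\dot h(t)|^{q-2}\bigl(\dot v(t)+s\theta(t)\dot h(t)\bigr)\cdot s\dot h(t)$, then bounds $\bigl(|\dot v|+s\theta|\dot h|\bigr)^{q-1}$ using an elementary inequality of the form $(a+b)^{q-1}\leq K(a^{q-1}+b^{q-1})$ before applying dominated convergence. You instead use the fundamental theorem of calculus applied to the $C^1$ map $\tau\mapsto\tfrac{1}{q}|\dot\xi_0+\dot x+\tau\dot h|^q$, which replaces the pointwise selection $\theta(t)$ with an honest integral $\int_0^1\ldots\,\mathrm{d}\theta$ and lets you dominate the integrand directly by $(|\dot\xi_0|+|\dot x|+|\dot h|)^{q-1}|\dot h|\,w$ without the extra elementary inequality. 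Your variant is marginally cleaner: the FTC representation avoids the (usually glossed over, here harmless) question of whether the MVT selection $t\mapsto\theta(t)$ can be chosen measurably, and it gives a uniform dominant without introducing the constant $K$. Your observation about the degenerate regime $1<q<2$ is also well placed; both proofs rely on the fact that $v\mapsto|v|^q$ is $C^1$ even though $v\mapsto|v|^{q-2}v$ is not Lipschitz near the origin, and the dominated-convergence framing does not require that stronger regularity. Part (a) of your argument is essentially identical to the paper's verification that $E'(\xi_0+x)$ is linear and continuous via H\"older's inequality.
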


\begin{proof}
Fix $x \in W^{1,q}_{w,0}(t_1,t_2;\R^d)$ and set $v:=\xi_0 + x$. First, we verify that $E'(v)$ is indeed an element of the dual space $$(W^{1,q}_{w,0}(t_1,t_2;\R^d))'=: W^{-1,q'}_{w,0}(t_1,t_2;\R^d).$$ In particular, we show that $E'(v): W^{1,q}_{w,0}(t_1,t_2;\R^d) \rightarrow \R$ is linear and continuous. Let $h_1, h_2 \in W^{1,q}_{w,0}(t_1,t_2;\R^d)$ and $\lambda_1, \lambda_2 \in \R$ be arbitrary. Then, using the linearity of weak differentiation and integration,
\begin{align*}
    & \hspace{-0.5cm} \langle E'(v), \lambda_1 h_1 + \lambda_2 h_2\rangle_{W^{-1,q'}_{w,0},W^{1,q}_{w,0}} \\
    &= \int_{t_1}^{t_2} |\dot{v}(t)|^{q-2}\dot{v}(t)\big(\lambda_1\dot{h_1}(t) + \lambda_2\dot{h_2}(t)\big)w(t)\ \mathrm{d}t\\
    & = \lambda_1 \int_{t_1}^{t_2} |\dot{v}(t)|^{q-2}\dot{v}(t)\dot{h_1}(t) w(t)\ \mathrm{d}t + \lambda_2 \int_{t_1}^{t_2} |\dot{v}(t)|^{q-2}\dot{v}(t)\dot{h_2}(t)w(t)\ \mathrm{d}t\\
    & = \lambda_1 \langle E'(v),h_1\rangle_{W^{-1,q'}_{w,0},W^{1,q}_{w,0}}+\lambda_2 \langle E'(v),h_2\rangle_{W^{-1,q'}_{w,0},W^{1,q}_{w,0}}
\end{align*}
Therefore, $E'(v)$ is linear. Then, 
\begin{align*}
    & \hspace{-0.5cm} \big|\langle E'(v),h_1\rangle_{W^{-1,q'}_{w,0},W^{1,q}_{w,0}} - \langle E'(v),h_2\rangle_{W^{-1,q'}_{w,0},W^{1,q}_{w,0}}\big|\\
    &\leq  \int_{t_1}^{t_2} \big||\dot{v}(t)|^{q-2}\dot{v}(t)\big|\big|\dot{h_1}(t) - \dot{h_2}(t)\big|w(t) \ \mathrm{d}t\\
    &=\int_{t_1}^{t_2} |\dot{v}(t)|^{q-1}|\dot{h_1}(t) - \dot{h_2}(t)|w(t) \ \mathrm{d}t\\
    &\leq \left(\int_{t_1}^{t_2}|\dot{v}(t)|^{(q-1)q'} w(t) \ \mathrm{d}t\right)^{1/q'}\left(\int_{t_1}^{t_2}|\dot{h_1}(t) - \dot{h_2}(t)|^q w(t) \ \mathrm{d}t\right)^{1/q}
\end{align*}
by H\"older's inequality. The quantity
$$\int_{t_1}^{t_2}|\dot{v}(t)|^{(q-1)q'} w(t) \ \mathrm{d}t = \int_{t_1}^{t_2}|\dot{v}(t)|^{q} w(t) \ \mathrm{d}t$$
is finite since $|\dot{v}(t)| \in L^q_w(t_1,t_2;\R^d)$. Hence,
$$D:= \left(\int_{t_1}^{t_2}|\dot{v}(t)|^{q} w(t) \ \mathrm{d}t\right)^{1/q'}= \|\dot{v} \|_{q,w}^{q-1}<\infty.$$
It follows that 
\begin{align*}
    \big|\langle E'(v),h_1\rangle_{W^{-1,q'}_{w,0},W^{1,q}_{w,0}} - \langle E'(v),h_2\rangle_{W^{-1,q'}_{w,0},W^{1,q}_{w,0}}\big| &\leq D\|\dot{h_1} - \dot{h_2}\|_{q,w}\\
    &\leq D\|h_1 - h_2\|_{W^{1,q}_w}
\end{align*}
and therefore $E'(v)$ is also continuous. Thus, by the definition of the dual of a normed vector space, the linearity and continuity of $E'(v)$ imply that $E'(v) \in W^{-1,q'}_{w,0}(t_1,t_2;\R^d)$.

We now show $E'(v)$ is indeed the G\^ateaux derivative of $E$ at $v$, that is
$$\langle E'(v), h\rangle_{W^{-1,q'}_{w,0},W^{1,q}_{w,0}} = \lim_{s \rightarrow 0^+} \frac{E(v+sh)-E(v)}{s}$$ for all $h \in W^{1,q}_{w,0}(t_1,t_2;\R^d)$,
which is equivalent to proving
\begin{equation}\label{eq:gatlim}
    \lim_{s\rightarrow0^+} \left| \int_{t_1}^{t_2} \left( \frac{|\dot{v}(t) + s\dot{h}(t)|^q - |\dot{v}(t)|^q}{s} -|\dot{v}(t)|^{q-2}\dot{v}(t)\dot{h}(t)\right) w(t) \ \mathrm{d}t \right|=0.
\end{equation}
By applying the mean value theorem to the continuously differentiable map $v \mapsto |v|^q$ for every $t \in (t_1,t_2)$, there exists $\theta(t) \in (0,1)$ such that 
$$|\dot{v}(t) + s\dot{h}(t)|^q - |\dot{v}(t)|^q = |\dot{v}(t) + s\theta(t)\dot{h}(t)|^{q-2}\big(\dot{v}(t) + s\theta(t)\dot{h}(t)\big)s\dot{h}(t).$$
Inserting this into the limit in \eqref{eq:gatlim}, we calculate 
\begin{align*}
\begin{split}
    &\lim_{s\rightarrow 0^+} \bigg|\int_{t_1}^{t_2} \Big(|\dot{v}(t) + s\theta(t)\dot{h}(t)|^{q-2}\big(\dot{v}(t) + s\theta(t)\dot{h}(t)\big)\dot{h}(t) \\&\hspace{5.2cm}- |\dot{v}(t)|^{q-2}\dot{v}(t)\dot{h}(t)\Big) w(t) \ \mathrm{d}t \bigg|.
\end{split}
\end{align*}
Using the continuity of $v \mapsto |v|^{q-2}v$, 
$$\lim_{s \rightarrow 0^+} |\dot{v}(t) + s\theta(t)\dot{h}(t)|^{q-2}\big(\dot{v}(t) + s\theta(t)\dot{h}(t)\big)\dot{h}(t) = |\dot{v}(t)|^{q-2}\dot{v}(t)\dot{h}(t)$$
for all $t \in (t_1,t_2)$. Then, 
\begin{align*}
    &\hspace{-1cm} \big||\dot{v}(t) + s\theta(t)\dot{h}(t)|^{q-2}(\dot{v}(t) + s\theta(t)\dot{h}(t))\dot{h}(t)\big|\\
    & = |\dot{v}(t) + s\theta(t)\dot{h}(t)|^{q-1}|\dot{h}(t)|\\
    &\leq (|\dot{v}(t)| + s\theta(t)|\dot{h}(t)|)^{q-1}|\dot{h}(t)|
\end{align*}
Then, since $s\theta(t)>0$ and thus $s\theta(t)|\dot{h}(t)| \geq 0$, by an elementary inequality, there exists $K>1$ such that 
$$(|\dot{v}(t)| + s\theta(t)|\dot{h}(t)|)^{q-1} \leq K\big(|\dot{v}(t)|^{q-1} +(s\theta(t)|\dot{h}(t)|)^{q-1}\big)$$
for all $t \in (t_1,t_2)$. Therefore,
\begin{align*}
    &\hspace{-1cm} \big||\dot{v}(t) + s\theta(t)\dot{h}(t)|^{q-2}(\dot{v}(t) + s\theta(t)\dot{h}(t))\dot{h}(t)\big|\\
    & \leq K\big(|\dot{v}(t)|^{q-1} +(s\theta(t)|\dot{h}(t)|)^{q-1}\big)|\dot{h}(t)|\\
    & \leq K(|\dot{v}(t)|^{q-1} +|\dot{h}(t)|^{q-1})|\dot{h}(t)|=:g(t)
\end{align*}
since $s\theta(t) \in (0,1)$ for $s$ small enough. Since $\dot{v}(t),\dot{h}(t) \in L^q_w(t_1,t_2; \R^d)$ and are hence also in $L^{q-1}_w(t_1,t_2;\R^d)$, we have that $$g(t) = K(|\dot{v}(t)|^{q-1} +|\dot{h}(t)|^{q-1})|\dot{h}(t)| \in L^1_w(t_1,t_2;\R^d).$$ Then, by Lebesgue's dominated convergence theorem, 
\begin{equation*}
\begin{split}\lim_{s\rightarrow 0^+} \int_{t_1}^{t_2} \bigg|\bigg(|\dot{v}(t) + s\theta(t)\dot{h}(t)|^{q-2}\big(\dot{v}(t) + s\theta(t)\dot{h}(t)\big)\dot{h}(t) \\- |\dot{v}(t)|^{q-2}\dot{v}(t)\dot{h}(t)\bigg)\bigg| w(t) \ \mathrm{d}t =0.
\end{split}
\end{equation*}
Thus,
\begin{align*}
    0 &\leq \lim_{s\rightarrow0^+} \left| \int_{t_1}^{t_2} \left( \frac{|\dot{v}(t) + s\dot{h}(t)|^q - |\dot{v}(t)|^q}{s} -|\dot{v}(t)|^{q-2}\dot{v}(t)\dot{h}(t)\right) w(t) \ \mathrm{d}t \right|\\
    &\leq \lim_{s\rightarrow 0^+} \int_{t_1}^{t_2} \bigg|\bigg(|\dot{v}(t) + s\theta(t)\dot{h}(t)|^{q-2}\big(\dot{v}(t) + s\theta(t)\dot{h}(t)\big)\dot{h}(t)\\
    &\hspace{6.5cm}- |\dot{v}(t)|^{q-2}\dot{v}(t)\dot{h}(t)\bigg)\bigg| w(t) \ \mathrm{d}t \\
    &=0,
\end{align*}
which proves \eqref{eq:gatlim} and therefore, $E(v) = E(\xi_0 + x)$ is G\^ateaux differentiable with derivative $E'(v) = E'(\xi_0 + x)$ defined in Proposition \ref{E is diff}.
\end{proof}

We now ready to prove the main result of this section, Theorem \ref{generalised weight min}.

\begin{proof}
Since $W^{1,q}_{w,0}(t_1,t_2;\R^d)$ is a reflexive Banach space, we aim to apply Theorem \ref{convexminthm} from the Appendix, which provides sufficient conditions for the mapping $x \mapsto E(\xi_0 + x)$ to attain a minimum. Propositions \ref{E is coercive} and \ref{E is convex} provide that that this mapping is coercive, convex, and continuous on $W^{1,q}_{w,0}(t_1,t_2;\R^d)$, and thus also lower semicontinuous. Therefore, in view of Theorem \ref{convexminthm}, the mapping $x \mapsto E(\xi_0 + x)$ attains a minimum on $W^{1,q}_{w,0}(t_1,t_2;\R^d)$. Equivalently, letting $v=\xi_0 + x$, $E(v)$ attains a minimum on $\mathcal{A}$.

It is known from Proposition \ref{E is diff} that the mapping $x \mapsto E(\xi_0 +x)$ is G\^ateaux differentiable for all $x \in W^{1,q}_{w,0}(t_1,t_2;\R^d)$. Since $E$ is also convex, it follows from Propositions \ref{subdiffmin} and \ref{subdiffgateaux} that any point $v = \xi_0 + x \in \mathcal{A}$ which minimises $E(v) = E(\xi_0 + x)$ can be found as a point which satisfies
\begin{equation}\label{eq:predirichlet}
   \langle E'(v), h\rangle_{W^{-1,q'}_{w,0},W^{1,q}_{w,0}} = \int_{t_1}^{t_2} | \dot{v}(t)|^{q-2}\dot{v}(t)\dot{h}(t) w(t) \ \mathrm{d}t = 0
\end{equation}
for all $h \in W^{1,q}_{w,0}(t_1,t_2;\R^d)$.

Now, using that $\int_{t_1}^{t_2}|\dot{v}(t)|^q w(t) \ \mathrm{d}t < \infty$, we have that
$$\int_{t_1}^{t_2} \big||\dot{v}(t)|^{q-2}\dot{v}(t) w(t)\big|^{q'} \ \mathrm{d}t = \int_{t_1}^{t_2} |\dot{v}(t)|^{q} (w(t))^{q'} \ \mathrm{d}t< \infty,$$
therefore, $|\dot{v}(t)|^{q-2}\dot{v}(t) w(t) \in L^{q'}(t_1,t_2; \R^d)$. Then, write \eqref{eq:predirichlet} as 
$$\int_{t_1}^{t_2} |\dot{v}(t)|^{q-2}\dot{v}(t)\dot{h}(t) w(t) \ \mathrm{d}t = 0 = -\int_{t_1}^{t_2} 0 \cdot h(t) \ \mathrm{d}t$$
for all $h \in W^{1,q'}_{w,0}(t_1,t_2;\R^d)$ and thus for all $h \in C_c^\infty(t_1,t_2;\R^d)$. Using the definition of the Sobolev space $W^{1,q'}(t_1,t_2;\R^d)$ and the uniqueness of the weak derivative, in order for \eqref{eq:predirichlet} to hold, one must have that
$$\{|\dot{v}(t)|^{q-2}\dot{v}(t)w(t)\}'=0$$
for a.e. $t \in (t_1, t_2)$. Thus, the solution to the minimisation problem can be found as the solution to the Dirichlet problem
\begin{equation}\label{eq:thedirichlet}
\begin{cases}
\{|\dot{v}(t)|^{q-2}\dot{v}(t)w(t)\}'=0 & \text{a.e. on $(t_1, t_2)$},\\
v(t_1)=x_1, v(t_2)=x_2.
\end{cases}
\end{equation}
Here, the boundary conditions ensure the solution is an element of $\mathcal{A}$.

The equation $\{|\dot{v}(t)|^{q-2}\dot{v}(t)w(t)\}'=0$ is satisfied if and only if $$|\dot{v}(t)|^{q-2}\dot{v}(t)w(t) = A$$ for some $A \in \R^d$. Then, since $t$ is nonzero, the equation may be written as
\begin{equation}\label{eq:dirstep}
    |\dot{v}(t)|^{q-2}\dot{v}(t) = A(w(t))^{-1}.
\end{equation}
Consider the function $k(v) = |v|^{q-2}v$ for $v \in \R^d$ and let $k^{-1}(v) := |v|^{q'-2}v$. We see that $k^{-1}$ is the inverse of $k$ by checking $(k^{-1} \circ k)(v) = v$ for all $v \in \R^d$. We have that 
\begin{align*}
    (k^{-1} \circ k)(v) &= \big| |v|^{q-2}v\big|^{q'-2}|v|^{q-2}v\\
    &=|v|^{(q-2)(q'-2) + q + q' -4}v\\
    &=v
\end{align*}
since $(q-2)(q'-2) + q + q' -4=0$. Applying the function $k^{-1}$ to both sides of \eqref{eq:dirstep}, we have that
\begin{align*}
    \dot{v}(t) &= |A(w(t))^{-1}|^{q'-2}A\big(w(t)\big)^{-1}\\
    &= |A|^{q'-2}A \big(w(t)\big)^{1-q'}\\
    &=B\big(w(t)\big)^{\frac{1}{1-q}}
\end{align*}
where $B:=|A|^{q'-2}A$ is an element of $\R^d$. Let $W(t)$ denote an antiderivative of $\big(w(t)\big)^{\frac{1}{1-q}}$. Then the solution to the Dirichlet problem \eqref{eq:thedirichlet} is given by
\begin{equation}\label{eq:dirichletsol}
    v(t) = D + BW(t),
\end{equation}
where $B,D \in \R^d$ are given by
$$B = \frac{x_2 - x_1}{W(t_2) - W(t_1)}, \quad D = x_1 - BW(t_1).$$ We note that since $w(t)$ is continuous on $[t_1,t_2]$, $v(t)$ is continuously differentiable. Thus, $v(t)$ is a $C^1$ path connecting the points $(x_1,t_1)$ and $(x_2,t_2)$ in $\R^d \times (0,\infty)$, which will become relevant later.

We prove this solution is unique, which is equivalent to proving that the minimiser of $E$ is unique. According to Theorem \ref{convexminthm}, one may also arrive at this conclusion by proving the functional $E$ is strictly convex. However, we prove the uniqueness of solutions to the Dirichlet problem \eqref{eq:thedirichlet} as an alternate justification for this result.

Suppose $v, \tilde{v} \in \mathcal{A}$ are two solutions of \eqref{eq:thedirichlet} and let $y:=v-\tilde{v}$. Since $v$ and $\tilde{v}$ both satisfy the boundary conditions of this problem, it follows $y(t_1)=y(t_2)=0$ and therefore $y \in W^{1,q}_{w,0}(t_1,t_2;\R^d)$. Recall from \eqref{eq:predirichlet} that
$$\int_{t_1}^{t_2} |\dot{v}(t)|^{q-2}\dot{v}(t)\dot{h}(t) w(t) \ \mathrm{d}t = 0$$ for all $h \in W^{1,q}_{w,0}(t_1,t_2;\R^d)$. A similar statement can be obtained by replacing $v$ by $\tilde{v}$. Applying this for $h=y= v-\tilde{v}$, subtracting and factorising, we have that
\begin{equation}\label{eq:inttilde}
    \int_{t_1}^{t_2} \big(|\dot{v}(t)|^{q-2}\dot{v}(t) - |\dot{\tilde{v}}(t)|^{q-2}\dot{\tilde{v}}(t)\big)\big(\dot{v}(t) -\dot{\tilde{v}}(t)\big) w(t) \ \mathrm{d}t = 0.
\end{equation}
As shown in the proof of Proposition \ref{E is convex}, the function $f(x):=|x|^q$ is convex, and therefore its derivative $f'(x)=|x|^{q-2}x$ is a monotone function. Hence, $f'(x)$ satisfies the following monotonicity condition, which we write here in terms of $\dot{v}(t)$ and $\dot{\tilde{v}}(t)$ for all $t \in (t_1,t_2)$:
\begin{equation}\label{eq:mono}
    \big(|\dot{v}(t)|^{q-2}\dot{v}(t) - |\dot{\tilde{v}}(t)|^{q-2}\dot{\tilde{v}}(t)\big)\big(\dot{v}(t) -\dot{\tilde{v}}(t)\big) \geq 0
\end{equation}
Together, \eqref{eq:inttilde} and \eqref{eq:mono} imply that
$$\big(|\dot{v}(t)|^{q-2}\dot{v}(t) - |\dot{\tilde{v}}(t)|^{q-2}\dot{\tilde{v}}(t)\big)\big(\dot{v}(t) -\dot{\tilde{v}}(t)\big) =0 \quad \text{a.e. } t\in (t_1,t_2).$$
From this, we conclude that $\dot{v}(t) = \dot{\tilde{v}}(t)$ for a.e. $t \in (t_1,t_2)$ and therefore, $v(t) = \tilde{v}(t) +C$ for some constant $C \in \R^d$. Since $v$ and $\tilde{v}$ both satisfy the boundary conditions given in \eqref{eq:thedirichlet}, we must have that $C=0$. Therefore, $v(t)=\tilde{v}(t)$ for a.e. $t \in (t_1,t_2)$ and the solution to the problem given by \eqref{eq:dirichletsol} is unique. Thus, it follows that the point $v \in \mathcal{A}$ for which $E$ attains its minimum is unique. 

Finally, we use the unique solution $v$ of \eqref{eq:thedirichlet} to calculate that the minimum value of the functional $E$ on $\mathcal{A}$ is given by
\begin{align*}
    E(v) &= \frac{1}{q}\int_{t_1}^{t_2} |\dot{v}(t)|^q w(t) \ \mathrm{d}t\\
    &=\frac{|B|^q}{q} \int_{t_1}^{t_2} \big(w(t)\big)^{\frac{q}{1-q}} w(t) \ \mathrm{d}t\\
    &=\frac{|B|^q}{q}\int_{t_1}^{t_2} \big(w(t)\big)^{\frac{1}{1-q}} \ \mathrm{d}t\\
    &=\frac{|B|^q}{q}\big(W(t_2) - W(t_1) \big)
\end{align*}
This is equivalent to the result, which was claimed in Theorem \ref{generalised weight min}.
\end{proof}

Lastly, we consider a special case of Theorem \ref{generalised weight min}, corresponding to the unweighted functional $E(x):=\frac{1}{q}\int_{t_1}^{t_2} |\dot{x}(t)|^q \ \mathrm{d}t$. The following corollary may be deduced from Theorem \ref{generalised weight min} by setting $w(t)\equiv 1$.

\begin{corollary}\label{cor:unweighted}
Let $1<q<\infty$, $0<t_1<t_2$, and $x_1, x_2 \in \R^d$ and \mbox{$\mathcal{A}:=\{\xi_0\} \oplus W^{1,q}_{0}(t_1,t_2;\R^d)$}. Let $E:W^{1,q}(t_1,t_2;\R^d) \rightarrow \R$ be the functional defined by 
$$E(x):=\frac{1}{q}\int_{t_1}^{t_2} |\dot{x}(t)|^q \ \mathrm{d}t.$$ Then,
$$\min_{x \in \mathcal{A}} E(x) = \frac{|x_2-x_1|^q}{q (t_2 - t_1)^{q-1}}.$$
\end{corollary}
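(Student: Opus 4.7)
The plan is simply to recognise that Corollary \ref{cor:unweighted} is the direct specialisation of Theorem \ref{generalised weight min} to the constant weight $w(t)\equiv 1$, and to verify that under this choice the hypotheses of the theorem are satisfied and the conclusion reduces to the stated formula. So the strategy is a one-step reduction rather than a fresh argument.

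First, I would check the hypotheses. The constant function $w \equiv 1$ is continuous on $[t_1,t_2]$ and strictly positive, so it is an admissible weight. With this weight the weighted Lebesgue and Sobolev spaces coincide with the unweighted ones: $L^q_w(t_1,t_2;\R^d)=L^q(t_1,t_2;\R^d)$ and $W^{1,q}_w(t_1,t_2;\R^d)=W^{1,q}(t_1,t_2;\R^d)$ with equal norms. Hence $W^{1,q}_{w,0}(t_1,t_2;\R^d)=W^{1,q}_{0}(t_1,t_2;\R^d)$, so the affine space $\mathcal{A}=\{\xi_0\}\oplus W^{1,q}_{0}(t_1,t_2;\R^d)$ in the corollary matches the affine space in Theorem \ref{generalised weight min} for this weight. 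Finally, the functional $E(x)=\tfrac{1}{q}\int_{t_1}^{t_2}|\dot{x}(t)|^q\,\mathrm{d}t$ in the corollary equals the functional defined in \eqref{eq:general functional} at $w\equiv 1$.

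Next, I would compute the relevant antiderivative appearing in Theorem \ref{generalised weight min}. Since $\bigl(w(t)\bigr)^{\frac{1}{1-q}}=1^{\frac{1}{1-q}}=1$, one may take $W(t)=t$, so that
\begin{equation*}
W(t_2)-W(t_1)=t_2-t_1.
\end{equation*}
Substituting into the conclusion of Theorem \ref{generalised weight min} then gives
\begin{equation*}
\min_{x\in\mathcal{A}}E(x)=\frac{|x_2-x_1|^q}{q\bigl(W(t_2)-W(t_1)\bigr)^{q-1}}=\frac{|x_2-x_1|^q}{q(t_2-t_1)^{q-1}},
\end{equation*}
which is exactly the asserted identity.

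There is no real obstacle here: the only thing one must be a little careful about is that the spaces and norms genuinely collapse to the standard unweighted ones when $w\equiv 1$, which is immediate. The explicit minimiser is of course the straight-line path $v(t)=x_1+\tfrac{t-t_1}{t_2-t_1}(x_2-x_1)$, obtained by inserting $W(t)=t$ into the formula \eqref{eq:dirichletsol} from the proof of Theorem \ref{generalised weight min}; this could be noted in passing but is not required to establish the value of the minimum.
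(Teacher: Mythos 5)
Your proof is correct and follows exactly the route the paper indicates: the paper simply remarks that the corollary "may be deduced from Theorem \ref{generalised weight min} by setting $w(t)\equiv 1$," and your verification that the weight is admissible, that the weighted spaces collapse to the unweighted ones, and that $W(t)=t$ gives the stated value is precisely that reduction carried out. Nothing to add.
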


\chapter{General Harnack-type Inequalities}\label{sec:genharnack}

In this chapter, we prove a collection of Harnack inequality results discovered by Auchmuty and Bao \cite{Auchmuty-Bao-1994} for functions satisfying a gradient estimate of a particular form. Throughout, we let $\Omega \subseteq \R^d$ be an open convex set and write $\Omega_T:=\Omega \times (0,T)$.
\begin{proof}[Proof of Theorem \ref{thm:generalharnackineq}]
Let $f:\Omega_T \rightarrow (0,\infty)$ be continuously differentiable. Suppose there are  $C>0$, $p>1$, $r \in \R$ and $a \in L_{\loc}^1(0,T)$  such that $f$ satisfies the inequality
\begin{equation*}\tag{\ref{eq:thm1}}
    \frac{\partial f}{\partial t} + af \geq \frac{C|\nabla f|^p}{f^r} \qquad \text{in } \Omega_T.
\end{equation*}
We first aim to manipulate inequality \eqref{eq:thm1} in order to apply Young's inequality in the form
\begin{equation}\label{eq:young}
    \frac{1}{p}|Y|^p + \frac{1}{q}|Z|^q \geq Y \cdot Z
\end{equation}
for some vectors $Y, Z \in \R^d$. To do this, let $\phi: \Omega_T \rightarrow \R$ and $W: \Omega \rightarrow \R^d$ be continuous functions. By adding the quantity $-(\nabla f)\cdot W + \frac{1}{q}|\phi W|^q$ to both sides of \eqref{eq:thm1}, we have
\begin{equation}\label{eq:calculation1}
    \frac{\partial f}{\partial t} + af -(\nabla f)\cdot W + \frac{1}{q}|\phi W|^q\geq \frac{C|\nabla f|^p}{f^r}-(\nabla f)\cdot W + \frac{1}{q}|\phi W|^q.
\end{equation}
If one chooses $\phi = (\frac{f^r}{pC})^{\frac{1}{p}}$, then the right-hand side of \eqref{eq:calculation1} satisfies
\begin{align*}
    & \frac{C|\nabla f|^p}{f^r}-(\nabla f)\cdot W + \frac{1}{q}\left|\left(\frac{f^r}{pC}\right)^{\frac{1}{p}} W\right|^q\\
    &\hspace{2cm}=\frac{pC}{f^r} \left[\frac{1}{p}|\nabla f|^p-(\nabla f)\cdot \left(\frac{f^r}{pC}W\right) + \frac{1}{q}\left|\left(\frac{f^r}{pC}\right) W\right|^q\right],
\end{align*}
which is nonnegative due to Young's inequality \eqref{eq:young} applied to $Y=\nabla f$ and $Z=\left(\frac{f^r}{pC}\right)W$. Combining this with \eqref{eq:calculation1} then gives
$$\frac{\partial f}{\partial t} + af -(\nabla f)\cdot W + \frac{1}{q}\left|\left(\frac{f^r}{pC}\right)^{\frac{1}{p}} W\right|^q \geq 0.$$
By letting $\xi:=\frac{1}{q}\left(\frac{1}{pC}\right)^{q-1}$, we can rewrite the last inequality as 
\begin{equation}\label{eq:ineq}
    \frac{\partial f}{\partial t} + af -(\nabla f)\cdot W + \xi f^{\frac{r}{p-1}} \left|W\right|^q \geq 0.
\end{equation}

Now, let $(x_1,t_1), (x_2,t_2) \in \Omega_T$ be arbitrary for some $0<t_1<t_2<T$, and let $x(t)$ be a $C^1$ path in $\Omega$ such that $x(t_1) = x_1$ and $x(t_2) = x_2$, that is, the path $x(t)$ connects the points $x_1$ and $x_2$. By the chain rule, it follows that $$ \frac{\partial f}{\partial t}(x(t),t)=\frac{d}{dt} f(x(t),t) - (\nabla f(x(t),t))\cdot (\dot{x}(t)) $$ for all $t \in (t_1,t_2)$. Inserting this into \eqref{eq:ineq} yields that along the path $x(t)$, $f$ satisfies $$\frac{df}{dt} + a f - (\nabla f)\cdot \dot{x} - (\nabla f)\cdot W + \xi f^{\frac{r}{p-1}}|W|^q \geq 0.$$ Thus, we choose $W=-\dot{x}$ and set $m+1=\frac{r}{p-1}$ to simplify this inequality as 
\begin{equation}\label{eq:ineq2}
  \frac{df}{dt} + af \geq -\xi f^{m+1}|\dot{x}|^q.  
\end{equation}

We divide the remainder of the proof into three cases: (i) $m=0$, (ii) $m > 0$, and (iii) $m<0$. Throughout, $A(t):=\int^t a(s) \ ds$ will denote an antiderivative of $a \in L^1_{\loc}(0,T)$.

We begin by considering the case (i) $m = 0$. Then, \eqref{eq:ineq2} becomes $$\frac{df}{dt} + af \geq -\xi f |\dot{x}|^q.$$ Since $f$ is positive, we can rewrite this inequality as 
\begin{equation}\label{eq:calculation2}
    \frac{1}{f} \frac{df}{dt} + a \geq -\xi |\dot{x}|^q.
\end{equation}
Integrating \eqref{eq:calculation2} over $(t_1,t_2)$ yields that 
\begin{equation}\label{eq:ineq3}
    \log[f(x_2,t_2)] + A(t_2) + \xi\int_{t_1}^{t_2} |\dot{x}(t)|^q \ \mathrm{d}t \geq \log[f(x_1,t_2)] + A(t_1).
\end{equation}

Next, we optimise the estimate \eqref{eq:ineq3} among all curves connecting $x_1$ at $t_1$ and $x_2$ at $t_2$. According to Corollary \ref{cor:unweighted},  the minimum value of the integral $\int_{t_1}^{t_2} |\dot{x}(t)|^q \ \mathrm{d}t$ is given by $$\int_{t_1}^{t_2} \left|\frac{x_2-x_1}{t_2-t_1} \right|^q \ \mathrm{d}t = \frac{|x_2 - x_1|^q}{(t_2-t_1)^{q-1}}.$$
Applying this to \eqref{eq:ineq3}, we get that 
$$\log[f(x_2,t_2)] + A(t_2) + \xi\frac{|x_2 - x_1|^q}{(t_2-t_1)^{q-1}} \geq \log[f(x_1,t_2)] + A(t_1).$$ By exponentiating and rearranging the last inequality, we obtain 
$$f(x_2,t_2)\geq \frac{e^{A(t_1)}}{e^{A(t_2)}}f(x_1,t_1)e^{-\frac{\xi|x_2-x_1|^q}{(t_2-t_1)^{q-1}}},$$
which is, in fact, result (i) in Theorem \ref{thm:generalharnackineq}.

(ii) $m > 0$: We multiply \eqref{eq:ineq2} by the integrating factor $e^{A}$ to obtain $$\frac{d}{dt} (fe^A) \geq -\xi f^{m+1}e^A|\dot{x}|.$$
Since $f$ is positive, this can be rewritten as $$\frac{1}{(fe^A)^{m+1}}\frac{d}{dt}(fe^A) \geq -\xi e^{-mA}|\dot{x}|^q.$$
Since $m > 0$, it follows that
\begin{equation*}
    \frac{-m}{(fe^A)^{m+1}}\frac{d}{dt}(fe^A) \leq m\xi e^{-mA}|\dot{x}|^q.
\end{equation*}
By integrating over $(t_1,t_2)$, we obtain 
\begin{equation}\label{eq:ineq4}
    [f(x_2,t_2)e^{A(t_2)}]^{-m} \leq [f(x_1,t_1)e^{A(t_1)}]^{-m} + m\xi\int_{t_1}^{t_2} e^{-mA}|\dot{x}(t)|^q \ \mathrm{d}t.
\end{equation}

Since $A(t)$ is continuous as the antiderivative of a locally integrable function, the function $w(t):=e^{-mA(t)}$ is continuous. As well, observe that $w(t)>0$ for all $t \in [t_1,t_2]$. Hence, by Theorem \ref{generalised weight min}, the functional $$E(x):=\frac{1}{q}\int_{t_1}^{t_2} e^{-mA(t)}|\dot{x}(t)|^q \ \mathrm{d}t$$ has a minimum value of
\begin{equation*}
    \frac{1}{q}|x_2-x_1|^q\left( \displaystyle{\int_{t_1}^{t_2}} e^{m(p-1)A(t)} \ \mathrm{d}t \right)^{1-q} = \frac{1}{q}|x_2-x_1|^qI^{1-q},
\end{equation*}
where we define $I:=\int_{t_1}^{t_2} e^{m(p-1)A(t)} \ \mathrm{d}t$. Inserting this information into \eqref{eq:ineq4}, we have that 
\begin{align*}
    [f(x_2,t_2)e^{A(t_2)}]^{-m} &\leq [f(x_1,t_1)e^{A(t_1)}]^{-m} + m\xi|x_2-x_1|^qI^{1-q} \\
    f(x_2,t_2)e^{A(t_2)} &\geq \left\{[f(x_1,t_1)e^{A(t_1)}]^{-m} + m\xi|x_2-x_1|^qI^{1-q}\right\}^{\frac{-1}{m}} \\
    \begin{split}f(x_2,t_2)&\geq\frac{e^{A(t_1)}}{e^{A(t_2)}}f(x_1,t_1)\times \\ &\qquad (1+m\xi|x_2-x_1|^qI^{1-q}[f(x_1,t_1)]^m e^{mA(t_1)})^{\frac{-1}{m}},\end{split}
\end{align*}
which proves the claim of Theorem \ref{thm:generalharnackineq} in case (ii).

(iii) $m < 0$: The working in this case follows similarly to that in case (ii). We obtain that 
$$\frac{1}{(fe^A)^{m+1}}\frac{d}{dt}(fe^A) \geq -\xi e^{-mA}|\dot{x}|^q.$$ Using that $m<0$ and multiplying both sides of the above inequality by $-m$ yields that 
\begin{equation*}
    \frac{-m}{(fe^A)^{m+1}}\frac{d}{dt}(fe^A) \geq m\xi e^{-mA}|\dot{x}|^q.
\end{equation*}
By integrating and rearranging, 
\begin{align}
[f(x_1,t_1)e^{A(t_1)}]^{|m|} &\leq [f(x_2,t_2)e^{A(t_2)}]^{|m|} + |m|\xi\int_{t_1}^{t_2} e^{-mA}|\dot{x}(t)|^q \ \mathrm{d}t \label{eq:ineq5}
\end{align}
The integral in  inequality \eqref{eq:ineq5} is already known to have minimum value $|x_2-x_1|^qI^{1-q}$. Inserting this into \eqref{eq:ineq5} gives
$$[f(x_1,t_1)e^{A(t_1)}]^{|m|} \leq [f(x_2,t_2)e^{A(t_2)}]^{|m|} + |m| \, \xi \,|x_2-x_1|^qI^{1-q} .$$
This can be rearranged to give 
$$[f(x_2,t_2)]^{|m|}\geq \left( \frac{e^{A(t_1)}}{e^{A(t_2)}} \right)^{|m|} \left( [f(x_1,t_1)]^{|m|} - \frac{|m|\, \xi \,|x_2-x_1|^qI^{1-q}}{e^{|m|A(t_1)}} \right).$$
Since $f$ is positive, this can be rewritten as 
$$[f(x_2,t_2)]^{|m|}\geq \left( \frac{e^{A(t_1)}}{e^{A(t_2)}} \right)^{|m|}[f(x_1,t_1)]^{|m|} \left( 1 - \frac{|m|\, \xi \,|x_2-x_1|^qI^{1-q}}{[f(x_1,t_1)]^{|m|}e^{|m|A(t_1)}} \right).$$ If the quantity in the large parentheses is nonnegative, then we can take the $|m|^{th}$ root of both sides to obtain
$$f(x_2,t_2)\geq \frac{e^{A(t_1)}}{e^{A(t_2)}}f(x_1,t_1) \left( 1 - \frac{|m|\, \xi \,|x_2-x_1|^qI^{1-q}}{[f(x_1,t_1)]^{|m|}e^{|m|A(t_1)}} \right)^{\frac{1}{|m|}},$$
which concludes the proof in the case that $f$ is positive.

Finally, if $f$ is nonnegative and satisfies \eqref{eq:thm1} with $r=0$, an analogous argument can be used to demonstrate that $f$ satisfies $$\frac{d}{dt}(fe^A) \geq -\xi e^{A}|\dot{x}|^q.$$ Then inequality \eqref{eq:caseiiia} follows as in case (iii) with $m$ replaced by $-1$, which completes the proof.\end{proof}

Although the function $f$ was assumed to be continuously differentiable in Theorem \ref{thm:generalharnackineq}, in several applications, $f$ may only satisfy a weaker set of regularity assumptions. For instance, $f$ may be a weak solution to a particular parabolic equation. Thus, we state and prove a second version of this theorem, which is also due to Auchmuty and Bao \cite{Auchmuty-Bao-1994}. In particular, we will place the following weaker assumptions on the function $f$:
\begin{enumerate}[label={(F\arabic*):}]
    \item $f\in W^{1,1}_{\loc}(\Omega_T)$ is positive and continuous on $\Omega_T$;
    \item $f$ is absolutely continuous on every continuously differentiable curve $x:[t_1,t_2] \rightarrow \Omega$ where $0 < t_1 < t_2 < T$;
    \item the intersection of the set of points at which $f$ does not have a classical total derivative and the image of any absolutely continuous curve in $\Omega$ is a set of $\mathcal{H}^1$ measure zero.
\end{enumerate}
These assumptions are chosen carefully, so that the calculus results used in the argument of the proof of Theorem \ref{thm:generalharnackineq} remain valid. For more detail regarding the underlying theory, we direct the reader to Appendix \ref{sec:abs-cont}.

\begin{theorem}[General Harnack inequalities II, \cite{Auchmuty-Bao-1994}]\label{thm:gen-dist}
Let $\Omega \subseteq \R^d$ be an open convex set. Suppose a function $f$ defined on $\Omega_T$ satisfies assumptions (F1)--(F3) and that
the inequality  
    \begin{equation*}\tag{\ref{eq:thm1}}
    \frac{\partial f}{\partial t} + af \geq \frac{C|\nabla f|^p}{f^r} \qquad \text{a.e. in } \Omega_T
    \end{equation*}
    holds for some constants $C>0$, $p>1$, $r \in \R$ and for some function $a \in L^1_{\loc}(0,T)$, where $\frac{\partial f}{\partial t} \in L^1_{\loc}(\Omega_T)$ and $\nabla f \in (L^1_{\loc}(\Omega_T))^d$ are to be understood as a weak partial derivative and a weak gradient respectively.
Then conclusions (i)--(iii) of Theorem \ref{thm:generalharnackineq} hold for all $x_1,x_2 \in \Omega$ and $0 < t_1 < t_2 <T$.

Moreover, if $f \in W^{1,1}_{\loc}(\Omega_T)$ is nonnegative and continuous on $\Omega_T$, satisfies assumptions (F2) and (F3), and inequality \eqref{eq:thm1} holds with $r=0$, then inequality \eqref{eq:caseiiia} holds as before for all $x_1,x_2 \in \Omega$ and $0 < t_1 < t_2 <T$.
\end{theorem}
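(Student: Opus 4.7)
The plan is to repeat the argument used to establish Theorem \ref{thm:generalharnackineq} almost verbatim, with the assumptions (F1)--(F3) taking the place of the classical regularity that is no longer available. The first step of that proof---the algebraic manipulation of \eqref{eq:thm1} via Young's inequality to obtain the pointwise inequality \eqref{eq:ineq}---uses only elementary arithmetic, so it remains valid almost everywhere once $\partial_t f$ and $\nabla f$ are interpreted as weak derivatives in $L^1_{\loc}(\Omega_T)$.

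Next I would fix $(x_1,t_1),(x_2,t_2)\in\Omega_T$ and a $C^1$ path $x:[t_1,t_2]\to\Omega$ joining $x_1$ and $x_2$. The optimising paths supplied by Theorem \ref{generalised weight min} and Corollary \ref{cor:unweighted} take the form $D+BW(t)$ where $W$ is an antiderivative of a power of $e^{-mA}$; since $a\in L^1_{\loc}(0,T)$ makes $A$ continuous and the weight strictly positive, these optimisers are themselves $C^1$ and hence admissible. Assumption (F2) then guarantees that $t\mapsto f(x(t),t)$ is absolutely continuous on $[t_1,t_2]$, so the fundamental theorem of calculus applies, while (F3) guarantees that $f$ admits a classical total differential at $\mathcal{H}^1$-almost every point on the image of the curve. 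Together these yield the chain-rule identity
\[
\frac{d}{dt} f(x(t),t) \;=\; \frac{\partial f}{\partial t}(x(t),t) + \nabla f(x(t),t)\cdot\dot{x}(t)
\]
for almost every $t\in(t_1,t_2)$. Substituting this identity, together with the choice $W=-\dot{x}$, into the a.e.\ version of \eqref{eq:ineq} reproduces inequality \eqref{eq:ineq2} for a.e.\ $t$.

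From this point the proof closes along the three branches $m=0$, $m>0$, and $m<0$ exactly as in the proof of Theorem \ref{thm:generalharnackineq}. Each branch consists of multiplying by the integrating factor $e^{A}$ (where appropriate), rearranging, integrating in $t$---now justified by the absolute continuity of $t\mapsto f(x(t),t)$ rather than by the classical fundamental theorem---and minimising the resulting path integral of $|\dot x|^q$ against the appropriate weight by invoking Corollary \ref{cor:unweighted} in the case $m=0$ and Theorem \ref{generalised weight min} otherwise. The final assertion, in which $f$ is only nonnegative and $r=0$, requires no separate treatment: positivity of $f$ was only exploited when dividing through by $f$, which does not occur in the $r=0$ branch, so the computation proceeds with $m$ replaced by $-1$ as in the original argument.

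The step I expect to demand the most care is the justification of the chain-rule identity for the composition $f(x(t),t)$, since $f$ no longer possesses a classical gradient everywhere. This is precisely what hypotheses (F2) and (F3) are engineered to provide, in conjunction with the calculus-for-absolutely-continuous-functions results collected in Appendix \ref{sec:abs-cont}: (F2) supplies the absolute continuity needed to integrate, and (F3) ensures that the set of bad points on the path---where the chain rule could fail---has $\mathcal{H}^1$ measure zero. Once these tools are in hand, every remaining step of the proof is a routine transcription of the corresponding step in the proof of Theorem \ref{thm:generalharnackineq}.
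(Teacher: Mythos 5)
Your proposal is correct and follows essentially the same route as the paper: reduce to the proof of Theorem~\ref{thm:generalharnackineq} by replacing classical calculus with the calculus of absolutely continuous functions, using (F2)--(F3) together with the Marcus--Mizel chain rule (Theorem~\ref{thm:chain-abs}) to justify the identity $\tfrac{d}{dt}f(x(t),t) = \partial_t f + \nabla f\cdot\dot{x}$, and then integrate. One detail the paper handles more carefully than your sketch: you attribute the validity of the integration step to absolute continuity of $t\mapsto f(x(t),t)$, but what is actually differentiated and then recovered by the fundamental theorem of calculus is $\log f(x(t),t)$ in case (i) and $\bigl(f(x(t),t)\,e^{A(t)}\bigr)^{-m}$ in cases (ii)--(iii), so one additionally needs to verify that these compositions are absolutely continuous. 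The paper does this by noting that $A$ is absolutely continuous as a primitive of an integrable function (Proposition~\ref{prop:prim-abs}), that products of absolutely continuous functions are absolutely continuous (Proposition~\ref{prop:basic-abs}), and that postcomposition by a locally Lipschitz map such as $\log$, $\exp$, or $s\mapsto s^{-m}$ on an interval bounded away from zero preserves absolute continuity (Proposition~\ref{prop:lip-abs}); since $f>0$ and continuous on the compact image of the path, the relevant maps are indeed Lipschitz there. Adding this routine verification closes your argument completely.
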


\begin{proof}
The proof of this result closely follows the arguments presented in the proof of Theorem \ref{thm:generalharnackineq}, with the main change that the classical derivatives must now be replaced by weak derivatives. First assuming that $f$ is positive, we recall that the chain rule was previously used to write
\begin{equation}\label{eq:chain}
    \frac{\partial f}{\partial t}(x(t),t) = \frac{d}{dt}f(x(t),t) - (\nabla f(x(t),t))\cdot (\dot{x}(t))
\end{equation} for any continuously differentiable path $x:[t_1,t_2] \rightarrow \Omega$ connecting the points $(x_1,t_1), (x_2,t_2) \in \Omega_T$. Since $[t_1,t_2]$ is a compact subset of $(0,T)$ and by assumption $\frac{\partial f}{\partial t} \in L^1_{\loc}(0,T)$,  $\frac{\partial f}{\partial x_i} \in L^1_{\loc}(0,T)$ and $\frac{dx_i}{dt} \in C(t_1,t_2)$ for all $i=1,\ldots,d$, the quantity $(\nabla f) \cdot \dot{x} + \frac{\partial f}{\partial t}$ will satisfy $$\int_{t_1}^{t_2} \left| (\nabla f) \cdot \dot{x} + \frac{\partial f}{\partial t}\right| \ \mathrm{d}t < \infty.$$
Thus, in view of Theorem \ref{thm:ftc-abs}, this result, along with the assumptions (F2) and (F3), justifies the validity of the chain rule \eqref{eq:chain}. Therefore, we once again obtain  \begin{equation*}\tag{\ref{eq:ineq2}}\frac{df}{dt} + af \geq -\xi f^{m+1} |\dot{x}|^q\end{equation*} where $m+1 = \frac{r}{p-1}$.

As before, the proof can be divided into three cases (i) $m=0$, (ii) $m>0$, and (iii) $m<0$, which each involve integrating the inequality \eqref{eq:ineq2}. As an example, in case (i) we obtain $$\int_{t_1}^{t_2} \frac{d}{dt} (\log f(t)) \ \mathrm{d}t + A(t_2) + \xi\int_{t_1}^{t_2} |\dot{x}(t)|^q \ \mathrm{d}t \geq A(t_1).$$ In order to justify applying the fundamental theorem of calculus (Theorem \ref{thm:ftc-abs}) to the integral $\int_{t_1}^{t_2} \frac{d}{dt} (\log f(t)) \ \mathrm{d}t$, we must explain why the function $\log f $ is absolutely continuous on $[t_1,t_2]$. Indeed, the logarithm function is Lipschitz continuous on any interval $[\alpha,\beta]$, $0< \alpha <  \beta < \infty$ not containing zero. Since $f$ was assumed to be absolutely continuous on the image of any continuously differentiable curve, we conclude by Proposition \ref{prop:lip-abs} that $\log f$ is absolutely continuous on $[t_1,t_2]$ as the precomposition of an absolutely continuous function by a Lipschitz continuous function.

Similarly, during the proof in cases (ii) and (iii), we encounter the integral \begin{equation}\label{eq:integral}\int_{t_1}^{t_2} \frac{d}{dt} \Big((fe^{A(t)})^{-m} \Big) \ \mathrm{d}t\end{equation} for $m \ne 0$. The function $A(t)=\int^t a(s) \ \mathrm{d}s$ is absolutely continuous by Proposition \ref{prop:prim-abs} as the primitive of an integrable function. Since the exponential function is Lipschitz continuous on compact intervals, the composition $e^{A(t)}$ will be absolutely continuous on $[t_1,t_2]$. Then $fe^{A(t)}$ is absolutely continuous as the product of absolutely continuous functions. Finally, using that $fe^{A(t)}>0$ on $[t_1,t_2]$ and that the map $s \mapsto s^{-m}$ is Lipschitz continuous on any interval $[\alpha,\beta]$, $0<\alpha<\beta < \infty$ not containing zero, we conclude that $(fe^{A(t)})^{-m}$ is absolutely continuous on $[t_1,t_2]$. Thus, the fundamental theorem of calculus may be applied to the integral \eqref{eq:integral}.

Beyond this step, the remainder of the proof is unchanged, including the variational arguments used to improve the bound in the inequality.

Lastly, the result when $f$ is nonnegative and $r=0$ follows by the same reasoning. In particular, since $fe^{A(t)}$ is absolutely continuous, the fundamental theorem of calculus may be used to write $$\int_{t_1}^{t_2} \frac{d}{dt} (fe^{A(t)}) \ \mathrm{d}t = f(x(t_2),t_2)e^{A(t_2)} - f(x(t_1),t_1)e^{A(t_1)}.$$
\end{proof}

Finally, we state and prove a related result, which will be useful in applications.

\begin{theorem}[General Harnack inequalities III, \cite{Auchmuty-Bao-1994}]\label{thm:gen-backwards} Let $\Omega$ be an open convex set in $\R^d$. Suppose a function $f$ defined on $\Omega_T$ satisfies (F1)--(F3) and that the inequality \begin{equation}\label{eq:thm4}
    \frac{\partial f}{\partial t} + af \leq - \frac{C|\nabla f|^p}{f^r} \qquad \text{a.e. in } \Omega_T
\end{equation}
holds for some constants $C>0$, $p>1$, $r \in \R$ and for some function $a \in L^1_{\loc}(0,T)$, where $\frac{\partial f}{\partial t}$ and $\nabla f$ are interpreted in a weak sense. Then, for all $x_1,x_2 \in \Omega$ and $0 < t_1<t_2 < T$:
\begin{enumerate}
    \item if $r=p-1$, then
    \begin{equation*}\label{eq:thm4-casei}
    f(x_2,t_2)\leq \frac{e^{A(t_1)}}{e^{A(t_2)}}f(x_1,t_1)e^{\frac{\xi|x_2-x_1|^q}{(t_2-t_1)^{q-1}}};
    \end{equation*}
    \item if $r > p-1$, then
    \begin{equation}\label{eq:thm4-caseiia}
    \begin{split}
    [f(x_2,t_2)]^{-m}&\geq \frac{e^{mA(t_1)}}{e^{mA(t_2)}}\times \\&\qquad \Big(f(x_1,t_1)^{-m}-m\xi|x_2-x_1|^qI^{1-q} e^{mA(t_1)}\Big)^{\frac{-1}{m}}\end{split}
    \end{equation}
    and if the quantity inside the large parentheses is nonnegative, then
    \begin{equation*}\label{eq:thm4-caseiib}\begin{split}
    f(x_2,t_2)&\leq \frac{e^{A(t_1)}}{e^{A(t_2)}}f(x_1,t_1)\times \\&\qquad (1-m\xi|x_2-x_1|^qI^{1-q}[f(x_1,t_1)]^m e^{mA(t_1)})^{\frac{-1}{m}};\end{split}
    \end{equation*}
    \item if $r<p-1$, then
    \begin{equation*}\label{eq:thm4-caseiii}
    f(x_2,t_2)\leq \frac{e^{A(t_1)}}{e^{A(t_2)}}f(x_1,t_1) \left( 1 + \frac{|m|\, \xi \,|x_2-x_1|^qI^{1-q}}{[f(x_1,t_1)]^{|m|}e^{|m|A(t_1)}} \right)^{\frac{1}{|m|}},
    \end{equation*}
\end{enumerate}
where the quantities $q$, $m$, $\xi$, $A$, and $I$ are defined as in Theorem \ref{thm:generalharnackineq}.

Moreover, if $f$ is nonnegative on $\Omega_T$ and satisfies \eqref{eq:thm4} with $r=0$, then 
    \begin{equation*}\label{eq:thm4-nonneg}
    [f(x_2,t_2)]^{|m|}\leq \left( \frac{e^{A(t_1)}}{e^{A(t_2)}} \right)^{|m|} \left( [f(x_1,t_1)]^{|m|} + \frac{|m|\, \xi \,|x_2-x_1|^qI^{1-q}}{e^{|m|A(t_1)}} \right)
    \end{equation*}
for all $x_1,x_2 \in \Omega$ and $0 < t_1<t_2 < T$.
\end{theorem}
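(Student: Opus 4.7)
The plan is to run the argument of Theorem \ref{thm:gen-dist} essentially verbatim, with all inequalities reversed, exploiting the fact that the hypothesis \eqref{eq:thm4} is just \eqref{eq:thm1} multiplied through by $-1$. First I would manipulate \eqref{eq:thm4} into a pointwise-convenient form by adding the quantity $(\nabla f)\cdot W - \frac{1}{q}|\phi W|^q$ (rather than its negative) to both sides, where $W:\Omega\to\R^d$ is an auxiliary continuous vector field and $\phi=(f^r/(pC))^{1/p}$. Using Young's inequality $Y\cdot Z\leq \frac{1}{p}|Y|^p+\frac{1}{q}|Z|^q$ with $Y=\nabla f$ and $Z=(f^r/(pC))W$ shows that the residue on the right-hand side is non-positive, which yields
\begin{equation*}
    \frac{\partial f}{\partial t}+af+(\nabla f)\cdot W \leq \xi f^{r/(p-1)}|W|^q \qquad\text{a.e. in }\Omega_T,
\end{equation*}
with $\xi=\frac{1}{q}(\frac{1}{pC})^{q-1}$ as before.

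Next I would pick a $C^1$ path $x:[t_1,t_2]\to\Omega$ with $x(t_i)=x_i$ and use the chain rule $\frac{\partial f}{\partial t}(x(t),t)=\frac{d}{dt}f(x(t),t)-(\nabla f)\cdot\dot{x}$, whose validity along $x(\cdot)$ rests on assumptions (F1)--(F3) together with the absolute-continuity / fundamental-theorem-of-calculus discussion invoked in the proof of Theorem \ref{thm:gen-dist}. Choosing $W=\dot{x}$ cancels the gradient term and, writing $m+1=r/(p-1)$, produces
\begin{equation*}
    \frac{df}{dt}+af \leq \xi f^{m+1}|\dot{x}|^q.
\end{equation*}
This is precisely inequality \eqref{eq:ineq2} from the proof of Theorem \ref{thm:generalharnackineq} but with the reversed sense, and from here I would split into the three cases $m=0$, $m>0$, $m<0$ as in that proof.

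In each case the recipe is identical to Theorem \ref{thm:generalharnackineq}: multiply by the integrating factor $e^A$ and divide by an appropriate power of $fe^A$ (legal since $f>0$), then integrate in $t$ over $[t_1,t_2]$ using the fundamental theorem of calculus (justified by the absolute continuity of $\log(fe^A)$ or $(fe^A)^{-m}$, exactly as in the proof of Theorem \ref{thm:gen-dist}). For example, when $m=0$ one obtains $\log f(x_2,t_2)-\log f(x_1,t_1)+A(t_2)-A(t_1)\leq \xi\int_{t_1}^{t_2}|\dot{x}|^q\,dt$, and when $m\neq 0$ one arrives at
\begin{equation*}
    [f(x_2,t_2)e^{A(t_2)}]^{-m}\geq [f(x_1,t_1)e^{A(t_1)}]^{-m} - m\xi\int_{t_1}^{t_2}e^{-mA(t)}|\dot{x}(t)|^q\,dt.
\end{equation*}
To obtain the sharpest possible upper bound on $f(x_2,t_2)$ I would then minimise the weighted integral $\int_{t_1}^{t_2}e^{-mA(t)}|\dot{x}(t)|^q\,dt$ over all admissible paths via Theorem \ref{generalised weight min} with $w(t)=e^{-mA(t)}$ (and via Corollary \ref{cor:unweighted} when $m=0$); since $w^{1/(1-q)}=e^{m(p-1)A}$, the resulting minimum is $|x_2-x_1|^q I^{1-q}$, giving exactly the quantity $I$ appearing in the statement. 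Substituting this minimum, rearranging, and (in case (iii) or when the bracketed term in case (ii) is nonnegative) taking an appropriate root yields each of the asserted upper bounds. The nonnegative-$f$, $r=0$ case is handled analogously with $m$ replaced by $-1$ and with the step of dividing by a power of $f$ replaced by directly differentiating $fe^A$.

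The main obstacle will be bookkeeping of inequality signs: every time one multiplies by a quantity like $-m$ (whose sign depends on the case), divides by a power of $f$, raises both sides to a negative power $-1/m$, or swaps the roles of upper and lower bounds produced by the variational step, the direction of the inequality must be tracked carefully. A secondary delicate point is justifying in each case that the functions to which the fundamental theorem of calculus is applied — namely $\log f(x(t),t)$ and $(f(x(t),t)e^{A(t)})^{\pm m}$ — are genuinely absolutely continuous on $[t_1,t_2]$, which reduces (as in the proof of Theorem \ref{thm:gen-dist}) to composing the absolutely continuous function $fe^A$ with maps that are Lipschitz on the range of $fe^A$, a range that is bounded away from $0$ and $\infty$ by continuity and positivity of $f$ on the compact curve.
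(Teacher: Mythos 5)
Your proposal is correct and follows the same route as the paper: add the Young-inequality-dual quantity $(\nabla f)\cdot W-\tfrac{1}{q}|\phi W|^q$ to reduce \eqref{eq:thm4} to $\tfrac{df}{dt}+af\le\xi f^{m+1}|\dot{x}|^q$ along a path with $W=\dot{x}$, invoke the absolute-continuity/chain-rule justifications from Theorem \ref{thm:gen-dist}, and then integrate and optimise over paths via Theorem \ref{generalised weight min} with $w=e^{-mA}$, tracking the reversed inequality signs case by case. You also correctly flag the one genuinely new subtlety that the paper highlights, namely that in case (ii) the $-m$-th root can only be taken when the bracketed quantity is nonnegative.
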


\begin{proof}
The proof once again follows from the arguments of Theorems \ref{thm:generalharnackineq} and \ref{thm:gen-dist}, so we summarise the main changes. We first add the quantity $(\nabla f) \cdot W - \tfrac{1}{q}|\phi W|^q$ to both sides of the inequality \eqref{eq:thm4}, where as before, $\phi = \left (\frac{f^r}{pC}\right)^{1/p}$ and $W$ is a continuous vector field on $\Omega$. By a similar argument to that used to prove Theorem \ref{thm:generalharnackineq}, the right-hand side of the resulting inequality will be nonpositive a.e. in $\Omega_T$. Thus, we obtain \begin{equation}\label{eq:calculation3}\frac{\partial f}{\partial t} + af + (\nabla f)\cdot W - \xi f^{\frac{r}{p-1}}|W|^q \leq 0.\end{equation}

As per the previous proofs, let $x$ be a continuously differentiable curve in $\Omega$ defined on $[t_1,t_2]$ with $0<t_1<t_2 < T$. By choosing $W=\dot{x}$ in \eqref{eq:calculation3} we reach $$\frac{\partial f}{\partial t} + (\nabla f) \cdot \dot{x} + af \leq \xi f^{m+1}|\dot{x}|^q,$$ where again we have let $m+1 = \frac{r}{p-1}$. As justified in the proof of Theorem \ref{thm:gen-dist}, we may apply the chain rule to obtain 
$$\frac{df}{dt} + af \leq \xi f^{m+1}|\dot{x}|^q.$$

The remainder of the proof follows the arguments of Theorem \ref{thm:generalharnackineq} with the inequality signs reversed and $-\xi$ replaced by $\xi$. In addition, the fundamental theorem of calculus may be applied as in the proof of Theorem \ref{thm:gen-dist}. As well, the variational arguments used to improve the bounds in the integrals still apply.

We also note one difference in the final form of Theorem \ref{thm:gen-backwards} compared with Theorems \ref{thm:generalharnackineq} and \ref{thm:gen-dist}. We see that in the case (ii) $m>0$, we may now only take the $-m^{\text{th}}$ root of both sides of inequality \eqref{eq:thm4-caseiia} if the quantity in large parentheses is nonnegative. This was not an issue in the corresponding result in Theorems \ref{thm:generalharnackineq} and \ref{thm:gen-dist}, since here, both terms in the parentheses were guaranteed to be positive. However, this does not remain true after exchanging $-\xi$ and $\xi$.

Finally, a similar argument applies to prove the result in the case when $f$ is nonnegative and $r=0$.

\end{proof}

\chapter{Applications to Nonlinear Evolution Equations}\label{sec:applications}

In this chapter, we apply the theorems from Chapter \ref{sec:genharnack} to obtain Harnack inequality results pertaining to the three significant examples of parabolic equations introduced in Chapter \ref{sec:intro}. Throughout, we work on the domain $\R^d_T:=\R^d \times (0,T)$.

\section{The Heat Equation}

We now demonstrate the calculation used in the proof of Theorem \ref{thm:generalharnackineq} to verify the Harnack inequality \eqref{eq:heatineq} derived by Li and Yau \cite{LiYau1986} for positive solutions $u$ to the heat equation  $$u_t = \Delta u \qquad \text{ in } \R^d_T.$$ In \cite{LiYau1986} the gradient estimate 
\begin{equation*}\tag{\ref{eq:liyauestimate}}\frac{|\nabla u|^2}{u^2} - \frac{u_t}{u} \leq \frac{d}{2t} \qquad \text{in } \R^d_T\end{equation*} was derived. By rearranging this inequality, we have that
\begin{equation}\label{eq:estimaterearranged}
    \frac{\partial u}{\partial t} + \frac{d}{2t}u \geq \frac{|\nabla u|^2}{u},
\end{equation}
which is of the form \eqref{eq:thm1} with $a(t) = \frac{d}{2t}$, $C=1$, $p=2$, and $r=1$. Since $r=p-1$, case (i) of the proof is relevant to this calculation. 

As before, let $\phi: \R^d_T\rightarrow \R$ and $W: \R^d \rightarrow \R^d$ be continuous functions. Then, we obtain from \eqref{eq:estimaterearranged} that
$$\frac{\partial u}{\partial t} + \frac{d}{2t}u - (\nabla u) \cdot W + \frac{1}{2} |\phi W|^2 \geq \frac{|\nabla u|^2}{u}- (\nabla u) \cdot W + \frac{1}{2} |\phi W|^2.$$
Choosing $\phi = (\frac{u}{2})^{1/2}$, the right-hand side of the last inequality can be expressed as 
\begin{equation}\label{eq:preyoung}
   \frac{2}{u} \left[ \frac{|\nabla u|^2}{2}-(\nabla u)\cdot\left(\frac{u}{2} W\right) + \frac{1}{2} \left(\frac{u}{2}\right)^2|W|^2 \right]. 
\end{equation}
Young's inequality \eqref{eq:young} with $p=q=2$, $Y=\nabla u$, and $Z = (\frac{u}{2})W$ yields that 
$$\frac{|\nabla u|^2}{2} + \frac{1}{2}\left(\frac{u}{2}\right)^2|W|^2 \geq (\nabla u) \cdot \left(\frac{u}{2} W\right).$$
Using this result and the nonnegativity of $u$, we conclude that the quantity \eqref{eq:preyoung} is nonnegative and hence
\begin{equation}\label{eq:nextinequality}
    \frac{\partial u}{\partial t} + \frac{d}{2t}u - (\nabla u) \cdot W + \frac{u}{4}|W|^2 \geq 0.
\end{equation}

Now, let $(x_1,t_1), (x_2,t_2) \in \R^d_T$ be arbitrary and let $x(t)$ be a $C^1$ path in $\R^d$ with $x(t_1) =x_1$ and $x(t_2) =x_2$. By the chain rule,
$$\frac{d}{dt}u(x(t),t) = (\nabla u) \cdot (\dot{x}(t)) + \frac{\partial u}{\partial t}.$$
Inserting this result into \eqref{eq:nextinequality} with $W=-\dot{x}$, we have that
\begin{align*}
    \frac{du}{dt} + \frac{d}{2t}u + \frac{u}{4}|\dot{x}|^2 &\geq 0\\
    \frac{1}{u}\frac{du}{dt} + \frac{d}{2t} &\geq -\frac{1}{4}|\dot{x}|^2.
\end{align*}
Integrating with respect to $t$ from $t_1$ to $t_2$ produces
$$\log(u(x_2,t_2)) + \frac{d}{2}\log\left(\frac{t_2}{t_1}\right) \geq -\frac{1}{4}\int_{t_1}^{t_2}|\dot{x}(t)|^2 \ \mathrm{d}t + \log(u(x_1,t_1)).$$
We optimise this inequality by minimising the integral $\frac{1}{2}\int_{t_1}^{t_2}|\dot{x}(t)|^2 \ \mathrm{d}t$. This minimisation problem is of the form described in Corollary \ref{cor:unweighted} with $q=2$. Therefore, the integral has minimum value 
$$\frac{|x_2-x_1|^2}{2(t_2-t_1
)}.$$
Hence, 
$$\log(u(x_2,t_2)) + \frac{d}{2}\log\left(\frac{t_2}{t_1}\right) \geq -\frac{|x_2 - x_1|^2}{4(t_2 - t_1)}+ \log(u(x_1,t_1)).$$
By exponentiating and rearranging, we obtain the inequality of Li and Yau,
$$u(x_2,t_2) \geq u(x_1,t_1) \left(\frac{t_1}{t_2} \right)^{d/2} e^{-\tfrac{|x_2-x_1|^2}{4(t_2-t_1)}},$$ which holds for all $x_1,x_2 \in \R^d$ and $0 < t_1 < t_2 < T$.

\section{The Porous Medium Equation}\label{PME}
Let $M > M_0(d):=\max\{ 0, 1-\frac{2}{d}\}$ and let $u$ be a positive solution of the porous medium equation,
\begin{equation*}
\frac{\partial u}{\partial t} = \Delta(u^M) \qquad \text{in } \R^d_T.
\end{equation*}
Considering first the case $M>1$, we make the change of dependent variable, $$
f := (\tfrac{M}{M-1})u^{M-1}.
$$ Then, $$\frac{\partial f}{\partial t} = \frac{df}{du} \frac{\partial u}{\partial t} = \frac{df}{du}\Delta(u^M).$$ By writing \begin{align*}
    \Delta(u^M) &= Mu^{M-1}\Delta u + M(M-1)u^{M-2}|\nabla u|^2\\ &= (M-1)f\Delta u + (M-1)\frac{df}{du}|\nabla u|^2
\end{align*}
we have that 
\begin{align*}
    \frac{\partial f}{\partial t} &= (M-1)f\frac{df}{du}\Delta u + (M-1)\left(\frac{df}{du}\right)^2|\nabla u|^2\\
    &= (M-1)f\frac{df}{du}\Delta u + (M-1)|\nabla f|^2.
\end{align*}
Then, using $\frac{df}{du}\Delta u= \Delta f - \frac{d^2f}{du^2}|\nabla u|^2$, it follows that
$$\frac{\partial f}{\partial t} = (M-1)f\Delta f + (M-1)|\nabla f|^2 - (M-1) f \frac{d^2 f}{du^2}|\nabla u|^2.$$
It is easily verified that $$ (M-1) f \frac{d^2 f}{du^2}|\nabla u|^2 =  (M-2)M^2u^{2M-4}|\nabla u|^2.$$ Recognising that $M^2u^{2M-4}|\nabla u|^2 =\left(\frac{df}{du}\right)^2|\nabla u|^2 =  |\nabla f|^2$ and simplifying, we arrive at the equation satisfied by $f$, which is
\begin{equation}
\frac{\partial f}{\partial t} = (M-1)f\Delta f + |\nabla f|^2. \label{eq:PMEf}
\end{equation}
These calculations are justified in a rigorous sense, since positive solutions of the porous medium equation with positive and continuous initial data are known to be smooth \cite{VazquezPME}.

To proceed further, we require the following significant and crucial inequality proven by Aronson and B\'enilan \cite{Aronson-Benilan-1979}. \begin{lemma}\label{lem:ab}
Let $M > M_0(d):=\max\{0,1-\frac{2}{d}\}$ and $u$ be a positive solution of the porous medium equation,
\begin{equation*}
\frac{\partial u}{\partial t} = \Delta(u^M) \qquad \text{in } \R^d_T.
\end{equation*} Define $$f=\begin{cases}\frac{M}{M-1} u^{M-1} & M \ne 1,\\ \log u & M=1. \end{cases}$$
Then \begin{equation}\label{eq:ab}
    \Delta f \geq \frac{-k}{t}, \qquad k:= \frac{1}{(M-1 + \frac{2}{d})}.
\end{equation}
\end{lemma}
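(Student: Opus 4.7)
The plan is to reduce the inequality to a differential inequality for $P := \Delta f$ and then apply a parabolic maximum-principle argument. I first consider the case $M \neq 1$, so that $f = \tfrac{M}{M-1}u^{M-1}$. Since positive solutions of the porous medium equation with positive smooth data are smooth \cite{VazquezPME}, classical derivatives of $f$ are available, and I may work with \eqref{eq:PMEf}, namely $f_t = (M-1)f\Delta f + |\nabla f|^2$.

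Taking the Laplacian of \eqref{eq:PMEf}, I would use the two Bochner-type identities
$$\Delta(f\Delta f) = f\Delta P + 2\nabla f \cdot \nabla P + P^2, \qquad \Delta(|\nabla f|^2) = 2|D^2 f|^2 + 2\nabla f \cdot \nabla P,$$
to derive the evolution equation
$$P_t = (M-1)f\Delta P + 2M\nabla f \cdot \nabla P + (M-1)P^2 + 2|D^2 f|^2.$$
The pointwise Cauchy--Schwarz inequality $|D^2 f|^2 \geq (\Delta f)^2/d = P^2/d$ then yields
$$P_t \geq (M-1)f\Delta P + 2M \nabla f \cdot \nabla P + \tfrac{1}{k}P^2,$$
where $k = \big(M-1+\tfrac{2}{d}\big)^{-1}$. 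Note that the hypothesis $M>M_0(d)$ is exactly what guarantees $k>0$ (so that $1/k>0$ and the quadratic term has the right sign).

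Setting $Q := -P$ and $\psi(x,t) := Q(x,t) - k/t$, my aim is to show $\psi \leq 0$ on $\R^d_T$ via a parabolic maximum principle. If $\psi$ attained a positive interior maximum at some $(x_0,t_0)$, then at that point one would have $\nabla Q = 0$, $\Delta Q \leq 0$, and $Q_t \geq -k/t_0^2$. The inequality derived above, rewritten for $Q$, reads $Q_t \leq (M-1)f\Delta Q + 2M\nabla f \cdot \nabla Q - Q^2/k$, which at $(x_0,t_0)$ forces $-k/t_0^2 \leq -Q(x_0,t_0)^2/k$, i.e.\ $Q(x_0,t_0) \leq k/t_0$, contradicting $\psi(x_0,t_0)>0$.

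The main obstacle is making this maximum-principle step rigorous on the unbounded domain $\R^d_T$, where $\psi$ need not attain its supremum. To handle this I would employ the standard device of adding a spatial penalty of the form $\varepsilon |x|^2 / (T-t)$ and working on large balls $B_R(0) \times (\tau,T)$, showing the perturbed function must attain its maximum in the interior (since the penalty blows up as $|x|\to\infty$ and as $t\to T$), deriving the contradiction at that point, and then letting $\varepsilon \to 0$, $R\to\infty$, and $\tau\to 0$. Once $Q \leq k/t$ is established pointwise, this translates back to $\Delta f \geq -k/t$, as claimed. Finally, the case $M=1$ (where $f = \log u$) follows either by the analogous computation, noting that $f$ then satisfies $f_t = \Delta f + |\nabla f|^2$, so that the evolution of $P$ becomes $P_t = \Delta P + 2\nabla f \cdot \nabla P + 2|D^2 f|^2 \geq \Delta P + 2\nabla f \cdot \nabla P + \tfrac{2}{d}P^2$, and the same maximum-principle argument yields the constant $k = d/2$ consistent with the formula in \eqref{eq:ab}.
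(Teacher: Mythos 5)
The paper does not actually prove this lemma; it is stated as a cited result of Aronson and B\'enilan \cite{Aronson-Benilan-1979} and used as a black box, so there is no ``paper's proof'' for you to compare against. What you have written is a sketch of the classical Aronson--B\'enilan argument, and the algebraic core of it is correct: the computation $P_t = (M-1)f\Delta P + 2M\,\nabla f\cdot\nabla P + (M-1)P^2 + 2|D^2f|^2$ follows from \eqref{eq:PMEf} and the flat Bochner identity, the pointwise bound $|D^2f|^2 \geq P^2/d$ is right, the coefficient of $P^2$ collapses to $1/k = M-1+\tfrac{2}{d}$, and the condition $M>M_0(d)$ is precisely what makes this coefficient positive. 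The sign of the second-order coefficient $(M-1)f = Mu^{M-1} > 0$ is also fine, so the first-order conditions at a putative maximum give the desired contradiction.

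The genuine gap is in the maximum-principle step on the unbounded domain $\R^d\times(0,T)$, and the penalty device you propose does not work out of the box here. The parabolic operator $L\varphi := (M-1)f\,\Delta\varphi + 2M\,\nabla f\cdot\nabla\varphi$ has variable coefficients that are neither bounded nor nondegenerate a priori: $(M-1)f = Mu^{M-1}$ can vanish (for $M>1$, as $u\to 0$) or be unbounded (for $M<1$, as $u\to 0$), and the drift $2M\,\nabla f$ need not be controlled at spatial infinity. Applying $L$ to the penalty $\varepsilon|x|^2/(T-t)$ produces a term $\propto\, \varepsilon\,\nabla f\cdot x/(T-t)$ that you cannot dominate without further growth hypotheses on $u$. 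Moreover, without a priori information that $Q=-\Delta f$ grows sub-quadratically in $|x|$, you cannot guarantee the perturbed function attains an interior maximum at all. This is exactly why Aronson and B\'enilan (and later V\'azquez \cite{VazquezPME}) do not run a direct barrier argument on $\R^d$: they prove the pointwise inequality first for a regularised or compactly supported approximating family (for which the maximum principle is applicable and everything is bounded), and then pass to the limit using compactness and the stability of the estimate. Your sketch identifies the right differential inequality, but the passage from the formal computation to the estimate for general positive solutions is the substantial part of the proof and requires that approximation structure rather than the penalty you describe.
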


By multiplying \eqref{eq:ab} by $(M-1)f$, we get that $$
(M-1)f\Delta f \geq - \frac{k(M-1)}{t}f.
$$ Then by \eqref{eq:PMEf}, \begin{equation}
\frac{\partial f}{\partial t}+\frac{(M-1)k}{t}f \geq |\nabla f|^2.\label{eq:inequ}
\end{equation}

Let $\mu:=(M-1)k$ and $\delta:=1-\mu$. Inequality \eqref{eq:inequ} is of the form \eqref{eq:thm1} with $a(t)=\frac{\mu}{t}$, $C=1$, $p=2$, $r=0$. With the goal of applying Theorem \ref{thm:generalharnackineq}, we calculate the remaining quantities appearing in this theorem as
\begin{equation}\label{eq:quantities}q=2, \quad m=-1, \quad \xi=\frac{1}{4}, \quad A(t)=\log(t^{\mu}), \quad I=\frac{t_2^{\delta}-t_1^{\delta}}{\delta}.\end{equation}
Inserting this information into inequality \eqref{eq:caseiiia}, we obtain that $f$ satisfies \begin{equation*}
f(x_2,t_2) \geq \left( \frac{t_1}{t_2} \right)^{\mu}\left[f(x_1,t_1) - \frac{\delta|x_2-x_1|^2}{4(t_2^{\delta} - t_1^{\delta})}\frac{1}{t_1^{\mu}}\right]
\end{equation*} for all $x_1,x_2 \in \R^d$ and $0<t_1<t_2<T$. Finally, writing this in terms of the original variable $u$, we arrive at the resulting Harnack inequality,
\begin{equation}\label{eq:harnackPME}
    [u(x_2,t_2)]^{M-1} \geq \left( \frac{t_1}{t_2} \right)^{\mu}\left[[u(x_1,t_1)]^{M-1} - \frac{M-1}{M}\frac{\delta|x_2-x_1|^2}{4(t_2^{\delta} - t_1^{\delta})}\frac{1}{t_1^{\mu}}\right].
\end{equation}

Next, we treat the case $M_0(d)<M<1$. Our previous definition of $f=\frac{M}{M-1}u^{M-1}$ is no longer appropriate, since the coefficient $\frac{M}{M-1}$ and hence also $f$ are now strictly negative. Thus, Theorem \ref{thm:generalharnackineq} cannot apply. Instead, define $$g:=\frac{M}{1-M}u^{M-1},$$ which will be nonnegative. By a similar process to that conducted above, $g$ satisfies the equation \begin{equation}\label{eq:g-equation}
    \frac{\partial g}{\partial t} = (1-M)g\Delta g - |\nabla g|^2 \qquad \text{in } \R^d_T.
\end{equation}
Observe that $g=-f$ and recall the inequality of Aronson and B\'enilan \cite{Aronson-Benilan-1979}, $\Delta f \geq -\frac{k}{t}$. Using the homogeneity of the Laplace operator and rearranging the inequality, we have that \begin{equation}\label{eq:g-ab}\Delta g \leq \frac{k}{t},\end{equation} where $k$ is defined as in Lemma \ref{lem:ab}. Multiplying \eqref{eq:g-ab} by $(1-M)g \geq 0$ and inserting the result into \eqref{eq:g-equation}, we have that $$\frac{\partial g}{\partial t} + \frac{(M-1)k}{t} g \leq -|\nabla g|^2.$$ This inequality is of the form \eqref{eq:thm4} with $a(t) = \frac{(M-1)k}{t}$, $C=1$, $p=2$, and $r=0$. We find that the remaining quantities required to apply Theorem \ref{thm:gen-backwards} coincide with those given in \eqref{eq:quantities} from the case $M>1$. Applying this theorem, we obtain that $g$ satisfies $$g(x_2,t_2) \leq \left( \frac{t_1}{t_2} \right)^{\mu}\left[g(x_1,t_1) + \frac{\delta|x_2-x_1|^2}{4(t_2^{\delta} - t_1^{\delta})}\frac{1}{t_1^{\mu}}\right]$$ for all $x_1,x_2 \in \R^d$ and $0<t_1<t_2<T$. Writing this in terms of $u$, we have $$[u(x_2,t_2)]^{M-1} \leq \left( \frac{t_1}{t_2} \right)^{\mu}\left[[u(x_1,t_1)]^{M-1} - \frac{M-1}{M}\frac{\delta|x_2-x_1|^2}{4(t_2^{\delta} - t_1^{\delta})}\frac{1}{t_1^{\mu}}\right].$$ Since both sides of the above inequality will be strictly positive, we may take their reciprocals in order to obtain a lower bound for $[u(x_2,t_2)]^{1-M}$. We prefer this form of the inequality to better show the analogy with \eqref{eq:harnackPME}. Doing this, we obtain
$$[u(x_2,t_2)]^{1-M} \geq \left( \frac{t_2}{t_1} \right)^{\mu}\left[[u(x_1,t_1)]^{M-1} - \frac{M-1}{M}\frac{\delta|x_2-x_1|^2}{4(t_2^{\delta} - t_1^{\delta})}\frac{1}{t_1^{\mu}}\right]^{-1},$$ for all $x_1,x_2 \in \R^d$ and $0<t_1<t_2<T$, which is the result claimed in Theorem \ref{thm:pme}.

%---------------------------------
%   p-diffusion equation
%---------------------------------

\section{The p-diffusion Equation}\label{sec:pdiff}
Let $\frac{2d}{d+1}< p < \infty$ and suppose that $u$ is a positive weak solution of 
the $p$-diffusion equation,
\begin{equation*}
    \frac{\partial u}{\partial t} = \divergence(|\nabla u|^{p-2}\nabla u), \qquad \text {in } \R^d_T.
\end{equation*}
By this, we mean that $u \in W^{1,\infty}_{\loc}((0,T),L^2_{\loc}(\R^d)) \cap L^p_{\loc}((0,T), W^{1,p}_{\loc}(\R^d))$  satisfies 
\begin{equation}\label{eq:p-diff-weak}
    -\int_0^T \int_{\R^d} \Big( u \frac{\partial \varphi}{\partial t} - |\nabla u|^{p-2}\nabla u \nabla \varphi \Big) \ \mathrm{d}x \, \mathrm{d}t = 0
\end{equation}
for all test functions $\varphi \in C_c^{\infty}(\R^d_T)$. We aim to find a Harnack inequality satisfied by $u$.

For $p > 2$, let
\begin{equation*}
    f:=\frac{1}{\gamma}u^{\gamma}, \quad \gamma:= \frac{p-2}{p-1}.
\end{equation*}
Proceeding formally, we have that 
\begin{align*}
    \frac{\partial f}{\partial t} &= \frac{df}{du} \frac{\partial u}{\partial t}\\
    &=\frac{df}{du} \divergence(|\nabla u|^{p-2}\nabla u)\\
    &= u^{\gamma} (u^{-1}\divergence(|\nabla u|^{p-2}\nabla u)).
\end{align*}
Then, we see that
\begin{align*}
    & \phantom{= \ } \divergence(|\nabla f|^{p-2} \nabla f)\\
    &= \divergence\left(\left(\frac{df}{du}\right)^{p-1}|\nabla u|^{p-2} \nabla u\right) \\
    &= \sum_{i=1}^d \frac{\partial}{\partial x_i} \left[ \left( \frac{df}{du}\right)^{p-1} |\nabla u|^{p-2} \frac{\partial u}{\partial x_i}\right] \\
    &= \sum_{i=1}^d \left( \frac{df}{du}\right)^{p-1} \frac{\partial}{\partial x_i}\left(|\nabla u|^{p-2} \frac{\partial u}{\partial x_i}\right) + \sum_{i=1}^d \frac{\partial}{\partial x_i} \left[ \left( \frac{df}{du}\right)^{p-1}\right] |\nabla u|^{p-2} \frac{\partial u}{\partial x_i} \\
    &= \left( \frac{df}{du}\right)^{p-1} \divergence(|\nabla u|^{p-2} \nabla u) + |\nabla u|^{p-2}\sum_{i=1}^d \left( \frac{\partial u}{\partial x_i}\right)^2 (p-1) \left(\frac{df}{du}\right)^{p-2}\frac{d^2f}{du^2}\\
    &= \left( \frac{df}{du}\right)^{p-1} \divergence(|\nabla u|^{p-2} \nabla u) + |\nabla u|^p (p-1) \left(\frac{df}{du}\right)^{p-2}\frac{d^2f}{du^2} \\
    &= u^{-1} \divergence(|\nabla u|^{p-2} \nabla u) - u^{-2}|\nabla u|^p 
\end{align*}
Hence, we have
$$\frac{\partial f}{\partial t}= u^{\gamma}\divergence(|\nabla f|^{p-2}\nabla f)+u^{\gamma-2}|\nabla u|^p.$$
Noticing that $u^{\gamma} = \gamma f$ and $$|\nabla f|^p = u^{(\gamma -1)p}|\nabla u|^p=u^{\gamma -2}|\nabla u|^p,$$ we have that the equation satisfied by $f$ is
\begin{equation}
    \frac{\partial f}{\partial t} = \gamma f \divergence(|\nabla f|^{p-2}\nabla f) + |\nabla f|^p. \label{eq:newpdiff}
\end{equation}

Now, a weak solution $u$ of the $p$-diffusion equation  only belongs to the space $C^{1,\alpha}(\R^d_T)$ in general \cite{DiBenedetto-Friedman-1985}. A definition of this space may be found in Chapter \ref{sec:hoelder}. Thus, the above computations are merely formal. In order to study this problem rigorously, we must understand what is meant by a weak solution of \eqref{eq:newpdiff}. To do this, we propose the following definition.

\begin{definition}\label{def:our-def-f}
Let $1 < p < \infty$ and $\gamma \in \R$. Then, we call a function $f \in L^{\infty}_{\loc}(\R^d_T) \cap L^{p}_{\loc}((0,T), W^{1,p}_{\loc}(\R^d))$ a positive weak solution of  \begin{equation*}\tag{\ref{eq:newpdiff}}\frac{\partial f}{\partial f} = \gamma f \divergence(|\nabla f|^{p-2} \nabla f) + |\nabla f|^p \qquad \text{in } \R^d_T\end{equation*} if $f>0$ and $f$ satisfies 
\begin{align}\label{eq:fweakint}\begin{split}
&\int_0^T \int_{\R^d} f \frac{\partial \varphi}{\partial t} \ \mathrm{d}x \, \mathrm{d}t - \gamma \int_0^T \int_{\R^d} f|\nabla f|^{p-2}\nabla f \nabla \varphi \ \mathrm{d}x \, \mathrm{d}t \\ & \hspace{3.5cm}+  (1-\gamma)\int_0^T \int_{\R^d} |\nabla f|^p \varphi \ \mathrm{d}x \, \mathrm{d}t =0\end{split}
\end{align} for all $\varphi \in C_c^{\infty}(\R^d_T)$.
\end{definition}

To demonstrate that this notion of a solution is sensible, we show that for any positive weak solution $u$ of the $p$-diffusion equation, $f=\frac{1}{\gamma} u^{\gamma}$ with $\gamma=\frac{p-2}{p-1}$ satisfies our definition. To achieve this, we require some properties of weak solutions to the $p$-diffusion equation, which we summarise without proof in the proposition below.

\begin{proposition}\label{prop:pdiffprops} Let $u$ be a positive weak solution of the $p$-diffusion equation. Suppose that $u_0:=u(\cdot,0) \in L^1(\R^d)$. Then, the following properties hold.

\begin{enumerate}
    \item (Existence of weak time derivative, \cite{Barbu}): $u$ is weakly differentiable with respect to time and $$\frac{\partial u}{\partial t} \in L^2_{\loc}(\R^d_T);$$
    \item (Gradient regularity), \cite{Alikakos}): One has that $$\nabla u(t) \in (L^{\infty}(\R^d))^d$$ for all $t > 0$;
    \item ($L^1$-$L^{\infty}$ estimate, \cite{Coulhon-Hauer}): There exists a constant $C>0$ such that $$\|u(t)\|_{\infty} \leq Ct^{-\alpha_1}\|u_0\|_1^{\gamma_1}$$ for all $t >0$, where $\alpha_1$ and $\gamma_1$ are constants depending on $d$ and $p$. Thus, $u(t) \in L^{\infty}(\R^d)$ for all $t>0$.
\end{enumerate}
\end{proposition}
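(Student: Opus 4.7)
The plan is to treat the three items as standard regularity and smoothing results for the parabolic $p$-Laplace equation, adapting the proofs from the cited references. The common theme is to exploit the gradient-flow structure $u_t = \Delta_p u$ together with either energy estimates against well-chosen test functions or Moser-type iteration. In each case the formal calculation is short, and the real work lies in justifying the chosen test function despite the low a priori regularity; the standard device is Steklov averaging in time together with a vanishing-viscosity regularization, for example replacing $|\nabla u|^{p-2}$ by $(\varepsilon + |\nabla u|^2)^{(p-2)/2}$, which produces smooth approximate solutions to which the formal computation applies before a limit passage. A further useful reduction is to start the argument at an arbitrarily small time $t_0 > 0$: since $u \in L^p_{\loc}((0,T), W^{1,p}_{\loc}(\R^d))$, for a.e.\ $t_0$ the datum $u(\cdot, t_0)$ has enough regularity to run an energy argument.

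For (i), I would begin from the formal identity obtained by testing the equation with $u_t$ itself, namely
\begin{equation*}
\frac{d}{dt}\frac{1}{p}\int_{\R^d} |\nabla u|^p\,\mathrm{d}x = -\int_{\R^d} |u_t|^2\,\mathrm{d}x,
\end{equation*}
and establish it on the regularization. To localize in space-time, I would test against $\eta^2 u_t$ for a cut-off $\eta \in C_c^\infty(\R^d_T)$; the extra boundary term involving $|\nabla u|^{p-1}|\nabla \eta|\,|u_t|$ is absorbed by Young's inequality and yields the claimed $L^2_{\loc}$ bound on $u_t$.

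For (ii), the strategy is a parabolic Moser iteration applied to $v := |\nabla u|^2$. Differentiating the equation in $x_k$ produces a divergence-form parabolic equation for $\partial_k u$ whose coefficients depend measurably on $|\nabla u|^{p-2}$. A parabolic Caccioppoli-type inequality for high powers of $v$ together with Sobolev embedding then permits the classical iteration of $L^q$ norms up to $L^\infty$. The hard step is handling the degeneracy (or singularity in the range $1 < p < 2$) of the coefficient $|\nabla u|^{p-2}$ where $\nabla u$ vanishes or blows up; this is precisely where Alikakos's estimates enter, ensuring the iteration is uniform in the regularization parameter and that the resulting bound is global in space, not merely local.

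For (iii), I would run Moser iteration directly on the equation. Testing with $u^{q-1}$ for $q \geq 2$ gives, after integration by parts and the substitution $v = u^{(q+p-2)/p}$,
\begin{equation*}
\frac{1}{q}\frac{d}{dt}\int_{\R^d} u^q\,\mathrm{d}x + (q-1)\left(\frac{p}{q+p-2}\right)^p \int_{\R^d} |\nabla v|^p\,\mathrm{d}x = 0.
\end{equation*}
Combining this with the Gagliardo--Nirenberg--Sobolev inequality produces a differential inequality for $\|u(t)\|_q^q$ in terms of itself and $\|u_0\|_1$, which iterated along a geometric sequence of exponents $q_n \to \infty$ gives, after careful tracking of constants, the claimed bound $\|u(t)\|_\infty \leq C t^{-\alpha_1}\|u_0\|_1^{\gamma_1}$. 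The exponents $\alpha_1 = d/(d(p-2)+p)$ and $\gamma_1 = p/(d(p-2)+p)$ are forced by the scaling invariance $u(x,t) \mapsto A\,u(Bx,\, A^{p-2}B^p t)$, and they are finite precisely under the standing assumption $p > 2d/(d+1)$. The main obstacle across all three items is the same: justifying the formal test-function manipulations despite the roughness of the weak solution, which is handled uniformly by Steklov averaging or vanishing-viscosity approximation and subsequent limit passage.
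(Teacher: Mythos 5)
The paper does not prove this proposition at all: immediately before the statement it says ``we require some properties of weak solutions to the $p$-diffusion equation, which we summarise \emph{without proof} in the proposition below,'' and each item is simply attributed to the cited reference (\cite{Barbu} for the weak time derivative via abstract gradient-flow theory of maximal monotone operators, \cite{Alikakos} for the global gradient bound, \cite{Coulhon-Hauer} for the $L^1$--$L^\infty$ smoothing estimate). So there is no paper proof to compare against, only citations.

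That said, your sketch is a sound outline of how these facts are actually established in the cited literature, so it genuinely supplies something the paper omits. Your energy identity in (i), $\tfrac{d}{dt}\tfrac1p\int|\nabla u|^p=-\int|u_t|^2$, is precisely the concrete form of the abstract subdifferential gradient-flow estimate invoked from \cite{Barbu}; the localisation via $\eta^2u_t$ plus Steklov averaging is the standard way to make it rigorous. Your exponents in (iii), $\alpha_1=d/\kappa$ and $\gamma_1=p/\kappa$ with $\kappa=d(p-2)+p$, are correct and match both the scaling computation and the requirement $\kappa>0\iff p>\tfrac{2d}{d+1}$, which is the paper's standing assumption; the paper itself leaves them unspecified. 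The weakest part is (ii): you correctly identify the strategy (differentiate in $x_k$, treat $v=|\nabla u|^2$ as a subsolution of a linear divergence-form parabolic equation with coefficients depending on $|\nabla u|^{p-2}$, iterate), but the passage from local Caccioppoli estimates to the \emph{global} bound $\nabla u(t)\in (L^\infty(\R^d))^d$ and the handling of the degeneracy of $|\nabla u|^{p-2}$ are exactly the delicate points that \cite{Alikakos} resolves, and you defer them to that reference rather than resolving them. This is acceptable for a sketch, but be aware it is precisely where the hard work lies, and the resulting quantitative bound (a decay estimate in $t$, not merely finiteness) is also what the cited result actually provides.
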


First, we note that since $u>0$, for every $K \subset \subset \R^d$, there exists a constant $c_{K} >0$ such that $u \geq c_{K}$ a.e. on $K$. Then, since the $L^1$-$L^{\infty}$ estimate (iii) from Proposition \ref{prop:pdiffprops} implies that $u(t) \in L^{\infty}(\R^d)$ for all $t>0$, it follows that $f=\frac{1}{\gamma} u^{\gamma} \in L^{\infty}_{\loc}(\R^d_T)$. Similarly, we have that \mbox{$u(t)^{\gamma-1} \in L^{\infty}(\R^d)$} for all $t>0$. Combining this with $\nabla u (t) \in (L^{\infty}(\R^d))^d$ from (ii) in Proposition \ref{prop:pdiffprops}, we see that $$\nabla f(t) = u(t)^{\gamma-1}\nabla u(t) \in (L^{\infty}_{\loc}(\R^d))^d \hookrightarrow (L^{p}_{\loc}(\R^d))^d$$ for all $t>0$. Thus, $f \in L^{\infty}_{\loc}(\R^d_T) \cap L^{p}_{\loc}((0,T), W^{1,p}_{\loc}(\R^d))$.

Now, in order to write $\nabla f =\nabla(\frac{1}{\gamma}u^{\gamma}) = u^{\gamma-1}\nabla u$ in the step above, we required the chain rule. To justify this, we first observe that for any $K \subset \subset \R^d_T$, there exist constants $0< c_K \leq c'_{K} < \infty$ such that $c_K \leq u \leq c'_K$ a.e. on $K$. Then, the function $G(s):=\frac{1}{\gamma}s^{\gamma}$ is continuously differentiable with bounded derivative $G'$ on the interval $[c_K,c'_{K}]$ for all $\gamma \in \R$. Extending $G$ to all $\R$ in such a way that $G(0)=0$, the continuous differentiability of $G$ is preserved, and the boundedness of $G'$ is preserved, we may conclude by Proposition 9.5 in \cite{Brezis} that $$\frac{\partial}{\partial t}(\tfrac{1}{\gamma}u^{\gamma}) = u^{\gamma-1}\frac{\partial u}{\partial t}, \qquad \nabla (\tfrac{1}{\gamma}u^{\gamma}) = u^{\gamma-1}\nabla u.$$
%%%%%%%%%%%%%%%%%

Next, we fix $\varphi \in C_c^{\infty}(\R^d_T)$ and insert $f=\frac{1}{\gamma}u^{\gamma}$ into the right-hand side of \eqref{eq:fweakint} to obtain
\begin{align*}
    &\int_0^T \int_{\R^d} f \frac{\partial \varphi}{\partial t} \ \mathrm{d}x \, \mathrm{d}t - \gamma \int_0^T \int_{\R^d} f|\nabla f|^{p-2}\nabla f \nabla \varphi \ \mathrm{d}x \, \mathrm{d}t  \\ & \hspace{1cm}+ (1-\gamma)\int_0^T \int_{\R^d} |\nabla f|^p \varphi \ \mathrm{d}x \, \mathrm{d}t \\
&= \frac{1}{\gamma}\int_0^T \int_{\R^d} u^{\gamma} \frac{\partial \varphi}{\partial t} \ \mathrm{d}x \, \mathrm{d}t - \gamma \int_0^T \int_{\R^d} u^{\gamma-1}|\nabla u|^{p-2}\nabla u \nabla \varphi \ \mathrm{d}x \, \mathrm{d}t  \\ & \hspace{1cm}- (\gamma-1)\int_0^T \int_{\R^d} u^{\gamma-2}|\nabla u|^p \varphi \ \mathrm{d}x \, \mathrm{d}t \\
&=\frac{1}{\gamma}\int_0^T \int_{\R^d} u^{\gamma}\frac{\partial \varphi}{\partial t} \ \mathrm{d}x \, \mathrm{d}t - \int_0^T \int_{\R^d} |\nabla u|^{p-2}\nabla u \nabla(u^{\gamma-1}\varphi) \ \mathrm{d}x \, \mathrm{d}t,
\end{align*}
where here we have used the chain rule and the product rule to obtain the last equality. The chain rule is justified as before. To justify the product rule, we let $\omega:=\supp \varphi \subset \subset \R^d_T$ and observe that $u^{\gamma-1}, \varphi \in L^{\infty}(\omega) \cap W^{1,p}(\omega)$. Then, by Proposition 9.4 in \cite{Brezis}, $$\nabla (u^{\gamma-1}\varphi) = \nabla(u^{\gamma-1})\varphi + u^{\gamma-1}\nabla \varphi.$$ Since all the terms in the above equation are identically zero on $\R^d_T \setminus \omega$, the product rule is still valid when we extend the domain of these functions from $\omega$ to $\R^d_T$.

Using the chain rule and integrating by parts twice, we rewrite the following integral.
\begin{align*}\frac{1}{\gamma}\int_0^T \int_{\R^d} u^{\gamma}\frac{\partial \varphi}{\partial t} \ \mathrm{d}x \, \mathrm{d}t &= -\int_0^T \int_{\R^d} \frac{\partial u}{\partial t} u^{\gamma-1} \varphi \ \mathrm{d}x \, \mathrm{d}t \\&= \int_0^T \int_{\R^d} u \frac{\partial}{\partial t} (u^{\gamma-1} \varphi)\ \mathrm{d}x \, \mathrm{d}t.\end{align*} Here, we have also used the existence of $\frac{\partial u}{\partial t}$ in the weak sense (Proposition \ref{prop:pdiffprops}, (i)) to write the integral in the intermediate step. Thus, the expression obtained from the right-hand side of \eqref{eq:fweakint} is equal to
\begin{equation}\label{eq:the result}
    \int_0^T \int_{\R^d} u \frac{\partial}{\partial t} (u^{\gamma-1} \varphi) \ \mathrm{d}x \, \mathrm{d}t - \int_0^T \int_{\R^d} |\nabla u|^{p-2}\nabla u \nabla(u^{\gamma-1}\varphi) \ \mathrm{d}x \, \mathrm{d}t.
\end{equation}
It now remains to show that the expression \eqref{eq:the result} is equal to zero. Set $\psi:=u^{\gamma-1}\varphi$ and write \eqref{eq:the result} as 
\begin{equation}\label{eq:the psi one}
    \int_{\omega} u \frac{\partial \psi}{\partial t}  \ \mathrm{d}(x,t) -  \int_{\omega} |\nabla u|^{p-2}\nabla u \nabla \psi \ \mathrm{d}(x,t).
\end{equation}
Since we know from earlier that $u^{\gamma-1} \in L^{\infty}_{\loc}(\R^d_T)$ and $\varphi \in L^{\infty}(\R^d_T)$, we have that $\psi = u^{\gamma-1}\varphi \in L^{\infty}(\omega) \subseteq L^1(\omega)$. As well, using the product rule and chain rule, $$\frac{\partial \psi}{\partial t} = (\gamma-1)u^{\gamma-2}\frac{\partial u}{\partial t}\varphi + u^{\gamma-1}\frac{\partial \varphi}{\partial t}.$$ The functions $u^{\gamma-2}$ and $u^{\gamma-1}$ both belong to $L^{\infty}_{\loc}(\R^d_T)$, $\frac{\partial u}{\partial t} \in L^2_{\loc}(\R^d_T)$ by (i) in Proposition \ref{prop:pdiffprops}, and $\varphi, \frac{\partial \varphi}{\partial t} \in L^{\infty}(\R^d_T)$. Thus, $\frac{\partial \psi}{\partial t} \in L^2(\omega) \subseteq L^1(\omega)$. Similarly, $$\nabla \psi = (\gamma-1)u^{\gamma-2}(\nabla u)\varphi + u^{\gamma-1}\nabla \varphi.$$ Repeating the previous argument, instead using that $\nabla u \in L^{\infty}_{\loc}(\R^d_T)$ by (ii) in Proposition \ref{prop:pdiffprops}, we have that $\nabla \psi \in (L^{\infty}(\omega))^d \subseteq (L^1(\omega))^d$. Thus, $\psi \in W^{1,1}(\omega)$. However, since $\psi\equiv 0$ on $\R^d_T \setminus \omega$, we may conclude $$\psi \in W^{1,1}_0(\R^d_T):=\overline{C_c^{\infty}(\R^d_T)}^{\| \cdot \|_{W^{1,1}}}.$$
Hence, there exists a sequence $(\psi_n)_{n \geq 1}$ in $C_c^{\infty}(\R^d_T)$ such that $\psi_n \rightarrow \psi$ in $W^{1,1}(\R^d_T)$. This implies that $$\frac{\partial \psi_n}{\partial t} \rightarrow \frac{\partial \psi}{\partial t} \text{ in $L^1(\R^d_T),$}\qquad \nabla \psi_n \rightarrow \nabla \psi \text{ in $(L^1(\R^d_T))^d$.}$$ Additionally, since $\omega \subseteq \R^d_T$, these convergences occur in $L^1(\omega)$ and $(L^1(\omega))^d$ respectively as well.

Now, since $\psi_n \in C_c^{\infty}(\R^d_T)$, the definition \eqref{eq:p-diff-weak} of $u$ as a weak solution of the $p$-diffusion equation implies
$$\int_0^T \int_{\R^d}  u \frac{\partial \psi_n}{\partial t} \ \mathrm{d}x \, \mathrm{d}t- \int_0^T \int_{\R^d} |\nabla u|^{p-2}\nabla u \nabla \psi_n \ \mathrm{d}x \, \mathrm{d}t = 0$$
for all $n\geq 1$. Since $\frac{\partial \psi}{\partial t}, \nabla \psi_n$ are both identically zero on $\R^d_T \setminus \omega$, we may rewrite this as
$$\int_{\omega}  u \frac{\partial \psi_n}{\partial t} \ \mathrm{d}(x,t)- \int_{\omega} |\nabla u|^{p-2}\nabla u \nabla \psi_n \ \mathrm{d}(x,t) = 0$$ for all $n \geq 1$. Since $u \in L^{\infty}(\omega)$ and $|\nabla u|^{p-2}\nabla u \in (L^{\infty}(\omega))^d$, we may use the duality $L^{\infty}(\omega) \cong (L^1(\omega))'$ to write $$\langle u, \frac{\partial \psi_n}{\partial t}\rangle_{L^{\infty},L^1} - \langle |\nabla u|^{p-2}\nabla u, \nabla \psi_n \rangle_{L^{\infty},L^1}=0.$$ Taking a limit as $n \rightarrow \infty$ in both sides, we obtain $$\langle u, \frac{\partial \psi}{\partial t}\rangle_{L^{\infty},L^1} - \langle |\nabla u|^{p-2}\nabla u, \nabla \psi \rangle_{L^{\infty},L^1}=0,$$ which is equivalent to \eqref{eq:the psi one} being equal to zero. This concludes the proof of our claim that $f=\frac{1}{\gamma}u^{\gamma}$ satisfies Definition \ref{def:our-def-f}.

\begin{remark}
In existing literature, for example, the work of Esteban and V\'azquez \cite{Esteban-Vazquez-1988, Esteban-Vasquez-1990}, a weak solution of \eqref{eq:newpdiff} has been understood as the limit as $\varepsilon \rightarrow 0$  of a sequence of solutions to the regularised equation \begin{equation}\label{eq:regular}
    \frac{\partial f}{\partial t}  = \gamma f \divergence(\varphi_{\varepsilon} (\nabla f) \nabla f) + \psi_{\varepsilon}(\nabla f)
\end{equation}
where $\varphi_{\varepsilon}$ and $\psi_{\varepsilon}$ are smooth approximations of the nonlinearities in the original equation. The solutions of \eqref{eq:regular} enjoy the property of being smooth, however this property is not in general transferred to the solutions of \eqref{eq:newpdiff} obtained in the limit. For our purposes, we require a more explicit understanding of the regularity properties of the solutions, hence, we introduce Definition \ref{def:our-def-f}.
\end{remark}

We now return our attention to deriving a Harnack inequality for $u$. Following Aronson and B\'enilan's discovery of inequality \eqref{eq:ab} involving solutions of the porous medium equation, Esteban and V\'azquez \cite{Esteban-Vasquez-1990} found an analogous inequality in the case of the $p$-diffusion equation. This result is essential to our proof and we state it in the lemma below.

\begin{lemma}\label{lem:estvaz}
Let $\frac{2d}{d+1}< p < \infty$ and let $u$ be a positive solution of the p-diffusion equation,
\begin{equation*}\frac{\partial u}{\partial t} = \divergence(|\nabla u|^{p-2}\nabla u) \qquad \text{in }\R^d_T.\end{equation*} Define $$f=\begin{cases}\frac{1}{\gamma}u^{\gamma} & p \ne 2, \\
\log u & p=2\end{cases}$$ where $\gamma:=\frac{p-2}{p-1}$. Then, the inequality
\begin{equation}\label{eq:estvaz}
    \divergence(|\nabla f|^{p-2}\nabla f) \geq -\frac{K}{t} \qquad \text{in }\mathscr{D}'(\R^d_T)
\end{equation} holds. 
\end{lemma}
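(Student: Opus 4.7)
The plan is to adapt the method of Aronson and B\'enilan for the porous medium equation to the $p$-Laplace setting, following Esteban and V\'azquez. Since a positive weak solution $u$ of the $p$-diffusion equation enjoys only $C^{1,\alpha}$ regularity, the computations must be carried out on smooth approximations and then passed to the limit. Concretely, I would regularise the $p$-diffusion equation by replacing $|\nabla u|^{p-2}$ with $(\varepsilon + |\nabla u|^{2})^{(p-2)/2}$ and work with a sequence of smooth, strictly positive approximate initial data; classical uniformly parabolic theory then provides smooth strictly positive solutions $u_{\varepsilon}$ of the regularised problem.

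For each $u_{\varepsilon}$ set $f_{\varepsilon} := \tfrac{1}{\gamma} u_{\varepsilon}^{\gamma}$ and let $P_{\varepsilon}$ denote the corresponding regularised $p$-Laplacian applied to $f_{\varepsilon}$; by the formal computation preceding Definition \ref{def:our-def-f} (now rigorous in the smooth regularised setting), $f_{\varepsilon}$ satisfies a regularised version of \eqref{eq:newpdiff}. The heart of the proof is to derive a Riccati-type differential inequality
\begin{equation*}
    \frac{\partial P_{\varepsilon}}{\partial t} \geq \mathcal{L}_{f_{\varepsilon}} P_{\varepsilon} + \frac{1}{K}\, P_{\varepsilon}^{2},
\end{equation*}
where $\mathcal{L}_{f_{\varepsilon}}$ is a second order linear operator (the formal linearisation of the regularised $p$-Laplacian at $f_{\varepsilon}$, together with first order transport terms) and $K = K(d,p)$ is an explicit positive constant. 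Obtaining this inequality is the technical core: differentiate the equation for $f_{\varepsilon}$ in space, commute the regularised $p$-Laplacian past spatial derivatives via a Bochner-type identity, and discard non-negative remainder terms by invoking the algebraic inequality $(\trace A)^{2} \leq d\, \trace(A^{2})$ applied to the Hessian of $f_{\varepsilon}$; this last step pins down the value of $K$.

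Once the Riccati inequality is in hand, the comparison with the barrier $\psi(t) := -K/t$ is straightforward: since $\psi$ depends only on $t$, one has $\mathcal{L}_{f_{\varepsilon}} \psi = 0$, and $\psi'(t) = K/t^{2} = \tfrac{1}{K}\psi(t)^{2}$, so $\psi$ saturates the inequality. A standard parabolic comparison argument on $\R^{d} \times (\tau, T)$, letting $\tau \downarrow 0$, then yields $P_{\varepsilon}(x,t) \geq -K/t$ pointwise and uniformly in $\varepsilon$. Finally, I would pass to the limit $\varepsilon \to 0$ in the weak formulation of this bound against non-negative test functions; the hypothesis $p > \tfrac{2d}{d+1}$ supplies the integrability of $\nabla u_{\varepsilon}$ needed to justify this limit and to recover $\divergence(|\nabla f|^{p-2}\nabla f) \geq -K/t$ in $\mathscr{D}'(\R^{d}_{T})$.

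The main obstacle is the derivation of the Riccati inequality for $P_{\varepsilon}$: the nonlinearity of the (regularised) $p$-Laplacian makes commuting it with spatial derivatives considerably more delicate than in the semilinear porous-medium analogue, and the algebraic cancellations leading to a sign-definite quadratic lower bound must be tracked with care. A secondary subtlety is that the barrier $-K/t$ is singular at $t = 0$, which forces the use of $(\tau, T)$ with $\tau \downarrow 0$ together with a separate argument controlling the initial behaviour of $P_{\varepsilon}$ so that the comparison principle applies.
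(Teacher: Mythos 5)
The paper does not give a proof of Lemma~\ref{lem:estvaz}: it is quoted as a known result of Esteban and V\'azquez \cite{Esteban-Vasquez-1990}, with no argument reproduced in the text, so there is no internal proof for your proposal to be compared against. Your outline (regularise the equation, derive a Riccati-type inequality for $P_{\varepsilon}=\Delta_p f_{\varepsilon}$, compare with the barrier $-K/t$, pass $\varepsilon \to 0$) does capture the Aronson--B\'enilan strategy that Esteban and V\'azquez adapted, so the skeleton is sound.

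Two details, however, need correction. First, the role you assign to $p>\tfrac{2d}{d+1}$ is misplaced: this hypothesis is not chiefly an integrability condition for $\nabla u_{\varepsilon}$, but is what makes the constant $K=K(d,p)$ multiplying the quadratic term in the Riccati inequality strictly positive. It is the exact analogue of the porous-medium condition $M>1-\tfrac{2}{d}$ ensuring $k=(M-1+\tfrac{2}{d})^{-1}>0$; without it the quadratic term has the wrong sign and the barrier argument does not close, which is precisely the failure discussed in Remark~\ref{rem:pdiffsubcritical}. Second, the algebraic inequality $(\trace A)^2 \le d\,\trace(A^2)$ you cite is the correct Cauchy--Schwarz estimate for the Laplacian (where $\Delta v=\trace D^2 v$), but it does not directly apply to the $p$-Laplacian. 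Writing $\Delta_p f = |\nabla f|^{p-2}\trace\bigl((I+(p-2)\nu\nu^{T})D^2 f\bigr)$ with $\nu=\nabla f/|\nabla f|$, the relevant quantity is a weighted trace, and the Cauchy--Schwarz estimate carries the normaliser $\trace(I+(p-2)\nu\nu^{T})=d+p-2$ rather than $d$; this weighted version is what produces the explicit $K$ and identifies the supercritical range. A smaller but non-negligible gap: the comparison argument on $\R^{d}\times(\tau,T)$ is posed on an unbounded domain, so a Phragm\'en--Lindel\"of-type growth condition at spatial infinity is required for the parabolic comparison principle to be valid; this needs to be supplied from the decay of the regularised solutions.
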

In \eqref{eq:estvaz}, we write $\mathscr{D}'(\R^d_T)$ in order to emphasise that this inequality holds in the sense of distributions. By this, we mean that we interpret $T=\divergence(|\nabla f|^{p-2}\nabla f)$ to be a distribution given by $$\langle T,\varphi \rangle_{\mathscr{D}',\mathscr{D}} = -\int_0^T \int_{\R^d} |\nabla f|^{p-2}\nabla f \nabla \varphi \ \mathrm{d}x \, \mathrm{d}t$$ for all test functions $\varphi \in \mathscr{D}(\R^d_T)$. For a brief explanation of the fundamental ideas in the theory of distributions, we refer the reader to Appendix \ref{sec:distributions}.

Multiplying \eqref{eq:estvaz} by $\gamma f$, gives $$\gamma f\divergence(|\nabla f|^{p-2}\nabla f) \geq\left(\frac{\gamma  K}{t}\right)f \qquad \text{ in }\mathscr{D}'(\R^d_T)$$
Using this result with the equation \eqref{eq:newpdiff} yields that $$\frac{\partial f}{\partial t} - |\nabla f|^p \geq -\left(\frac{\gamma  K}{t}\right) f \qquad \text{in }\mathscr{D}'(\R^d_T)$$ or equivalently, \begin{equation}\frac{\partial f}{\partial t} + \left(\frac{\gamma  K}{t}\right) f \geq |\nabla f|^p \qquad \text{in }\mathscr{D}'(\R^d_T).\label{eq:ineq6}\end{equation}
Since $f, \frac{\partial f}{\partial t}, |\nabla f|^p \in L^1_{\loc}(\R^d_T)$, the distributions appearing in \eqref{eq:ineq6} are all regular distributions. Thus, we may interpret this inequality as holding pointwise a.e. in $\R^d_T$.

We observe that \eqref{eq:ineq6} is of the form \eqref{eq:thm1} with $C=1$, $r=0$, and $a(t) = \frac{\gamma K}{t}$. Since $u>0$ is Lipschitz continuous \cite{Alikakos}, $f=\frac{1}{\gamma}u^{\gamma}$ is also Lipschitz continuous and thus satisfies the assumptions (F1)--(F3) stated in Chapter \ref{sec:genharnack}. Since $m = \frac{r}{p-1}=-1$, we may apply case (iii) of Theorem \ref{thm:gen-dist}. We calculate that $\xi = \frac{1}{q}\left(\frac{1}{p}\right)^{q-1}$ and $e^{A(t)} = e^{\int^t \frac{\gamma k}{s} }\ \mathrm{d}s = t^{\gamma K}$. We must also calculate the value of $$I = \int_{t_1}^{t_2} e^{m(p-1)A(t)} \ \mathrm{d}t = \int_{t_1}^{t_2} t^{-\gamma K(p-1)} \ \mathrm{d}t,$$ which depends on the value of $1-\gamma K(p-1) = (2-p)K+1 =: \delta$. In particular, one has 
\begin{equation}\label{eq:integral1}
    I=\begin{cases}\dfrac{t_2^{\delta}-t_1^{\delta}}{\delta} & \delta \ne 0, \\ \log t_2 - \log t_1 & \delta =0.\end{cases}
\end{equation}
Inserting this information into \eqref{eq:caseiiia} gives 
\begin{equation*}
    f(x_2,t_2) \geq \left( \frac{t_1}{t_2} \right)^{\gamma K} \left( f(x_1,t_1) - \xi|x_2-x_1|^qI^{1-q}t_1^{-\gamma K} \right).
\end{equation*}
Finally, using that $f=\frac{1}{\gamma}u^{\gamma}$, we obtain the following inequality for $u$, 
\begin{equation*}
    [u(x_2,t_2)]^{\gamma} \geq \left( \frac{t_1}{t_2} \right)^{\gamma K} \left( [u(x_1,t_1)]^{\gamma} -\gamma \xi|x_2-x_1|^qI^{1-q}t_1^{-\gamma K} \right),
\end{equation*}
for all $x_1,x_2 \in \R^d$ and $0 < t_1 < t_2 < T$.

We now consider the case $\frac{2d}{d+1} < p < 2$. For $p$ in this range, $\gamma = \frac{p-2}{p-1} < 0$ and therefore, $f=\frac{1}{\gamma}u^{\gamma}$ is nonpositive. Thus, we redefine our transformation by introducing $$g= - \frac{1}{\gamma}u^{\gamma}$$ with $\gamma$ defined as before. Importantly we have that $g \geq 0$ a.e. in $\R^d_T$. By a similar derivation used to obtain \eqref{eq:newpdiff}, $g$ formally satisfies the equation 
\begin{equation}\label{eq:newpdiffg}
    \frac{\partial g}{\partial t} + \gamma g \divergence(|\nabla g|^{p-2} \nabla g) = -|\nabla g|^p.
\end{equation}

Similar to our Definition \ref{def:our-def-f} of a weak solution of the equation \eqref{eq:newpdiff} satisfied by $f$, we understand a weak solution of \eqref{eq:newpdiffg} by the following.

\begin{definition}\label{def:our-def-g}
Let $1 < p < \infty$ and $\gamma \in \R$. Then, we call a function $g \in L^{\infty}_{\loc}(\R^d_T) \cap L^{p}_{\loc}((0,T), W^{1,p}_{\loc}(\R^d))$ a positive weak solution of  \begin{equation*}\tag{\ref{eq:newpdiffg}}\frac{\partial g}{\partial t} + \gamma g \divergence(|\nabla g|^{p-2} \nabla g) = - |\nabla g|^p \qquad \text{in } \R^d_T\end{equation*} if $g >0$ and $g$ satisfies
\begin{align*}\label{eq:gweakint}\begin{split}
&-\int_0^T \int_{\R^d} g \frac{\partial \varphi}{\partial t} \ \mathrm{d}x \, \mathrm{d}t - \gamma \int_0^T \int_{\R^d} g|\nabla g|^{p-2}\nabla g \nabla \varphi \ \mathrm{d}x \, \mathrm{d}t \\& \hspace{3.8cm} + (1-\gamma)\int_0^T \int_{\R^d} |\nabla g|^p \varphi \ \mathrm{d}x \, \mathrm{d}t =0\end{split}
\end{align*} for all $\varphi \in C_c^{\infty}(\R^d_T)$.
\end{definition}

In particular, we note that by a similar argument used to justify Definition \ref{def:our-def-f}, the function $g=-\frac{1}{\gamma}u^{\gamma}$ will satisfy Definition \ref{def:our-def-g}.

Our new change of variables was set up so that $g=-f$. Applying the inequality \eqref{eq:estvaz} of Esteban and V\'azquez to $f$ and rearranging, we find that $g$ satisfies $$\divergence (|\nabla g|^{p-2}\nabla g) \leq \frac{K}{t} \qquad \text{in }\mathscr{D}'(\R^d_T)$$ where $K:=K(p,d)$ is defined as in Lemma \ref{lem:estvaz}. Multiplying this result by $\gamma g \leq 0$, we have $$\gamma g \divergence (|\nabla g|^{p-2}\nabla g) \geq \frac{\gamma K}{t} \qquad \text{in }\mathscr{D}'(\R^d_T)$$ and by using \eqref{eq:newpdiffg}, we see that $g$ satisfies $$\frac{\partial g}{\partial t} + \frac{\gamma K}{t}g \leq  -|\nabla g|^p \qquad \text{in }\mathscr{D}'(\R^d_T).$$ This inequality is of the form \eqref{eq:thm4} with $C=1$, $r=0$, and $a(t)=\frac{\gamma K}{t}$. As was calculated earlier in the case $p>2$, this leads to the quantities $q=\frac{p}{p-1}$, $m=-1$, $\xi = \frac{1}{q}(\frac{1}{p})^{q-1}$, $e^{A(t)}=t^{\gamma K}$, and $I$ given by \eqref{eq:integral1}. Applying case (iii) of Theorem \ref{thm:gen-backwards}, we have that $g$ satisfies
$$g(x_2,t_2) \leq \left( \frac{t_1}{t_2} \right)^{\gamma K} \left( g(x_1,t_1) + \xi|x_2-x_1|^qI^{1-q}t_1^{-\gamma K} \right)$$ for all $x_1,x_2 \in \R^d$ and $0 < t_1 < t_2 < T$. Returning to the original variables using $g=-\frac{1}{\gamma}u^{\gamma}$, we have that $$[u(x_2,t_2)]^{\gamma} \leq \left( \frac{t_1}{t_2} \right)^{\gamma K} \left( [u(x_1,t_1)]^{\gamma} -\gamma \xi|x_2-x_1|^qI^{1-q}t_1^{-\gamma K} \right).$$ Finally, we may take the reciprocal of both sides of  this inequality since both sides are strictly positive. Hence, we have that $g$ satisfies
\begin{equation*}
        [u(x_2,t_2)]^{-\gamma} \geq \left( \frac{t_2}{t_1} \right)^{\gamma K} \left( [u(x_1,t_1)]^{\gamma} -\gamma \xi|x_2-x_1|^qI^{1-q}t_1^{-\gamma K} \right)^{-1}
    \end{equation*}
for all $x_1,x_2 \in \R^d$ and $0 < t_1 < t_2 < T$, which is the result claimed in Theorem \ref{thm:pdiff}. 

\begin{remark}\label{rem:pdiffsubcritical}
The techniques used to derive Harnack inequalities for solutions of the $p$-diffusion equation were limited to the case $ \frac{2d}{d+1}< p < \infty$. In our proof, this restriction arose from the fact that the result of Esteban and V\'azquez (Lemma \ref{lem:estvaz}) does not hold for $p$ in the range, $1 < p \leq \frac{2d}{d+1}$, which is sometimes referred to as the \emph{subcritical range} in literature. However, it is known from the work of DiBenedetto et al. \cite{DiBenedettoGianazzaVespri} that in general, a Harnack inequality of the form discussed in this monograph does not hold for the solutions corresponding to $p$ in this subcritical range. 
\end{remark}

\chapter{Harnack Inequalities and H\"older Continuity}\label{sec:hoelder}

A standard application of Harnack inequalities is proving that the solutions of a parabolic equation are H\"older continuous jointly in space and time. As a demonstration, we will explore the proof introduced by Moser \cite{Moser1964} showing that nonnegative weak solutions of the linear parabolic equation  \begin{equation*}\frac{\partial u}{\partial t} = \sum_{k,l = 1}^{n} \frac{\partial}{\partial x_k}\left( a_{kl}(x,t)\frac{\partial u}{\partial x_l}\right) \qquad \text{in } \Omega_T \tag{\ref{eq:para}}\end{equation*} are locally H\"older continuous in $\Omega_T$. As in Chapter \ref{sec:intro}, the coefficients $a_{kl}(x,t)$ are assumed to be measurable functions with $a_{kl} = a_{lk}$ that satisfy the uniform ellipticity condition \eqref{eq:ellipticity}. In addition, we will make use of information about the explicit values of the constants appearing in the proof, which has been contributed recently by Bonforte et al. \cite{Bonforte}. However, we first introduce some key definitions for this chapter.

\begin{definition}[Uniform and local H\"older continuity, \cite{GilbargTrudinger}] Let $u: \Omega \rightarrow \R$ be a function on a bounded set $\Omega \subseteq \R^d$. The function $u$ is called \emph{uniformly H\"older continuous} in $\Omega$ with exponent $\alpha \in (0,1)$ if 
\begin{equation*}\label{eq:hoelder definition}
    \sup_{\substack{x,y \in \Omega \\ x\ne y}} \frac{|u(x)-u(y)|}{|x-y|^{\alpha}} < \infty.
\end{equation*}
As well, we call $u$ \emph{locally H\"older continuous} in $\Omega$ if $u$ is uniformly H\"older continuous on all compact subsets of $\Omega$.\end{definition}

Moreover, one may define the \emph{H\"older space} $C^{k,\alpha}(\Omega)$ for all nonnegative integers $k$ and $\alpha \in (0,1)$ as the space of functions $u \in C^k(\Omega)$, such that all $k^{\text{th}}$-order partial derivatives of $u$ are uniformly H\"older continuous on $\Omega$. Finally, we note that the property of being H\"older continuous implies continuity in the usual sense.

We require the following domains, which we define for all $x_0 \in \R^d$, $t_0 > 0$ and $R>0$:
\begin{align*}
    D_R(x_0,t_0)&:= B_{2R}(x_0) \times (t_0 - R^2, t_0 + R^2)\\
    D_R^+(x_0,t_0)&:= B_{R/2}(x_0) \times (t_0 +\tfrac{3}{4} R^2, t_0 + R^2)\\
    D_R^-(x_0,t_0)&:= B_{R/2}(x_0) \times (t_0 - \tfrac{3}{4}R^2, t_0 - \tfrac{1}{4}R^2)
\end{align*}
With these domains, we may state the Harnack inequality of Moser \cite{Moser1964} by the following.

\begin{theorem}\label{thm:moserharnack}
Let $T>0$, $R \in (0,\sqrt{T})$ and let $(x_0,t_0) \in \Omega_T$ be such that $D_R(x_0,t_0) \subset \Omega_T$. Then, there exists a constant $C>1$ such that 
\begin{equation}\label{eq:moserharnack}
    \sup_{D_R^-(x_0,t_0)} u \leq C\inf_{D_R^+(x_0,t_0)} u
\end{equation} for all nonnegative weak solutions $u$ of \eqref{eq:para}.
\end{theorem}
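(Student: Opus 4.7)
The plan is to prove Theorem \ref{thm:moserharnack} following Moser's iteration technique. Since this Harnack inequality is not a consequence of the general Auchmuty--Bao results of Chapter \ref{sec:genharnack} (those rely on pointwise gradient estimates which are unavailable for weak solutions of \eqref{eq:para} with merely measurable coefficients), I would develop the argument from scratch, adapting the strategy of \cite{Moser1964}. A preliminary reduction replaces $u$ by $u+\varepsilon$ for $\varepsilon>0$, which is still a weak solution of \eqref{eq:para} by linearity, and allows me to assume $u$ is strictly positive; the general case follows by letting $\varepsilon\to 0^+$ at the end.

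The core of the proof is to establish two one-sided estimates by Moser iteration. First, I would derive a Caccioppoli-type energy inequality: testing \eqref{eq:para} against $\zeta^2 u^{2\beta-1}$ for suitable cut-off functions $\zeta \in C_c^\infty(D_R(x_0,t_0))$ and exponents $\beta$, using the uniform ellipticity condition \eqref{eq:ellipticity} and Young's inequality, yields control of $\|\nabla(u^\beta)\|_{L^2}$ in terms of $\|u^\beta\|_{L^2}$ on a slightly enlarged cylinder. Combining this with a parabolic Sobolev embedding (of the form $L^\infty_t L^2_x \cap L^2_t H^1_x \hookrightarrow L^{2(1+2/d)}$) gives a reverse-Hölder inequality relating $L^{q\kappa}$ and $L^q$ norms of $u$ on nested cylinders, where $\kappa:=1+2/d>1$. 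Iterating this over a geometric sequence of cylinders shrinking to $D_R^-(x_0,t_0)$ produces, for every $p_0 > 0$,
\begin{equation*}
\sup_{D_R^-(x_0,t_0)} u \leq C_1 \left( \fint_{Q^-} u^{p_0} \,\mathrm{d}x\,\mathrm{d}t \right)^{1/p_0}
\end{equation*}
on a suitable intermediate cylinder $Q^-$. An entirely analogous iteration, this time with negative exponents $\beta<0$ (permissible because $u>0$), yields the dual estimate
\begin{equation*}
\left( \fint_{Q^+} u^{-p_0} \,\mathrm{d}x\,\mathrm{d}t \right)^{-1/p_0} \leq C_2 \inf_{D_R^+(x_0,t_0)} u.
\end{equation*}

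The bridge between these two estimates is the logarithmic estimate, obtained by testing the equation against $\zeta^2 u^{-1}$: after manipulation, this shows that $w:=\log u$ has bounded oscillation in a parabolic $\mathrm{BMO}$-type sense, specifically that there exists a constant $c = c(x_0,t_0,R)$ such that $w-c$ has controlled super-level sets on $D_R^+(x_0,t_0)$ and $D_R^-(x_0,t_0)$ respectively, with the cylinders separated in time so the sign of the $\log u$ oscillation is consistent. Applying a parabolic John--Nirenberg inequality (or, more efficiently, Bombieri's abstract iteration lemma) to $w$ then produces some $p_0>0$ for which
\begin{equation*}
\left( \fint_{Q^-} u^{p_0} \right)^{1/p_0} \leq C_3 \left( \fint_{Q^+} u^{-p_0} \right)^{-1/p_0},
\end{equation*}
and concatenating the three estimates with $C := C_1 C_2 C_3$ yields \eqref{eq:moserharnack}.

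The main obstacle is the careful geometric arrangement of the parabolic cylinders and the proof of the logarithmic/BMO step. Unlike the elliptic case, the positive and negative parts of the oscillation of $\log u$ live on time-separated cylinders $D_R^+$ and $D_R^-$ (reflecting the waiting-time phenomenon discussed after Moser's counterexample in Chapter \ref{sec:intro}), and the John--Nirenberg-type argument must be formulated to respect this time separation. The constants appearing at each iteration step also need to be tracked sufficiently carefully that the geometric series produced by the Moser iteration converges, which in the parabolic setting is delicate because the exponent of the Sobolev embedding is $\kappa=1+2/d$ rather than $d/(d-2)$; explicit bounds on these constants, as recently recorded in \cite{Bonforte}, would be invoked to make the final $C$ in \eqref{eq:moserharnack} quantitative.
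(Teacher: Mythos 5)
Note first that the paper does not actually prove Theorem \ref{thm:moserharnack}. It is stated as Moser's result, cited to \cite{Moser1964}, with the refined constant attributed to \cite{Bonforte}; it is then used as a black box in the proof of the subsequent H\"older continuity theorem. There is therefore no in-paper proof against which to compare your argument.

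That said, your sketch is a faithful outline of Moser's original iteration scheme, and you are right to observe that the Auchmuty--Bao machinery of Chapter \ref{sec:genharnack} cannot yield \eqref{eq:moserharnack}: those theorems rest on pointwise gradient estimates along $C^1$ curves, which are unavailable for weak solutions of \eqref{eq:para} with only bounded measurable coefficients. Your three-part decomposition --- positive-exponent Moser iteration to bound $\sup_{D_R^-} u$ by an $L^{p_0}$ norm, negative-exponent iteration to bound an $L^{-p_0}$ norm by $\inf_{D_R^+} u$, and a logarithmic bridge via a parabolic John--Nirenberg or Bombieri-type lemma --- is exactly the structure of \cite{Moser1964} and of the quantitative version in \cite{Bonforte}. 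Two cautionary remarks. The parabolic John--Nirenberg lemma as presented in Moser's 1964 paper had a gap, which Moser corrected in 1967; the Bombieri--Giusti abstract lemma you mention is the standard modern fix, but the version needed here must respect the time-separation between $D_R^+$ and $D_R^-$, which has no elliptic analogue and is where most of the real work lies. Also, the admissibility of the test functions $\zeta^2(u+\varepsilon)^{2\beta-1}$ for $\beta<0$ and the limit $\varepsilon\to 0^+$ require a truncation argument within the energy class for \eqref{eq:para}; this is routine but not automatic, and a complete write-up cannot gloss over it.
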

We note that the constant $C$ in this theorem was shown in \cite{Bonforte} to be at least $\tfrac{4}{3}$. For more details concerning the explicit value of this constant, we refer the reader to \cite{Bonforte}.

Next, we suppose that $\Omega' \subset \Omega \subseteq \R^d$ are bounded domains. In what will follow, we will also assume for simplicity that $\Omega,\Omega'$ are convex domains, although this assumption is not necessary. Then, define $Q:=\Omega \times (T_1,T_4)$ and $Q':=\Omega' \times (T_2,T_3)$, where $0 < T_1 < T_2 < T_3 < T_4 < T$. Finally, we define the parabolic distance between $Q$ and $Q'$ by $$d(Q,Q'):=\inf_{\substack{(x,t) \in Q' \\ (y,s) \in \partial \Omega \times [T_1,T_4] \cup \{T_1,T_4\} \times \Omega}} |x-y| + |t-s|^{1/2}.$$ Now, we are ready to state the main result of this section, which we formulate as in \cite{Bonforte}.

\begin{theorem}
Suppose $u$ is a bounded solution of \eqref{eq:para} on $Q$. Then \begin{equation}\label{eq:hoelder}
    \sup_{(x,t),(y,s) \in Q'} \frac{|u(x,t)-u(y,s)|}{(|x-y| + |t-s|^{1/2})^{\nu}} \leq 2 \left(\frac{256}{d(Q,Q')} \right)^{\nu} \|u\|_{L^{\infty}(Q)}
\end{equation} where $\nu :=\log_4 (\frac{C}{C-1})$ and $C$ is the constant appearing in Theorem \ref{thm:moserharnack}. In particular, this implies that $u$ is locally H\"older continuous on $Q$.
\end{theorem}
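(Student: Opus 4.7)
The plan is to iterate Moser's Harnack inequality (Theorem \ref{thm:moserharnack}) to produce geometric decay of the oscillation of $u$ on nested parabolic cylinders, and then convert that decay into the two-point H\"older estimate \eqref{eq:hoelder}.

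The main engine will be a single-scale oscillation-reduction lemma: given a cylinder $D_R(x_0,t_0) \subset Q$, set $M := \sup_{D_R} u$, $m := \inf_{D_R} u$, and $\omega := M - m$. Since \eqref{eq:para} is linear, both $M - u$ and $u - m$ are nonnegative weak solutions, so applying Theorem \ref{thm:moserharnack} to each gives
\begin{align*}
M - \inf_{D_R^-} u &\leq C\bigl(M - \sup_{D_R^+} u\bigr), \\
\sup_{D_R^-} u - m &\leq C\bigl(\inf_{D_R^+} u - m\bigr).
\end{align*}
Adding these and using $\inf_{D_R^-} u \geq m$ and $\sup_{D_R^-} u \leq M$, I would derive $\operatorname{osc}_{D_R^+(x_0,t_0)} u \leq \tfrac{C-1}{C}\omega$. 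A direct inclusion check gives $D_{R/4}(x_0,\, t_0 + 7R^2/8) \subset D_R^+(x_0,t_0)$, upgrading this to a genuine scale-reduction estimate: the oscillation on a (shifted) cylinder of radius $R/4$ is at most $(C-1)/C$ times that on the outer cylinder of radius $R$.

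Choosing an initial radius $R_0 \sim d(Q,Q')$ so that $D_{R_0}(x_0,t_0) \subset Q$ for every $(x_0,t_0) \in Q'$, I would iterate the scale-reduction estimate to produce nested cylinders $D_{R_k}(x_k,t_k)$ with $R_k = 4^{-k} R_0$ satisfying
\[
\operatorname{osc}_{D_{R_k}(x_k,t_k)} u \;\leq\; 2\|u\|_{L^\infty(Q)}\bigl(\tfrac{C-1}{C}\bigr)^k = 2\|u\|_{L^\infty(Q)}(R_k/R_0)^\nu,
\]
where $\nu := \log_4\!\bigl(\tfrac{C}{C-1}\bigr)$. A standard interpolation between consecutive dyadic scales then upgrades this to a continuous pointwise oscillation bound $\operatorname{osc}_{D_r(x_*,t_*)} u \leq C_0 (r/R_0)^\nu \|u\|_{L^\infty(Q)}$, valid for every $(x_*,t_*) \in Q'$ and every $r \in (0,R_0]$.

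For the two-point bound I take $(x,t),(y,s) \in Q'$ and set $\rho := |x-y| + |t-s|^{1/2}$. If $\rho \geq R_0$ the estimate follows trivially from $|u(x,t)-u(y,s)| \leq 2\|u\|_{L^\infty(Q)}$; if $\rho < R_0$, I pick a parabolic cylinder of radius $\lesssim \rho$ containing both points and apply the oscillation estimate, picking up the factor $2$ from a midpoint splitting. Calibrating $R_0 = d(Q,Q')/256$ matches the prefactor in \eqref{eq:hoelder}. The main obstacle is the bookkeeping in the dyadic iteration: the center of each nested cylinder shifts in time by $7R_k^2/8$, so to control $u$ at an arbitrary $(x_*,t_*) \in Q'$ one must either pre-engineer the chain of cylinders so that $(x_*,t_*) \in D_{R_k}(x_k,t_k)$ for all $k$, or combine inclusion with translation invariance to transfer the estimate to cylinders centered at the point of interest. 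Making sure that the outermost cylinder---and the intermediate ones after the time shifts accumulate---still fit inside $Q$ is precisely what pins down the explicit constant $256 \approx 4^4$ in \eqref{eq:hoelder}; the H\"older exponent $\nu = \log_4(C/(C-1))$ drops out directly from the decay ratio per scale factor.
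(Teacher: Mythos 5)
Your proposal is essentially the paper's argument: the oscillation-reduction step from Moser's Harnack inequality applied to $M-u$ and $u-m$, followed by dyadic iteration over nested parabolic cylinders with $\nu = \log_4(C/(C-1))$ and the threshold split on $\rho$ versus $\delta \sim d(Q,Q')$. The bookkeeping subtlety you flag (accumulating time shifts) is resolved in the paper by constructing the chain from the \emph{inside out} rather than outside-in: it starts at the midpoint $(z,\tau_0) = (\tfrac{x+y}{2}, \tfrac{t+s}{2})$ with the smallest radius $R_0 = \delta/4^{k-1}$ and expands via $R_{j+1} = 4R_j$, $\tau_{j+1} = \tau_j - 14R_j^2$, so that both target points sit in the innermost cylinder $D_{R_0}(z,\tau_0)$ and only the outermost cylinder needs to be checked to lie in $Q$ — avoiding the problem that an outside-in drift of order $R_0^2$ would swamp the tiny final cylinder.
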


\begin{proof}

The proof will occur in two main steps. First, we will compare the oscillation of $u$ on $D_R(x_0,t_0)$ and $D_R^{+}(x_0,t_0)$, resulting in an inequality with a quantitative bound. By the oscillation of $u$ on a domain, we mean the difference of the supremum and infimum of $u$ on this domain. Then, we will construct a finite sequence of nested sets, on which we will iterate the inequality found in the first step.

First, define the quantities \begin{align*}
M&:= \sup_{D_R(x_0,t_0)} u, & m&:= \inf_{D_R(x_0,t_0)} u, & \omega&:= M-m, \\ M^{\pm}&:= \sup_{D_R^{\pm}(x_0,t_0)} u,    & m^{\pm}&:= \inf_{D_R^{\pm}(x_0,t_0)} u, & \omega^{+}&:= M^{+} - m^{+}.
\end{align*}

By construction, the functions $M-u$ and $u-m$ will be nonnegative solutions of \eqref{eq:para} and will hence satisfy the Harnack inequality \eqref{eq:moserharnack}. Thus, one has that
\begin{align*}
    M-m^- = \sup_{D_R^-(x_0,t_0)}(M-u) &\leq C \inf_{D_R^+(x_0,t_0)} (M-u) = C(M-M^+)\\
    M^- -m = \sup_{D_R^-(x_0,t_0)}(u-m) &\leq C \inf_{D_R^+(x_0,t_0)} (u-m) = C(m^+ -m)
\end{align*}
Adding these two inequalities, we obtain
$$M-m^- + M^- - m \leq C(M-M^+ + m^+ - m).$$ Using that $M^- - m^- \geq 0$, we may write the following inequality in terms of the oscillations $\omega, \omega^+$, 
$$\omega \leq C(\omega - \omega^+),$$ which rearranges to give \begin{equation}\label{eq:stageonehoelder}
    \omega^+ \leq \frac{C-1}{C} \omega,
\end{equation}
concluding the first stage of the proof. For later use, we let $\zeta:=\frac{C-1}{C}$.

Next, we let $\delta:=\frac{d(Q,Q')}{64}$, so that if $(x,t) \in Q'$ and $(y,s)$ is any point in $\R^d_T$, then $$|x-y| + |t-s|^{1/2} \leq \delta $$ implies $(y,s) \in Q$. In particular, this inequality guarantees that $(y,s)$ cannot be far away from the set $Q'$. Now, for any $(x,t), (y,s) \in Q'$, one of the two following inequalities must hold:
\begin{align}\label{eq:caseihoelder} |x-y| + |t-s|^{1/2} &< \delta\\
\label{eq:caseiihoelder} |x-y| + |t-s|^{1/2} &\geq \delta
\end{align}
Suppose first that $\eqref{eq:caseihoelder}$ holds. Then there exists a nonnegative integer $k$ such that 
\begin{equation}\label{eq:kinteger}
    \frac{\delta}{4^{k+1}} \leq |x-y| + |t-s|^{1/2} \leq \frac{\delta}{4^k}.
\end{equation}

Next, let $z:=\frac{x+y}{2}$ and $\tau_0:=\frac{t+s}{2}$. We note that since we assumed that $Q'$ is convex, we have $(z,\tau_0) \in Q'$. Then, we construct a finite sequence of nested domains determined by the following values for all $0 \leq j\leq k-1$.
\begin{align*}
    R_{j+1}&:=4R_j, \quad R_0 = \frac{\delta}{4^{k-1}}\\
    \tau_{j+1}&:=\tau_j - 14 R_j^2, \quad \tau_0:=\frac{t+s}{2}
\end{align*}
In particular, $D_{R_j}(z,\tau_j) \subset D_{R_{j+1}}^+(z,\tau_{j+1})$ and $D_{R_0}(z,\tau_0) \subset D_{R_1}^+ (z,\tau_1)$. Indeed, we observe that the definition of $R_j$ implies that $B_{2R_j}(z) = B_{(R_{j+1})/2}$. This, together with $$\tau_{j+1} + \tfrac{3}{4} R_{j+1}^2 = \tau_j - 14R_j^2 + \tfrac{3}{4}\cdot 16 R_j^2 = \tau_j - 2R_j^2 < \tau_j - R_j^2$$ and $$\tau_{j+1} + R_{j+1}^2 = \tau_j - 14R_j^2 + 16R_j^2 = \tau_j + 2R_j^2 > \tau_j + R_j^2$$ implies that $D_{R_j}(z,\tau_j) \subset D_{R_{j+1}}^+(z,\tau_{j+1})$ for all $0 \leq j\leq k-1$. As well, using the right inequality in \eqref{eq:kinteger},
$$|x-z| = \frac{1}{2}|x-y| \leq \frac{1}{2}\left(\frac{\delta}{4^k} - |t-s|^{1/2}\right)  \leq \frac{\delta}{4^{k-1}}=R_0$$ and $$|t-\tau_0| =\frac{1}{2}\left( \frac{\delta}{4^k} - |x-y|\right)^2 \leq \left( \frac{\delta}{4^{k-1}}\right)^2$$ and so $$\tau_0 - R_0^2 \leq t \leq \tau_0 + R_0^2.$$ By repeating this process with $(x,t)$ replaced by $(y,s)$, we are able to conclude $(x,t),(y,s) \in D_{R_0}(z,\tau_0)$. Finally, we observe that $D_{R_k}(z,\tau_k) \subset Q$ due to the property obtained from the definition of $\delta$. 

In preparation for iterating the inequality obtained in the first stage of the proof we define the oscillations
\begin{align*}
    \omega_j&:=\sup_{D_{R_j}(z,\tau_j)} u - \inf_{D_{R_j}(z,\tau_j)}u,\\\omega_j^+&:=\sup_{D_{R_j}^+(z,\tau_j)} u - \inf_{D_{R_j}^+(z,\tau_j)}u.
\end{align*}
By the properties of the oscillation of a function, $D_{R_j}(z,\tau_j) \subset D_{R_{j+1}}^+(z,\tau_{j+1})$ implies that 
\begin{equation}\label{eq:simpleoscillation}
    \omega_j \leq \omega_{j+1}^+
\end{equation} for all $0 \leq j \leq k-1$. Finally, by using inequalities \eqref{eq:stageonehoelder} and \eqref{eq:simpleoscillation} repeatedly, we have that $$|u(x,t) - u(y,s)| \leq \omega_0 \leq \omega_1^+ \leq \zeta \omega_1 \leq \zeta \omega_2^+ \leq \zeta^2 \omega_2 \leq \ldots \leq \zeta^k\omega_k.$$ Let $\nu:=\log_4(\frac{1}{\zeta})$, so that $\zeta = (\frac{1}{4})^{\nu}$. Then, we rewrite the last inequality as
$$|u(x,t) - u(y,s)| \leq \left(\frac{1}{4}\right)^{k\nu} \omega_k = \left(\frac{4}{\delta}\right)^{\nu}\left(\frac{\delta}{4^{k+1}}\right)^{\nu}\omega_k.$$
Then, applying the left hand inequality in \eqref{eq:kinteger}, it follows that 
$$|u(x,t) - u(y,s)| \leq \left(\frac{4}{\delta}\right)^{\nu} (|x-y| + |t-s|^{1/2})^{\nu} \omega_k.$$ Finally, since $$\omega_k \leq 2 \sup_{D_{R_k}(z,\tau_k)} u \leq 2 \sup_{Q} u = 2\|u\|_{L^{\infty}(Q)},$$ we obtain $$|u(x,t) - u(y,s)| \leq 2  \left(\frac{4}{\delta}\right)^{\nu} \|u\|_{L^{\infty}(Q)}(|x-y| + |t-s|^{1/2})^{\nu},$$ which implies \eqref{eq:hoelder}.

Lastly, the result in the case when \eqref{eq:caseiihoelder} holds follows as a simple consequence of this inequality. Indeed,
\begin{align*}
    |u(x,t) - u(y,s)| &\leq 2\|u\|_{L^{\infty}(Q)} = 2\|u\|_{L^{\infty}(Q)} \frac{\delta^{\nu}}{\delta^{\nu}}\\
    &\leq 2\|u\|_{L^{\infty}(Q)} \frac{(|x-y| + |t-s|^{1/2})^{\nu}}{\delta^{\nu}}\\
    & \leq 2  \left(\frac{4}{\delta}\right)^{\nu} \|u\|_{L^{\infty}(Q)}(|x-y| + |t-s|^{1/2})^{\nu}
\end{align*}

Inequality \eqref{eq:hoelder} implies that $u$ is uniformly H\"older continuous on convex subsets $Q'$ of $Q$. To extend this result to all compact subsets of $Q$, we recall that any compact set in $Q$ can be covered by finitely many open balls. Since open balls are convex sets, we may apply \eqref{eq:hoelder} on each ball to obtain uniform H\"older continuity on compact sets in $Q$, or equivalently, local H\"older continuity on $Q$.

\end{proof}

We end this chapter by commenting that similar proofs have been found to demonstrate the local H\"older continuity of solutions of nonlinear equations, including the porous medium equation and $p$-diffusion equation studied in Chapter \ref{sec:applications}. However, we note that even though H\"older continuity often follows as a consequence of a Harnack inequality, the reverse implication is not true in general. As a counterexample, one may consider solutions of the $p$-diffusion equation pertaining to the subcritical range $1 < p \leq \frac{2d}{d+1}$. These solutions are known to be H\"older continuous, but as noted in Remark \ref{rem:pdiffsubcritical}, they do not satisfy a Harnack inequality similar to \eqref{eq:moserharnack} \cite{DiBenedettoGianazzaVespri}.

\chapter{Final Remarks}

\begin{remark}[A Note on the Assumptions of Theorems \ref{thm:generalharnackineq}, \ref{thm:gen-dist}, and \ref{thm:gen-backwards}]
In each of the general Harnack inequality theorems proven in Chapter \ref{sec:genharnack}, it was always assumed that the constant $C$ appearing in the gradient estimates \eqref{eq:thm1} and \eqref{eq:thm4} was strictly positive. Not only would the argument via Young's inequality in the proof break down if we allowed $C\leq 0$, such an assumption would not lead to a valid result. We demonstrate this by a counterexample.

Let $1 < p \leq \frac{2d}{d+1}$ and let $u$ be a positive solution of the $p$-diffusion equation $$u_t = \Delta_p u \qquad \text{in }\Omega_T,$$ where we take $\Omega$ to be any open convex set in $\R^d$. An inequality of B\'enilan and Crandall \cite{benilan-crandall} states that
\begin{equation}\label{eq:bc}
    u_t \leq -\frac{1}{(p-2)t}u \qquad \text{in }\Omega_T.
\end{equation}

Letting $g:=-\frac{1}{\gamma}u^{\gamma}$ with $\gamma = \frac{p-2}{p-1}$ and using a calculation from Section \ref{sec:pdiff}, we know $$\Delta_p(-g) = \frac{\Delta_p u}{u} - \frac{|\nabla u|^p}{u^2} = \frac{u_t}{u} - \frac{|\nabla u|^p}{u^2}.$$ Making use of \eqref{eq:bc}, we obtain that $$-\Delta_p g \leq -\frac{1}{(p-2)t} - \frac{|\nabla u|^p}{u^2} \leq -\frac{1}{(p-2)t}.$$ This leads to the inequality \begin{equation}\label{eq:pseudo-estvaz}
    \Delta_p g \geq \frac{1}{(p-2)t}.
\end{equation}
Recall that $g$ satisfies the equation  \begin{equation*}\tag{\ref{eq:newpdiffg}}\frac{\partial g}{\partial t} + \gamma g \Delta_p g = -|\nabla g|^p.\end{equation*} Combining this with \eqref{eq:pseudo-estvaz} gives the inequality $$\frac{\partial g}{\partial t} + \frac{\gamma}{(p-2)t}g \geq -|\nabla g|^p,$$ which appears to be of the form \eqref{eq:thm1}, except here $C=-1 <0$. However, Harnack inequality of the form discussed in Theorems \ref{thm:generalharnackineq} or \ref{thm:gen-dist} cannot follow from here, since this would also imply a Harnack inequality for $u$. This would contradict the result mentioned in Remark \ref{rem:pdiffsubcritical}, that a solution of the $p$-diffusion equation for $p$ in the subcritical range $1 < p \leq \frac{2d}{d+1}$ does not in general satisfy a Harnack inequality of this form \cite{DiBenedettoGianazzaVespri}. Moreover, inequality \eqref{eq:bc} can be considered as being of the form \eqref{eq:thm4} with $C=0$. However, a Harnack inequality cannot follow from \eqref{eq:bc} alone for this same reason.

\end{remark}

\begin{remark}[The Doubly Nonlinear Equation]
The porous medium equation $u_t = \Delta(u^M)$ and $p$-diffusion equation $u_t = \Delta_p u$ are both special cases of a more general equation, \begin{equation}\label{eq:dnle} u_t = \Delta_p(u^M)\end{equation} called the \emph{doubly nonlinear equation}. In particular, we may recover the porous medium equation from \eqref{eq:dnle} by setting $p=2$ and the $p$-diffusion equation by setting $M=1$.

In the paper of Esteban and V\'azquez \cite{Esteban-Vasquez-1990} containing Lemma \ref{lem:estvaz}, the authors concluded with a remark that their methods could be extended to prove an analogous result for nonnegative solutions of \eqref{eq:dnle} in $\R^d_T$ with $M$ and $p$ satisfying $M(p-1) - 1 + \frac{p}{d} >0$. Given the existence of this result, we believe it possible to derive a Harnack inequality satisfied by solutions of \eqref{eq:dnle} using the methods discussed in this monograph.

\end{remark}

\section{Future Work}

We conclude by addressing some potential directions for future work.
\vspace{15pt}

\hspace{-18pt}\textbf{Boundary Value Problems}
\vspace{6pt}

In Chapter \ref{sec:applications}, we applied the theorems of Auchmuty and Bao \cite{Auchmuty-Bao-1994} to derive Harnack inequalities for solutions of evolution problems posed on the domain $\R^d_T:= \R^d \times (0,T)$. Although the results of Auchmuty and Bao hold when the spatial domain is any open convex set $\Omega$ in $\R^d$, we were limited to choose $\Omega = \R^d$ in our applications. This is because the derivation of an appropriate gradient estimate of the form \eqref{eq:thm1} or \eqref{eq:thm4} depended fundamentally on the inequalities of Aronson and B\'enilan (Lemma \ref{lem:ab}) and Esteban and V\'azquez (Lemma \ref{lem:estvaz}) in the cases of the porous medium equation and $p$-diffusion equation respectively. To our knowledge, these estimates are only known to hold for solutions of their respective problems posed on the full spatial domain $\R^d$. In order to extend the techniques demonstrated in this monograph to find Harnack inequalities for solutions of boundary value problems, one would require a suitable analogue of these results, which is valid for such solutions.
\vspace{15pt}

\hspace{-18pt}\textbf{The $p$-Dirichlet-to-Neumann Operator}
\vspace{6pt}

We are interested to apply the techniques developed in this monograph to discover Harnack inequalities obeyed by solutions of other nonlinear evolution problems. In particular, we would like to investigate whether solutions of the parabolic problem associated with the $p$-Dirichlet-to-Neumann operator satisfy such an inequality. We briefly describe the construction of this operator below, which may be found in \cite{Hauer-JDE} along with a discussion of its basic properties.

Let $\Omega$ be a bounded domain in $\R^d$ with a Lipschitz continuous boundary. Then, it is well known that for $1<p<\infty$ and for all boundary values $\varphi \in W^{1-1/p,p}(\partial \Omega)$, there exists a unique weak solution of the $p$-Dirichlet problem
\begin{equation*}
\begin{cases}\label{eq:p-Dirichlet}
-\Delta_p u=0 & \text{in } \Omega, \\
u=\varphi  & \text{on } \partial \Omega.
\end{cases}
\end{equation*}
Denoting this weak solution by $P\varphi$, we formally define the \emph{Dirichlet-to-Neumann operator} $\Lambda$ associated with the $p$-Laplace operator $\Delta_p$ by $$\Lambda \varphi:=|\nabla P\varphi|^{p-2} \frac{\partial P\varphi}{\partial \nu}$$ for all $\varphi \in W^{1-1/p,p}(\Omega)$, where $\nu$ is the outward pointing unit normal vector on the boundary $\partial \Omega$.

With reference to the physical interpretation of the $p$-Laplace operator $\Delta_p$ given in Section \ref{sec:modelling}, we may understand the Dirichlet-to-Neumann operator $\Lambda$ as mapping the electric potential on the boundary of a medium $\Omega$ to the outward pointing  current through the boundary $\partial \Omega$. This operator appears in inverse problems associated with the $p$-Laplace operator and has applications, for example, to medical imaging. In particular, the operator $\Lambda$ can be used to learn about the composition of the medium $\Omega$ by detecting regions of various levels of conductivity. For instance, the location of bones within a body can be identified due to their lower conductivity compared with surrounding body tissues.

An important property of the operator $\Lambda$ is that unlike the other operators discussed in this monograph, it is a nonlocal operator. By this, we mean that the value of the function $\Lambda \varphi$ cannot be determined at a point only using the values of $\varphi$ in a neighbourhood of that point. Instead, the function values of $\Lambda \varphi$ can only be determined if the value of $\varphi$ is known on its entire domain. At its core, a Harnack inequality provides a uniform estimate of a function in a neighbourhood of a point given the value of the function at just that one point. Hence, it can be understood as a local property of a function. Thus, we are interested to see whether such an inequality could hold for functions whose evolution is governed by the  Dirichlet-to-Neumann operator, given its nonlocal properties.

%% FILL THIS IN

\newpage
%---------------------------------
%   Appendix
%---------------------------------

\appendix
\renewcommand{\thesection}{\Alph{chapter}.\arabic{section}}
\chapter{The Sobolev Spaces $W^{1,p}(a,b)$ and $W^{1,p}(\Omega)$ }\label{sobolev}
Here, we define and state some basic properties of the Sobolev spaces $W^{1,p}(a,b)$ and $W^{1,p}(\Omega)$.
The results in this section will be primarily taken from Brezis \cite{Brezis}. Accordingly, we adopt the same convention used in \cite{Brezis}, whereby $\int_a^b f$ will be used to denote the integral $\int_a^b f(x) \ \mathrm{d}\mu(x)$ with respect to the Lebesgue measure.

\section{The Sobolev Space $W^{1,p}(a,b)$}
Let $(a,b)$ be a (possibly unbounded) open interval. For every $p \in \R$ with $1 \leq p \leq \infty$, the Sobolev space $W^{1,p}(a,b)$ is the set of all functions $u \in L^p(a,b)$ for which there exists another function $g \in L^p(a,b)$ such that $$\int_a^b u\varphi' = -\int_a^b g\varphi$$ for all $\varphi \in C^\infty_c(a,b)$. Moreover, we denote the space $W^{1,2}(a,b)$ by $H^1(a,b)$.

For $u \in W^{1,p}(a,b)$, the function $u':=g$ is called the weak derivative of $u$. If a function $u$ has a weak derivative, then this derivative is unique. This is a consequence of the Fundamental Lemma of Calculus of Variations.

\begin{lemma}[Fundamental Lemma of Calculus of Variations]\label{fundlemma}
Let $\Omega \subseteq \R^d$ be open and let $u \in L^1_{\loc} (\Omega)$ be such that $$\int_{\Omega} u\varphi = 0$$ for all $\varphi \in C_c^\infty (\Omega)$. Then $u=0$ a.e. on $\Omega$.
\end{lemma}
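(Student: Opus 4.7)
The plan is to turn the vanishing of integrals against smooth compactly supported test functions into pointwise almost-everywhere vanishing of $u$, and for this the natural tool is mollification. First I would fix a standard mollifier family $(\rho_\varepsilon)_{\varepsilon>0}$ in $C_c^\infty(\R^d)$, with $\rho_\varepsilon \geq 0$, $\supp \rho_\varepsilon \subseteq \overline{B_\varepsilon(0)}$, and $\int_{\R^d}\rho_\varepsilon = 1$. For each $\varepsilon > 0$ I introduce the interior set $\Omega_\varepsilon := \{x \in \Omega : \mathrm{dist}(x,\partial\Omega) > \varepsilon\}$ and the mollification
\[
u_\varepsilon(x) \,:=\, (u * \rho_\varepsilon)(x) \,=\, \int_\Omega u(y)\,\rho_\varepsilon(x-y)\,\mathrm{d}y,
\]
which is well defined for $x \in \Omega_\varepsilon$ because the integrand is then supported in $\overline{B_\varepsilon(x)} \subset \Omega$ and $u \in L^1_{\loc}(\Omega)$.

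The central observation is that for each fixed $x \in \Omega_\varepsilon$, the map $y \mapsto \rho_\varepsilon(x-y)$ belongs to $C_c^\infty(\Omega)$ and is therefore an admissible test function in the hypothesis. Plugging it in yields $u_\varepsilon(x) = 0$ for every $x \in \Omega_\varepsilon$ and every $\varepsilon > 0$; equivalently, $u_\varepsilon \equiv 0$ on $\Omega_\varepsilon$.

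Next I would invoke the standard convergence property of mollifiers: for any $u \in L^1_{\loc}(\Omega)$ and any compact $K \subset \Omega$ one has $u_\varepsilon \to u$ in $L^1(K)$ as $\varepsilon \to 0^+$. Since each compact $K \subset \Omega$ is contained in $\Omega_\varepsilon$ for all sufficiently small $\varepsilon$, the identity $u_\varepsilon|_K \equiv 0$ passes to the limit and gives $\|u\|_{L^1(K)} = 0$, i.e.\ $u = 0$ a.e.\ on $K$. Exhausting $\Omega$ by an increasing sequence of such compact subsets then delivers $u = 0$ a.e.\ on $\Omega$, as required.

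The main point requiring care is the mismatch between a mollifier defined on all of $\R^d$ and the requirement that the test functions inserted into the hypothesis lie in $C_c^\infty(\Omega)$; restricting the argument to the interior set $\Omega_\varepsilon$ handles this cleanly, at the expense of the final exhaustion step. Beyond that, everything rests on well-known properties of convolution with mollifiers, so I do not anticipate any serious technical obstacle.
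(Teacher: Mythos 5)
Your mollification argument is correct and is the standard proof of this lemma; the paper itself only states the result and defers to Brezis \cite{Brezis}, whose proof proceeds by exactly the same mechanism (testing against translates of a mollifier, concluding $u_\varepsilon \equiv 0$ on interior sets, and passing to the limit in $L^1_{\loc}$). Your care in restricting to $\Omega_\varepsilon$ so that $y \mapsto \rho_\varepsilon(x-y)$ is genuinely in $C_c^\infty(\Omega)$, and then exhausting $\Omega$ by compacts, is precisely the right bookkeeping.
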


We now collect some basic properties of the space $W^{1,p}(a,b)$.
\begin{proposition}
The space $W^{1,p}(a,b)$ satisfies the following:
\begin{enumerate}[label=(\roman*)]
    \item For $1 \leq p \leq \infty$, $W^{1,p}(a,b)$ with the norm $$\|u\|_{W^{1,p}}:= \|u\|_{p} + \|u'\|_{p}$$ is a Banach space. Alternatively, one may also define the norm by $\|u\|_{W^{1,p}}:= (\|u\|_{p}^p + \|u'\|_{p}^p)^{1/p}$ and the result still holds;
    \item $W^{1,p}(a,b)$ is reflexive for $1 < p < \infty$;
    \item $W^{1,p}(a,b)$ is separable for $1 \leq p < \infty$.
\end{enumerate}
\end{proposition}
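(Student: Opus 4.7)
The plan is to prove all three assertions simultaneously by exhibiting an isometric embedding of $W^{1,p}(a,b)$ as a closed subspace of the product space $L^p(a,b) \times L^p(a,b)$, and then transferring the relevant Banach-space properties from $L^p$.

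First, I would define the linear map $T: W^{1,p}(a,b) \to L^p(a,b) \times L^p(a,b)$ by $T(u) := (u,u')$, where the product space is equipped with the norm $\|(f,g)\|_{L^p \times L^p} := \|f\|_p + \|g\|_p$ (or the $\ell^p$-version, depending on which convention for the $W^{1,p}$-norm is being used; the arguments are identical). By construction, $T$ is a linear isometry from $W^{1,p}(a,b)$ onto its image in $L^p(a,b)\times L^p(a,b)$. The point of this device is that the product of two copies of $L^p$ inherits completeness for every $1 \leq p \leq \infty$, reflexivity for $1<p<\infty$, and separability for $1 \leq p <\infty$ directly from the corresponding standard properties of $L^p(a,b)$.

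Next, and this is the only nontrivial step, I would show that $T(W^{1,p}(a,b))$ is closed in $L^p(a,b)\times L^p(a,b)$. Suppose $(u_n, u_n') \to (u,g)$ in $L^p \times L^p$. For every test function $\varphi \in C_c^{\infty}(a,b)$ the defining identity
\begin{equation*}
\int_a^b u_n \varphi' = -\int_a^b u_n' \varphi
\end{equation*}
holds. The two integrals are continuous in $u_n$ and $u_n'$ in the $L^p$ topology (by H\"older's inequality, since $\varphi, \varphi' \in L^{p'}(a,b)$ with compact support), so passing to the limit yields
\begin{equation*}
\int_a^b u\, \varphi' = -\int_a^b g\, \varphi
\end{equation*}
for all $\varphi \in C_c^{\infty}(a,b)$. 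By the definition of the weak derivative this means $u \in W^{1,p}(a,b)$ with $u' = g$, so $(u,g) = T(u) \in T(W^{1,p}(a,b))$. Uniqueness of $g$ (and hence well-definedness of the weak derivative used implicitly throughout) is guaranteed by Lemma \ref{fundlemma}.

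Finally, I would conclude each claim. For (i): since $L^p(a,b)\times L^p(a,b)$ is a Banach space and $T(W^{1,p}(a,b))$ is closed therein, $T(W^{1,p}(a,b))$ is itself a Banach space, and the isometry $T$ transports completeness back to $W^{1,p}(a,b)$ with the stated norm. For (ii): for $1<p<\infty$, $L^p(a,b)$ is reflexive, so $L^p(a,b)\times L^p(a,b)$ is reflexive as a finite product of reflexive spaces; closed subspaces of reflexive spaces are reflexive, hence $T(W^{1,p}(a,b))$ is reflexive, and isometric isomorphism preserves reflexivity. For (iii): for $1\leq p<\infty$, $L^p(a,b)$ is separable, hence so is $L^p(a,b)\times L^p(a,b)$; any subset of a separable metric space is separable, giving separability of $T(W^{1,p}(a,b))$, and again the isometry transfers this to $W^{1,p}(a,b)$. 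The main obstacle is the closedness step; everything else is an abstract transfer along the isometry.
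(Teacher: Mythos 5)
Your proof is correct and is exactly the standard argument (the one in Brezis, which the paper cites for this appendix without reproducing a proof): embed $W^{1,p}(a,b)$ isometrically into $L^p(a,b)\times L^p(a,b)$ via $u\mapsto(u,u')$, show the image is closed by passing to the limit in the defining integral identity against test functions, and transfer completeness, reflexivity, and separability from the product of $L^p$ spaces. All steps, including the H\"older-based continuity of the pairings (valid for every $1\le p\le\infty$ since $\varphi,\varphi'\in C_c^\infty$ lie in every $L^{p'}$) and the restriction of (ii) and (iii) to $1<p<\infty$ and $1\le p<\infty$ respectively, are handled correctly.
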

It follows from this proposition that $H^1(a,b)$ is a separable Hilbert space with the inner product
$$(u,v)_{H^1}:= (u,v)_{L^2} + (u',v')_{L^2} = \int_a^b (uv + u'v')$$
and induced norm
$$\|u\|_{H^1} = (\|u\|^2_{2} + \|u'\|^2_{2})^{1/2}.$$

In general, there is no requirement that members of a Sobolev space be continuous functions. This is especially true for Sobolev spaces of functions on higher-dimensional domains, which will be defined later in Appendix \ref{sec:multidimsobolev}. However, in the one-dimensional case, a function $u \in W^{1,p}(a,b)$ has a \emph{continuous representative} $\tilde{u}$, which is described below in Theorem \ref{thmcontrep}. The existence of such a continuous representative can be rather useful, especially for problems where continuity is necessary. In most cases, the function $\tilde{u}$ is identified with $u$ and no distinction is made between them in notation.

\begin{theorem}\label{thmcontrep}
Suppose $u \in W^{1,p}(a,b)$. Then, there exists a function \mbox{$\tilde{u} \in C([a,b])$} such that $u = \tilde{u}$ a.e. on $(a,b)$ and $$
\tilde{u}(x) - \tilde{u}(y) = \int_y^x u'(t) \ \mathrm{d}t
$$ for all $x, y \in [a,b]$.
\end{theorem}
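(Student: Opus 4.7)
The plan is to construct $\tilde{u}$ as an antiderivative of the weak derivative $u'$, shifted by an appropriate constant so that it agrees with $u$ almost everywhere. First I would fix a basepoint $y_0 \in (a,b)$ and define
$$v(x) := \int_{y_0}^{x} u'(t) \ \mathrm{d}t, \qquad x \in (a,b).$$
Since $u' \in L^p(a,b)$, H\"older's inequality gives $u' \in L^1_{\loc}(a,b)$, so $v$ is well-defined, and the absolute continuity of the Lebesgue integral shows that $v$ is continuous on $(a,b)$.

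Next, I would verify that $v$ possesses a weak derivative and that $v' = u'$ in the weak sense. For any $\varphi \in C_c^{\infty}(a,b)$, splitting the defining integral according to whether $x > y_0$ or $x < y_0$ and applying Fubini's theorem yields
$$\int_a^b v(x) \varphi'(x) \ \mathrm{d}x = \int_{y_0}^b u'(t) \int_t^b \varphi'(x) \ \mathrm{d}x \ \mathrm{d}t - \int_a^{y_0} u'(t) \int_a^t \varphi'(x) \ \mathrm{d}x \ \mathrm{d}t.$$
Since $\varphi$ vanishes outside a compact subset of $(a,b)$, the inner integrals equal $-\varphi(t)$ and $\varphi(t)$ respectively, and the right-hand side collapses to $-\int_a^b u'(t)\varphi(t) \ \mathrm{d}t$. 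Consequently $u - v$ has zero weak derivative on $(a,b)$.

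The main obstacle is to deduce from this that $u - v$ is constant almost everywhere, which requires the following lemma: \emph{if $w \in L^1_{\loc}(a,b)$ satisfies $\int_a^b w\varphi' = 0$ for all $\varphi \in C_c^{\infty}(a,b)$, then $w$ is a.e.\ constant.} To prove it, fix some $\psi_0 \in C_c^{\infty}(a,b)$ with $\int_a^b \psi_0 = 1$. For an arbitrary $\phi \in C_c^{\infty}(a,b)$, the function $\phi - \big(\int_a^b \phi\big)\psi_0$ has zero integral and compact support in $(a,b)$, hence it equals $\varphi'$ for the $C_c^{\infty}$ function $\varphi(x):=\int_a^x \big(\phi - (\int_a^b\phi)\psi_0\big)$. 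Applying the hypothesis to this $\varphi$ and using Lemma \ref{fundlemma} forces $w$ to coincide a.e.\ with the constant $\int_a^b w\psi_0$.

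Applying this lemma to $w = u - v$ produces a constant $C$ such that $u = v + C$ almost everywhere. Setting $\tilde{u} := v + C$ gives a continuous function on $(a,b)$ with $u = \tilde{u}$ a.e., and the identity $\tilde{u}(x) - \tilde{u}(y) = \int_y^x u'(t) \ \mathrm{d}t$ is immediate from the definition of $v$. To obtain continuity on the closed interval $[a,b]$ in the bounded case, I would observe that $u' \in L^p(a,b) \subseteq L^1(a,b)$ by H\"older, so the right-hand side of the displayed identity extends continuously to $x, y \in [a,b]$; this both defines $\tilde{u}$ at the endpoints and yields the integral identity on all of $[a,b]$. In the unbounded case, the same argument applies locally and gives $\tilde{u} \in C(\overline{(a,b)})$ in the natural extended sense.
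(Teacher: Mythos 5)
Your proposal follows essentially the same route as the paper, which (citing Brezis) explicitly lists Lemmas \ref{82lemma1} and \ref{82lemma2} as the ingredients: you define the primitive $v$ of $u'$, verify its continuity and the identity $\int v\varphi' = -\int u'\varphi$ via Fubini (this is Lemma \ref{82lemma2}), re-prove the ``zero weak derivative implies a.e.\ constant'' fact by the standard $\psi_0$-normalisation trick reducing to Lemma \ref{fundlemma} (this is Lemma \ref{82lemma1}), and then set $\tilde u = v + C$. The argument is correct; the only places worth a slightly more careful word are (a) that the primitive $\varphi(x):=\int_a^x(\phi - (\int\phi)\psi_0)$ really lies in $C_c^\infty(a,b)$ because the integrand has zero total integral and compact support, which you should state, and (b) the treatment of the unbounded interval in the final sentence is terse -- there $L^p(a,b)\not\subseteq L^1(a,b)$ in general, so continuity up to a finite endpoint should be argued via $u'\in L^1$ near that endpoint (by H\"older on a bounded neighbourhood), while an infinite endpoint simply doesn't belong to $\overline{(a,b)}$ and needs no extension.
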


The proof of this theorem, as found in \cite{Brezis}, makes use of the following two lemmas.

\begin{lemma}\label{82lemma1}
Let $f \in L^1_{\loc} (a,b)$ be such that
\begin{equation*}
    \int_a^b f \varphi' = 0
\end{equation*} for all  $\varphi \in C_c^\infty (a,b)$. Then there exists a constant $C$ such that $f=C$ a.e. on $(a,b)$.
\end{lemma}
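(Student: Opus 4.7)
The plan is to reduce the statement to an application of the Fundamental Lemma of Calculus of Variations (Lemma \ref{fundlemma}). To conclude that $f$ agrees a.e.\ with a constant $C$, it suffices to produce such a $C$ and to show that
\[
\int_a^b (f - C)\, \psi = 0 \qquad \text{for every } \psi \in C_c^\infty(a,b).
\]
The hypothesis tells us something about $\int_a^b f \varphi'$ rather than $\int_a^b f \psi$, so the main task is to relate arbitrary test functions to derivatives of test functions.

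First I would characterise which $\psi \in C_c^\infty(a,b)$ arise as $\varphi'$ with $\varphi \in C_c^\infty(a,b)$. Given such a $\psi$, the primitive $\varphi(x) := \int_a^x \psi(t)\, \mathrm{d}t$ is smooth and vanishes for $x$ near $a$ (since $\supp \psi$ is a compact subset of $(a,b)$), and it vanishes for $x$ near $b$ precisely when $\int_a^b \psi = 0$. Hence $\psi = \varphi'$ for some $\varphi \in C_c^\infty(a,b)$ if and only if $\psi$ has mean zero.

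To cover arbitrary $\psi$, I would fix once and for all a function $\rho \in C_c^\infty(a,b)$ with $\int_a^b \rho = 1$ (obtained from a standard bump rescaled appropriately). For an arbitrary $\psi \in C_c^\infty(a,b)$ set
\[
h := \psi - \Bigl(\int_a^b \psi\Bigr)\rho \in C_c^\infty(a,b), \qquad \int_a^b h = 0,
\]
so $h = \varphi'$ for some $\varphi \in C_c^\infty(a,b)$ by the previous step. Applying the hypothesis to this $\varphi$ gives
\[
0 \;=\; \int_a^b f \varphi' \;=\; \int_a^b f h \;=\; \int_a^b f\psi \;-\; \Bigl(\int_a^b \psi\Bigr) \int_a^b f \rho.
\]
Setting $C := \int_a^b f \rho$, which is finite since $\rho$ has compact support in $(a,b)$ and $f \in L^1_{\loc}(a,b)$, this rearranges to $\int_a^b (f - C)\psi = 0$. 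Since $\psi$ was arbitrary, Lemma \ref{fundlemma} applied to $f - C \in L^1_{\loc}(a,b)$ yields $f = C$ a.e.\ on $(a,b)$.

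There is no real obstacle in this argument; the only conceptual content is the projection $\psi \mapsto \psi - (\int \psi)\rho$ onto the subspace of mean-zero test functions, which is exactly the subspace on which the hypothesis gives direct information. The constant $C$ is then forced to be $\int_a^b f \rho$, and one can check a posteriori that its value is independent of the choice of $\rho$.
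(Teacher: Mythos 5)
Your proof is correct and is essentially the standard argument found in Brezis, which is the source the paper defers to for this lemma. The key steps — characterising derivatives of test functions as the mean-zero test functions, projecting an arbitrary $\psi$ onto that subspace via a fixed $\rho$ with $\int \rho = 1$, and then invoking the fundamental lemma of the calculus of variations for $f - C$ with $C := \int f\rho$ — match that reference exactly.
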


\begin{lemma}\label{82lemma2}
Let $g \in L^1_{\loc}(a,b)$. For $y_0 \in (a,b)$ fixed, set
$$v(x):=\int_{y_0}^x g(t) \ \mathrm{d}t$$ for all $x \in (a,b)$. Then $v \in C(a,b)$ and $$\int_a^b v \varphi' = -\int_a^b g \varphi$$ for all  $\varphi \in C_c^\infty (a,b)$.
\end{lemma}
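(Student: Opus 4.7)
The proof divides naturally into two independent parts: establishing continuity of $v$ on $(a,b)$, and verifying the integration-by-parts identity against test functions. The first part is essentially a consequence of the absolute continuity of the Lebesgue integral, while the second reduces to a double-integral computation justified by Fubini's theorem.

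For the continuity of $v$, my plan is to fix any $x \in (a,b)$ and consider a sequence $x_n \to x$ lying in a compact subinterval $[c,d] \subset (a,b)$ containing $x$. Since $g \in L^1_{\loc}(a,b)$, we have $g \in L^1([c,d])$, so the finite measure $E \mapsto \int_E |g|$ on $[c,d]$ is absolutely continuous with respect to the Lebesgue measure. From the estimate $|v(x_n) - v(x)| = \left|\int_x^{x_n} g(t)\, \mathrm{d}t\right| \leq \int_{[\min\{x,x_n\}, \max\{x,x_n\}]} |g(t)|\, \mathrm{d}t$ and the fact that the length of this interval tends to zero, continuity follows.

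For the integration-by-parts identity, I would fix $\varphi \in C_c^\infty(a,b)$ with $\supp \varphi \subset [c,d] \subset (a,b)$, so in particular $\varphi$ vanishes near $a$ and $b$. The key step is to split $\int_a^b v(x) \varphi'(x)\, \mathrm{d}x$ according to whether $x > y_0$ or $x < y_0$ and rewrite the inner integral with the correct sign convention, obtaining
\begin{equation*}
\int_a^b v(x)\varphi'(x)\, \mathrm{d}x = \int_{y_0}^b \varphi'(x) \int_{y_0}^x g(t)\, \mathrm{d}t\, \mathrm{d}x - \int_a^{y_0} \varphi'(x) \int_x^{y_0} g(t)\, \mathrm{d}t\, \mathrm{d}x.
\end{equation*}
On each piece I would then apply Fubini's theorem to swap the order of integration, yielding
\begin{equation*}
\int_{y_0}^b g(t) \int_t^b \varphi'(x)\, \mathrm{d}x\, \mathrm{d}t - \int_a^{y_0} g(t) \int_a^t \varphi'(x)\, \mathrm{d}x\, \mathrm{d}t,
\end{equation*}
and evaluate the inner integrals using the fundamental theorem of calculus together with $\varphi(a) = \varphi(b) = 0$. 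This yields $-\int_{y_0}^b g(t)\varphi(t)\, \mathrm{d}t - \int_a^{y_0} g(t)\varphi(t)\, \mathrm{d}t = -\int_a^b g\varphi$, as required.

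The main technical point to verify is the applicability of Fubini's theorem. The triangular regions of integration are $\{(x,t) : y_0 < t < x,\ x \in \supp \varphi\}$ and $\{(x,t) : t < x < y_0,\ x \in \supp \varphi\}$, both contained in $[c,d] \times [c,d]$. On this bounded rectangle, $\varphi'$ is continuous and hence bounded by some $M$, while $g \in L^1([c,d])$, so the integrand $|\varphi'(x) g(t)|$ is dominated by $M|g(t)|$, which is integrable over $[c,d] \times [c,d]$. Thus Fubini applies and the computation is rigorous. The mildly delicate point to watch is the correct handling of the sign when $x < y_0$, since the convention $\int_{y_0}^x = -\int_x^{y_0}$ must be applied before exchanging the order of integration to obtain a non-degenerate region of integration.
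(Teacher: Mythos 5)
Your proof is correct, and the approach is essentially the standard one found in Brezis (which the paper cites for this lemma rather than proving it itself): split at $y_0$, apply Fubini on each triangular region, evaluate the inner integral of $\varphi'$ using the fundamental theorem of calculus and the vanishing of $\varphi$ near the endpoints. The continuity argument via absolute continuity of the indefinite Lebesgue integral is also the usual one. One small inaccuracy worth tidying up: you claim both triangular regions are contained in $[c,d]\times[c,d]$ where $[c,d] \supseteq \supp\varphi$, but this fails if $y_0 \notin [c,d]$, since $t$ then ranges into $[y_0,c]$ or $[d,y_0]$. The fix is trivial — enlarge $[c,d]$ to a compact subinterval of $(a,b)$ containing both $\supp\varphi$ and $y_0$ before invoking $g\in L^1([c,d])$ — but as written the justification for Fubini is not quite airtight. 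You correctly flag the sign-handling issue for $x<y_0$ as the one delicate step, and you handle it correctly.
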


Some familiar rules from classical calculus have analogues for Sobolev spaces.
\begin{theorem}
Let $u,v \in W^{1,p}(a,b)$ with $1 \leq p \leq \infty$. Then $uv \in W^{1,p}(a,b)$ with weak derivative given by $$(uv)' = u'v + uv'.$$ The following integration by parts formula also holds $$\int_x^y u'v = u(x)v(x) -u(y)v(y) - \int_y^x uv' \qquad \text{for all } x,y \in [a,b].$$
\end{theorem}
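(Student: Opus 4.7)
The plan is to first establish the product rule $(uv)' = u'v + uv'$ in the weak sense and then derive the integration by parts formula as a direct consequence, via the fundamental theorem of calculus already established in Theorem \ref{thmcontrep}.

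I begin by replacing $u$ and $v$ with their continuous representatives on $[a,b]$, as guaranteed by Theorem \ref{thmcontrep}. Since every function in $W^{1,p}(a,b)$ admits a bounded continuous representative on bounded intervals, and since in one dimension there is a Sobolev embedding $W^{1,p}(a,b) \hookrightarrow L^\infty_{\loc}(a,b)$, I may treat $u$ and $v$ as continuous and locally bounded. This ensures that both the candidate product $uv$ and the candidate derivative $g := u'v + uv'$ lie in $L^p(a,b)$: indeed, $\|uv\|_p \leq \|u\|_\infty \|v\|_p$ and similarly for $g$, using the H\"older-type bound $\|u'v\|_p \leq \|v\|_\infty \|u'\|_p$ (and analogously for $uv'$); in the case that $(a,b)$ is unbounded, this argument is carried out locally combined with the global $L^p$ control inherited from the hypothesis.

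The core step is to verify, for every test function $\varphi \in C_c^{\infty}(a,b)$, the identity
\begin{equation*}
   \int_a^b uv\,\varphi' = -\int_a^b (u'v + uv')\,\varphi.
\end{equation*}
I would do this by mollification: let $\rho_\varepsilon$ be a standard mollifier and set $u_\varepsilon := \rho_\varepsilon * u$ and $v_\varepsilon := \rho_\varepsilon * v$, where $u,v$ have been extended (e.g.\ by reflection or zero extension) so that the mollification is well-defined on a neighbourhood of $\supp \varphi$. The functions $u_\varepsilon, v_\varepsilon$ are smooth and satisfy the classical product rule $(u_\varepsilon v_\varepsilon)' = u_\varepsilon' v_\varepsilon + u_\varepsilon v_\varepsilon'$, so integration by parts on $\R$ against $\varphi$ yields
\begin{equation*}
   \int_a^b u_\varepsilon v_\varepsilon\,\varphi' = -\int_a^b (u_\varepsilon' v_\varepsilon + u_\varepsilon v_\varepsilon')\,\varphi.
\end{equation*}
Standard properties of mollifiers give $u_\varepsilon \to u$ and $u_\varepsilon' \to u'$ in $L^p_{\loc}$ (for $1 \leq p < \infty$), together with $u_\varepsilon \to u$ uniformly on $\supp \varphi$ by continuity of $u$, and likewise for $v$. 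Passing to the limit $\varepsilon \to 0^+$ in each integral, using the uniform $L^\infty$ bound on $\supp \varphi$ to control the product terms, yields the desired identity and hence $uv \in W^{1,p}(a,b)$ with $(uv)' = u'v + uv'$. The case $p = \infty$ is handled separately by observing that $W^{1,\infty}(a,b)$ coincides with the space of Lipschitz functions, for which the product rule follows directly from classical differentiation results.

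Finally, with the product rule established, the integration by parts formula is immediate from Theorem \ref{thmcontrep} applied to $uv \in W^{1,p}(a,b)$: for any $x, y \in [a,b]$,
\begin{equation*}
   u(x)v(x) - u(y)v(y) = \int_y^x (uv)'(t)\,\mathrm{d}t = \int_y^x u'v\,\mathrm{d}t + \int_y^x uv'\,\mathrm{d}t,
\end{equation*}
which rearranges to the stated identity. I expect the main obstacle to be the mollification step, specifically ensuring that the extensions of $u, v$ past $(a,b)$ are compatible with $W^{1,p}$-convergence on a neighbourhood of $\supp\varphi$, and handling the $p = \infty$ case where mollifiers do not converge in norm and a separate Lipschitz-based argument is required.
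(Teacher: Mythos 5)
The paper itself offers no proof of this theorem: it appears in Appendix~\ref{sobolev} as a result quoted directly from Brezis, so there is no in-text argument to compare yours against. Your mollification argument is correct, and it is essentially the same strategy used in Brezis to prove the corresponding Corollary: approximate $u$ and $v$ by smooth functions, use the classical product rule, and pass to the limit in the weak formulation. Two small remarks. First, the separate treatment of $p=\infty$ is unnecessary: to verify $\int uv\varphi' = -\int (u'v+uv')\varphi$ you only need $u_\varepsilon'\to u'$ and $v_\varepsilon'\to v'$ in $L^1_{\loc}$, which holds for every $1\le p\le\infty$ since $L^p_{\loc}\subset L^1_{\loc}$ on compact sets; the uniform convergence of $u_\varepsilon,v_\varepsilon$ on $\supp\varphi$ (from the continuity of the representatives) then handles the product terms exactly as you did for $p<\infty$. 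Second, your derivation from Theorem~\ref{thmcontrep} correctly yields
\begin{equation*}
\int_y^x u'v = u(x)v(x) - u(y)v(y) - \int_y^x uv',
\end{equation*}
with $\int_y^x$ appearing on \emph{both} sides. The theorem as transcribed in the paper has $\int_x^y u'v$ on the left but $\int_y^x uv'$ on the right, which is inconsistent (it differs from the above by a sign); this appears to be a typographical slip in the paper's statement of Brezis's result, and your version is the correct one. So the closing sentence ``which rearranges to the stated identity'' should instead flag that discrepancy rather than assert exact agreement.
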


We now define $W^{1,p}_0(a,b)$ to be the closure of $C_c^{\infty}(a,b)$ in $W^{1,p}(a,b)$ for $1 \leq p < \infty$. We also denote the space $W^{1,2}_0(a,b)$ by $H^1_0(a,b)$. The spaces $W^{1,p}_0(a,b)$ enjoy many of the same fundamental properties as the spaces $W^{1,p}(a,b)$, specifically completeness and separability for $p\geq 1$ and reflexivity for $p > 1$. 

The space $W^{1,p}_0(a,b)$ can be understood via the following characterisation.

\begin{theorem}\label{w0char}
Let $u \in W^{1,p}(a,b)$. Then $u \in W^{1,p}_0(a,b)$ if and only if \newline $u(a)=u(b)=0$.
\end{theorem}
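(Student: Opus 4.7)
The plan is to establish both implications separately, identifying $u$ throughout with the continuous representative furnished by Theorem \ref{thmcontrep}. For the forward implication, I would choose $\varphi_n \in C_c^\infty(a,b)$ converging to $u$ in $W^{1,p}(a,b)$; each $\varphi_n$ extends continuously to $[a,b]$ with $\varphi_n(a) = \varphi_n(b) = 0$. Applying H\"older's inequality to the identity $\varphi_n(x) - \varphi_m(x) = \int_a^x (\varphi_n' - \varphi_m')(t)\,\mathrm{d}t$ gives
\[
\|\varphi_n - \varphi_m\|_{C([a,b])} \leq (b-a)^{1/p'}\|\varphi_n' - \varphi_m'\|_p,
\]
so $(\varphi_n)$ is Cauchy in $C([a,b])$. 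Its uniform limit is continuous and must coincide almost everywhere with $u$, hence is the continuous representative of $u$. Passing to the limit in $\varphi_n(a) = \varphi_n(b) = 0$ yields $u(a) = u(b) = 0$.

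For the converse, assume $u(a) = u(b) = 0$ and I would build $C_c^\infty(a,b)$ approximants of $u$ in two steps. First, truncate with cutoffs $\eta_n \in C_c^\infty(a,b)$ satisfying $\eta_n \equiv 1$ on $[a+1/n,\, b-1/n]$, $\supp \eta_n \subset (a+1/(2n),\, b-1/(2n))$, and $\|\eta_n'\|_\infty \leq Cn$. Then $\eta_n u$ has compact support in $(a,b)$, and dominated convergence immediately gives $\eta_n u \to u$ and $\eta_n u' \to u'$ in $L^p(a,b)$. The delicate point, and the principal obstacle of the whole proof, is the product-rule term $\eta_n' u$: since $|\eta_n'|$ blows up like $n$ near the boundary, one needs the vanishing boundary values of $u$ to compensate.

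This is exactly where $u(a) = 0$ enters. Writing $u(x) = \int_a^x u'(t)\,\mathrm{d}t$ and applying H\"older's inequality,
\[
|u(x)|^p \leq (x-a)^{p-1}\int_a^x |u'|^p\,\mathrm{d}t \leq (x-a)^{p-1}\varepsilon_n \qquad \text{for } x \in [a, a+1/n],
\]
where $\varepsilon_n := \int_a^{a+1/n} |u'|^p\,\mathrm{d}t \to 0$ by the absolute continuity of the Lebesgue integral. Combined with $|\eta_n'| \leq Cn$, this produces the Hardy-type bound
\[
\int_a^{a+1/n}|\eta_n' u|^p\,\mathrm{d}x \leq C^p n^p \varepsilon_n \int_a^{a+1/n}(x-a)^{p-1}\,\mathrm{d}x = \frac{C^p}{p}\,\varepsilon_n \longrightarrow 0,
\]
and an identical estimate near $b$ uses $u(b) = 0$. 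Hence $(\eta_n u)' = \eta_n' u + \eta_n u' \to u'$ in $L^p(a,b)$, so $\eta_n u \to u$ in $W^{1,p}(a,b)$.

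Finally, each $\eta_n u$ has compact support inside $(a,b)$, so extending by zero to $\R$ and convolving with a standard mollifier $\rho_\delta$ of radius $\delta$ small enough to preserve this support produces a sequence in $C_c^\infty(a,b)$ converging to $\eta_n u$ in $W^{1,p}$. A diagonal extraction then yields $\varphi_n \in C_c^\infty(a,b)$ with $\varphi_n \to u$ in $W^{1,p}(a,b)$, and by definition $u \in W^{1,p}_0(a,b)$.
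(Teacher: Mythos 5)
Your argument is correct, and both implications are handled carefully. Worth noting: the paper itself does not prove this theorem --- it appears in the appendix as a standing fact cited to Brezis --- so there is no in-paper proof to compare against, and the appropriate comparison is with the standard textbook argument.

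The forward implication you give (uniform Cauchy estimate from the fundamental theorem of calculus plus H\"older, then passing to the limit in the boundary values) is essentially the same as the standard one, which invokes the continuous embedding $W^{1,p}(a,b) \hookrightarrow C([a,b])$ directly; your version is self-contained and spells out the Morrey-type estimate rather than quoting the embedding.

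For the converse, your route differs from the one in Brezis. There, one composes with a fixed smooth truncation $G$ satisfying $G(t)=0$ for $|t|\leq 1$, $G(t)=t$ for $|t|\geq 2$, $|G(t)|\leq |t|$, and sets $u_n := \tfrac{1}{n}G(nu)$; continuity of $u$ with $u(a)=u(b)=0$ forces $u_n$ to have compact support, $u_n \to u$ in $W^{1,p}$ follows by dominated convergence, and one then mollifies. Your approach instead multiplies by a linear cutoff $\eta_n$ and controls the problematic term $\eta_n' u$ by a Hardy-type inequality, using $u(a)=u(b)=0$ to write $u(x)=\int_a^x u'$ and cancel the $n^p$ blow-up against the $(x-a)^{p-1}$ decay. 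Both are standard; the truncation-in-the-range argument avoids any Hardy estimate and works verbatim in higher dimensions with Lipschitz boundary, while your cutoff-in-the-domain argument is more explicit about exactly how the boundary condition is used and makes the quantitative mechanism visible. Either is a valid proof, and yours is complete.
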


\section{The Sobolev Space $W^{1,p}(\Omega)$ }\label{sec:multidimsobolev}
Let $\Omega \subseteq \R^d$ be an open set and $1 \leq p \leq \infty$. The Sobolev space $W^{1,p}(\Omega)$ is defined as the set of functions $u \in L^p(\Omega)$ for which there exist functions $g_1,g_2,\ldots,g_d \in L^p (\Omega)$ such that
$$\int_{\Omega}u \frac{\partial \varphi}{\partial x_i} = -\int_{\Omega}g_i\varphi$$
for all $\varphi \in C_c^{\infty}(\Omega)$ and for all $i=1,2,\ldots,d$. For a function $u\in W^{1,p}(\Omega)$, we write $\frac{\partial u}{\partial x_i}:=g_i$ and $$\nabla u := \left( \frac{\partial u}{\partial x_1},\frac{\partial u}{\partial x_2},\ldots,\frac{\partial u}{\partial x_d} \right) \in (L^p(\Omega))^d.$$

Some of the basic properties of $W^{1,p}(\Omega)$ are as follows.

\begin{proposition}
The space $W^{1,p}(\Omega)$ satisfies the following:
\begin{enumerate}[label=(\roman*)]
    \item For $1 \leq p \leq \infty$, $W^{1,p}(\Omega)$ with the norm $$\|u\|_{W^{1,p}}:= \|u\|_{p} + \sum_{i=1}^d\left\|\frac{\partial u}{\partial x_i}\right\|_{p}$$ is a Banach space. Alternatively, one may also define the norm by $\|u\|_{W^{1,p}}:= (\|u\|_{p}^p + \sum_{i=1}^d\|\frac{\partial u}{\partial x_i}\|_{p}^p)^{1/p}$ and the result still holds.
    \item $W^{1,p}(\Omega)$ is reflexive for $1 < p < \infty$.
    \item $W^{1,p}(\Omega)$ is separable for $1 \leq p < \infty$.
\end{enumerate}
\end{proposition}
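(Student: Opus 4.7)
The plan is to follow the standard strategy of realising $W^{1,p}(\Omega)$ as a closed subspace of a direct sum of copies of $L^p(\Omega)$, and then to transfer the relevant Banach space properties through this embedding.

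For part (i), I would first observe that the two proposed norms are equivalent: by the elementary inequality relating the $\ell^1$ and $\ell^p$ norms on $\R^{d+1}$, one has $c_1 \|u\|_{W^{1,p}} \leq (\|u\|_p^p + \sum_i \|\partial u/\partial x_i\|_p^p)^{1/p} \leq c_2 \|u\|_{W^{1,p}}$, so it suffices to prove completeness with respect to one of them. Taking a Cauchy sequence $(u_n)$ in $W^{1,p}(\Omega)$, the definition of the norm ensures that $(u_n)$ and each $(\partial u_n/\partial x_i)$ are Cauchy in $L^p(\Omega)$. Since $L^p(\Omega)$ is a Banach space, there exist $u, g_1, \ldots, g_d \in L^p(\Omega)$ with $u_n \to u$ and $\partial u_n/\partial x_i \to g_i$ in $L^p$. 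To finish, I would pass to the limit in the weak-derivative identity: for any test function $\varphi \in C_c^\infty(\Omega)$,
\begin{equation*}
\int_\Omega u_n \, \frac{\partial \varphi}{\partial x_i} = -\int_\Omega \frac{\partial u_n}{\partial x_i} \, \varphi,
\end{equation*}
and H\"older's inequality allows both sides to pass to the limit, yielding $g_i = \partial u/\partial x_i$ in the weak sense. Hence $u \in W^{1,p}(\Omega)$ and $u_n \to u$ in $W^{1,p}(\Omega)$.

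For parts (ii) and (iii), I would use the embedding $T: W^{1,p}(\Omega) \to (L^p(\Omega))^{d+1}$ defined by $Tu := (u, \partial u/\partial x_1, \ldots, \partial u/\partial x_d)$, equipping the codomain with the natural product norm $\|(v_0, \ldots, v_d)\| := (\sum_{i=0}^d \|v_i\|_p^p)^{1/p}$. By the equivalence of norms established above, $T$ is an isometry onto its image when $W^{1,p}(\Omega)$ is given the $p$-form of the norm. The key point is then that $T(W^{1,p}(\Omega))$ is closed in $(L^p(\Omega))^{d+1}$: if $(u_n, \partial u_n/\partial x_1, \ldots, \partial u_n/\partial x_d) \to (v_0, v_1, \ldots, v_d)$ in the product space, then exactly the same weak-derivative limiting argument as in part (i) shows $v_i = \partial v_0/\partial x_i$, so the limit lies in the image of $T$. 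Consequently, $W^{1,p}(\Omega)$ is isometrically isomorphic to a closed subspace of $(L^p(\Omega))^{d+1}$.

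From this embedding, (ii) follows because $(L^p(\Omega))^{d+1}$ is reflexive for $1 < p < \infty$ (as a finite product of reflexive spaces) and closed subspaces of reflexive Banach spaces are reflexive. Similarly, (iii) follows because $(L^p(\Omega))^{d+1}$ is separable for $1 \leq p < \infty$ and subsets of separable metric spaces are separable. The main technical point, and the only place where any real work is done, is verifying that the image $T(W^{1,p}(\Omega))$ is closed; this rests on the stability of the weak derivative under $L^p$-convergence, which is a direct consequence of H\"older's inequality applied to the defining integration-by-parts identity. Everything else is bookkeeping about inherited properties of Banach spaces.
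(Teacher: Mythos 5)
Your proposal is correct and follows the standard textbook route (this is exactly the argument in Brezis, which the paper cites as the source of this proposition and does not itself reprove). The paper simply states this result in the appendix without proof; your identification of $W^{1,p}(\Omega)$ with a closed subspace of $(L^p(\Omega))^{d+1}$ via $Tu := (u, \partial u/\partial x_1, \ldots, \partial u/\partial x_d)$, together with the passage to the limit in the integration-by-parts identity to show closedness, is the canonical way to establish all three parts, so there is nothing to compare against.
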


We also define the space $W^{1,p}_0(\Omega)$ as the closure of the test functions $C_c^{\infty}(\Omega)$ in $W^{1,p}(\Omega)$.

\chapter{Minimisation of Convex Functionals}\label{sec:convexmin}
Let $V$ be a vector space and $E:V\rightarrow ( -\infty, + \infty]$. The functional $E$ is called \emph{convex} if for all $x,y \in V$ and $\lambda \in [0,1]$,
$$E(\lambda x + (1-\lambda)y) \leq \lambda E(x) + (1-\lambda) E(y).$$
If the inequality is strict for $\lambda \in (0,1)$, then $E$ is called \emph{strictly convex}.

We are interested in solving minimisation problems of the form
$$\min_{x \in V} E(x),$$
where $(V, \|\cdot\|_V)$ is a reflexive Banach space and $E:V\rightarrow ( -\infty, + \infty]$ is a convex functional. To this end, we briefly discuss some other assumptions that must be placed on the functional $E$ to ensure the existence of a minimum, namely coercivity and lower semicontinuity. These results shall be taken from \cite{Attouch}.

A functional $E:V \rightarrow ( -\infty, + \infty)$ is called \emph{coercive} if $$\lim_{\|x\|_{V}\rightarrow \infty} E(x) = +\infty.$$ 

\begin{proposition}\label{coercivebdd}
Let $V$ be a normed vector space and $E:V \rightarrow ( -\infty, + \infty]$. Then $E$ is coercive if and only if for every $c \in \R$, the \emph{sublevel set}
$$E_c := \{x \in V \mid E(x) \leq c\}$$ is bounded.
\end{proposition}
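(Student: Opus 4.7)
The plan is to establish both implications by contrapositive, extracting a sequence that witnesses the failure of one property and shows it forces the failure of the other. The entire argument reduces to unwinding the definition of the limit $\lim_{\|x\|_V \to \infty} E(x) = +\infty$, namely that for every $M \in \R$ there exists $R > 0$ such that $\|x\|_V > R$ implies $E(x) > M$.

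For the forward direction, I assume $E$ is coercive and fix $c \in \R$. To see that $E_c = \{x \in V \mid E(x) \leq c\}$ is bounded, I argue by contradiction: if $E_c$ were unbounded in $V$, I could choose a sequence $(x_n) \subseteq E_c$ with $\|x_n\|_V \to \infty$. Coercivity then forces $E(x_n) \to +\infty$, directly contradicting the standing bound $E(x_n) \leq c$ for all $n$.

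For the converse, assume that every sublevel set $E_c$ is bounded, and suppose towards a contradiction that $E$ fails to be coercive. Negating the definition of the limit yields a threshold $M_0 \in \R$ and, for each $n \in \N$, a point $x_n \in V$ with $\|x_n\|_V > n$ and $E(x_n) \leq M_0$. Then the sequence $(x_n)$ lies entirely inside $E_{M_0}$, while $\|x_n\|_V \to \infty$ shows $E_{M_0}$ is unbounded, contradicting the hypothesis. Hence $E$ must be coercive.

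The argument contains no genuine analytical obstacle; both directions are a routine manipulation of quantifiers. The only point requiring care is to state the definition of the divergent limit explicitly before taking its contrapositive, so that the extracted sequences really do contradict the assumed boundedness or coercivity, respectively.
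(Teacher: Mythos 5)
Your proof is correct, and since the paper states Proposition~\ref{coercivebdd} without proof (citing \cite{Attouch}), there is no in-text argument to compare against; your contradiction-based unwinding of the quantifiers in the definition of $\lim_{\|x\|_V\to\infty}E(x)=+\infty$ is exactly the standard argument one would expect. Both directions are handled cleanly, and the extraction of a sequence $(x_n)$ with $\|x_n\|_V>n$ in the converse direction correctly negates coercivity and exhibits the unboundedness of $E_{M_0}$.
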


A functional $E:V \rightarrow ( -\infty, + \infty]$ is called  \emph{(sequentially) lower semicontinuous} on $V$ if for all $x \in V$ and for all sequences $(x_n)_n$ in $V$ such that $x_n \rightarrow x$ as $n\rightarrow \infty$, one has that $$E(x) \leq \liminf_{n \rightarrow \infty} E(x_n).$$

\begin{proposition}\label{weaklylsc}
Let $E:V \rightarrow ( -\infty, + \infty]$ be a proper convex lower semicontinuous functional. Then $E$ is weakly lower semicontinuous on $V$, that is, if $x_n \rightharpoonup x$ in $V$ as $n\rightarrow \infty$, then $$E(x) \leq \liminf_{n \rightarrow \infty} E(x_n).$$
\end{proposition}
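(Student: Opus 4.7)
The plan is to reduce weak lower semicontinuity to the statement that every sublevel set $E_c := \{x \in V \mid E(x) \leq c\}$ is weakly closed. First I would verify that each sublevel set $E_c$ is convex (immediate from convexity of $E$, since $E(\lambda x + (1-\lambda)y) \leq \lambda c + (1-\lambda)c = c$ whenever $x,y \in E_c$) and strongly closed (immediate from lower semicontinuity: if $x_n \to x$ with $x_n \in E_c$, then $E(x) \leq \liminf E(x_n) \leq c$).

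The key step is then to invoke Mazur's theorem, a consequence of the geometric Hahn--Banach separation theorem, which asserts that in a normed vector space every strongly closed convex set is weakly closed. Applied to each sublevel set $E_c$, this gives that $E_c$ is weakly sequentially closed: if $x_n \rightharpoonup x$ and $x_n \in E_c$ for all $n$, then $x \in E_c$.

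With this in hand, suppose $x_n \rightharpoonup x$ in $V$ and set $\ell := \liminf_{n \to \infty} E(x_n)$. If $\ell = +\infty$ the conclusion is trivial, so assume $\ell < +\infty$ and fix any $c > \ell$. By definition of the liminf, there is a subsequence $(x_{n_k})$ with $E(x_{n_k}) \leq c$ for all sufficiently large $k$, i.e.\ $x_{n_k} \in E_c$. Since $x_{n_k} \rightharpoonup x$ and $E_c$ is weakly sequentially closed, we deduce $x \in E_c$, so $E(x) \leq c$. Letting $c \downarrow \ell$ yields $E(x) \leq \ell = \liminf_{n \to \infty} E(x_n)$, which is precisely the weak lower semicontinuity claim.

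The main obstacle is the invocation of Mazur's theorem, since this is the nontrivial step relying on Hahn--Banach; if the appendix does not already make this available, I would either cite it explicitly from \cite{Brezis} or give a short separation-theorem argument: any point $y \notin E_c$ can be strictly separated from the closed convex set $E_c$ by a continuous linear functional $f \in V'$, producing a weakly open neighbourhood of $y$ disjoint from $E_c$, which shows $V \setminus E_c$ is weakly open and hence $E_c$ weakly closed. Everything else is essentially bookkeeping with the definition of liminf.
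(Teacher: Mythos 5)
Your proof is correct and complete: reducing weak lower semicontinuity to the weak closedness of the sublevel sets, invoking Mazur's theorem (strongly closed convex $\Rightarrow$ weakly closed), and then running the standard $\liminf$ bookkeeping is precisely the classical argument for this result. The paper itself states Proposition~\ref{weaklylsc} without proof, citing \cite{Attouch} for the background material in this appendix, so there is no alternative in-paper argument to compare against; your write-up supplies the expected proof.
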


More general minimisation principles often rely on the space $V$ having some sense of compactness. In our context, $V$ will be a reflexive Banach space, so this requirement is handled by the following result.

\begin{proposition}\label{bddseqref}
Every bounded sequence in a reflexive space $V$ contains a weakly convergent subsequence.
\end{proposition}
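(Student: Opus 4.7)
The plan is to reduce the problem to a separable reflexive subspace where the weak topology on bounded sets is metrizable, and then extract a convergent subsequence via the Banach--Alaoglu theorem together with reflexivity.

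First, I would let $(x_n)$ be a bounded sequence in the reflexive Banach space $V$ and restrict attention to the closed subspace $W := \overline{\mathrm{span}\{x_n : n \geq 1\}}$. By construction $W$ is separable, and a standard argument shows that closed subspaces of reflexive spaces are reflexive: if $J_V : V \to V^{**}$ is the canonical isometric isomorphism and $\varphi \in W^{**}$, then composing with the restriction map $V^* \to W^*$ produces an element of $V^{**}$, whose preimage under $J_V$ lies in $W$ by Hahn--Banach (any functional vanishing on $W$ must vanish on the preimage). Hence $W$ is a reflexive, separable Banach space.

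Next, since $W$ is separable and reflexive, $W^*$ is also separable, because the dual of a separable reflexive space is separable (one applies the same principle as for showing separability of duals when the bidual is separable). By the Banach--Alaoglu theorem, the closed unit ball of $(W^*)^* = W^{**}$ is weak-$*$ compact, and because $W^*$ is separable, this ball equipped with the weak-$*$ topology is metrizable. The bounded sequence $(J_W x_n)$ sits in a suitable scalar multiple of this ball, so by sequential compactness in the metrizable weak-$*$ topology it admits a subsequence $(J_W x_{n_k})$ converging weak-$*$ to some $J_W x \in W^{**}$, where $x \in W$ by reflexivity. This convergence means precisely that $x_{n_k} \rightharpoonup x$ in $W$.

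Finally, I would transfer the weak convergence from $W$ to $V$: given any $\Lambda \in V^*$, the restriction $\Lambda|_W$ belongs to $W^*$, so $\Lambda(x_{n_k}) = (\Lambda|_W)(x_{n_k}) \to (\Lambda|_W)(x) = \Lambda(x)$, which is exactly $x_{n_k} \rightharpoonup x$ in $V$. The main obstacle in this plan is the metrizability assertion: the fact that the weak-$*$ topology on bounded subsets of $W^{**}$ is metrizable when $W^*$ is separable requires an explicit construction of a metric from a countable dense subset of $W^*$, and this is where the bulk of the technical work lies. The remaining steps (reflexivity passing to closed subspaces, separability of the dual, and extension of weak convergence via Hahn--Banach) are standard functional-analytic facts that can be cited without detailed proof.
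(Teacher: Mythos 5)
The paper states this result without proof, citing it directly from Attouch, Buttazzo, and Michaille; there is no in-paper argument to compare against. Your proposal is the standard and correct proof: restrict to the separable closed span of the sequence, use that closed subspaces of reflexive spaces are reflexive, deduce separability of the dual from separability of the bidual, apply Banach--Alaoglu together with metrizability of the weak-$*$ topology on bounded subsets of $W^{**}$ when $W^*$ is separable, identify the weak-$*$ limit with an element of $W$ by reflexivity, and finally transfer weak convergence from $W$ to $V$ by restriction of functionals.
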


We are now ready to state the main theorem of this section.

\begin{theorem}\label{convexminthm}
Suppose that $(V, \|\cdot\|_V)$ is a reflexive Banach space and let $E:V \rightarrow ( -\infty, + \infty)$ be a proper convex, lower semicontinuous and coercive functional. Then there exists $x_0 \in V$ such that $$E(x_0) = \min_{x \in V} E(x).$$ Moreover, if $E$ is strictly convex, then the minimiser $x_0$ is unique.
\end{theorem}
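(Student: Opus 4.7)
The plan is to apply the direct method of the calculus of variations. First I would set $m := \inf_{x \in V} E(x)$ and note that $m < +\infty$ because $E$ is proper (so $E$ attains some finite value). Then I would select a minimising sequence $(x_n)_{n \geq 1} \subset V$ satisfying $E(x_n) \to m$ as $n \to \infty$. For $n$ sufficiently large, $E(x_n) \leq m+1$, so $(x_n)$ lies eventually in the sublevel set $E_{m+1} = \{x \in V \mid E(x) \leq m+1\}$. Coercivity of $E$ and Proposition \ref{coercivebdd} guarantee that $E_{m+1}$ is bounded in $V$, hence $(x_n)$ is a bounded sequence.

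Next I would invoke reflexivity of $V$: by Proposition \ref{bddseqref}, the bounded sequence $(x_n)$ admits a subsequence $(x_{n_k})$ that converges weakly to some limit $x_0 \in V$. Since $E$ is proper, convex, and lower semicontinuous, Proposition \ref{weaklylsc} yields that $E$ is weakly lower semicontinuous, and therefore
\begin{equation*}
E(x_0) \leq \liminf_{k \to \infty} E(x_{n_k}) = \lim_{k \to \infty} E(x_{n_k}) = m.
\end{equation*}
On the other hand, by the definition of the infimum $m \leq E(x_0)$, which forces $E(x_0) = m$. In particular, this establishes $m > -\infty$ and shows that $x_0$ is a minimiser of $E$ on $V$.

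For the uniqueness assertion under strict convexity, I would argue by contradiction. Suppose $x_0$ and $\tilde{x}_0$ are two distinct minimisers, so that $E(x_0) = E(\tilde{x}_0) = m$. By strict convexity applied with $\lambda = \tfrac{1}{2}$,
\begin{equation*}
E\bigl(\tfrac{1}{2} x_0 + \tfrac{1}{2} \tilde{x}_0\bigr) < \tfrac{1}{2} E(x_0) + \tfrac{1}{2} E(\tilde{x}_0) = m,
\end{equation*}
contradicting the definition of $m$ as the infimum of $E$. Hence the minimiser must be unique.

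The main subtlety in this argument, and the only place where each hypothesis pulls its weight, is the passage from weak convergence of the subsequence to the inequality $E(x_0) \leq \liminf_k E(x_{n_k})$: weak convergence alone does not give $\lim_k E(x_{n_k}) = E(x_0)$, and one genuinely needs the combination of convexity and (norm) lower semicontinuity to upgrade ordinary lower semicontinuity to weak lower semicontinuity via Proposition \ref{weaklylsc}. The remaining ingredients (coercivity to extract boundedness, reflexivity to extract a weakly convergent subsequence, strict convexity to upgrade existence to uniqueness) each enter at exactly one step and combine cleanly to close the argument.
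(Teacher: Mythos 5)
The paper states Theorem \ref{convexminthm} without proof, deferring to the cited literature (Attouch et al.); the surrounding propositions in Appendix~\ref{sec:convexmin} --- coercivity~$\Leftrightarrow$~bounded sublevel sets (Proposition \ref{coercivebdd}), convex lsc~$\Rightarrow$~weakly lsc (Proposition \ref{weaklylsc}), and weak sequential compactness in reflexive spaces (Proposition \ref{bddseqref}) --- are precisely the scaffolding for the direct method, and your proof uses all three in exactly the intended way, so you have taken the approach the paper is clearly building toward.

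One small logical wrinkle is worth repairing. You set $m := \inf_V E$, observe $m < +\infty$ from properness, and then write ``for $n$ large, $E(x_n) \leq m+1$,'' placing $x_n$ in the sublevel set $E_{m+1}$. This step silently presupposes $m > -\infty$ --- if $m = -\infty$ the quantity $m+1$ is not defined --- yet you only conclude $m > -\infty$ at the end of the argument, which is circular as written. The fix is cheap: instead of $m+1$, fix any real constant $c$ with $E_c \neq \emptyset$ (which exists because $E$ is proper). Since $m < c$, any minimising sequence satisfies $E(x_n) \leq c$ eventually, so $(x_n)$ lies in the bounded set $E_c$, and the rest of the compactness and weak-lsc argument proceeds exactly as you have it, now delivering $m = E(x_0) > -\infty$ honestly rather than by assumption. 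The uniqueness argument via strict convexity at $\lambda = \tfrac{1}{2}$ is correct and complete as stated.
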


After establishing the existence of a minimum of a convex functional $E$, we would like to determine for which element(s) $x \in V$ this minimum is attained. In order to characterise the minimiser(s) $x$, we introduce the subdifferential of $E$.

For a functional $E:V \rightarrow (-\infty, \infty]$, the \emph{subdifferential} of $E$ at a point $x \in V$ is defined as $$\partial E(x) := \{ v' \in V' \mid E(y)-E(x) \geq \langle v',y-x\rangle_{V',V} \text{ for all } y \in V\},$$ 
where $V'$ denotes the dual space of $V$. The following proposition is a simple consequence of this definition.
\begin{proposition}
\label{subdiffmin}
Let $E:V \rightarrow (-\infty, +\infty]$ be a functional. Then a point $x \in V$ minimises $E$ if and only if $0 \in \partial E(x)$.
\end{proposition}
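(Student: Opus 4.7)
The plan is to prove both directions of the equivalence by directly unfolding the definition of the subdifferential, since this is essentially a tautological reformulation of the minimisation property. Recall from the preceding definition that $0 \in \partial E(x)$ means precisely that $E(y) - E(x) \geq \langle 0, y - x \rangle_{V',V}$ for every $y \in V$, and the right-hand side of this inequality is identically zero.

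For the forward implication, I would suppose that $x \in V$ is a minimiser of $E$, so that $E(y) \geq E(x)$ for all $y \in V$. This is equivalent to $E(y) - E(x) \geq 0 = \langle 0, y-x \rangle_{V',V}$ for every $y \in V$, which is exactly the defining condition for the zero functional to belong to $\partial E(x)$.

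For the reverse implication, I would suppose $0 \in \partial E(x)$. By the definition of the subdifferential applied to the element $0 \in V'$, this yields $E(y) - E(x) \geq \langle 0, y - x \rangle_{V',V} = 0$ for every $y \in V$, which rearranges to $E(y) \geq E(x)$ for all $y \in V$, so that $x$ is a minimiser of $E$ on $V$.

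There is no genuine obstacle in this proof; it is a direct consequence of the definition, and no convexity, lower semicontinuity, or topological hypothesis on $V$ or $E$ is required. The only minor care is to note that the zero element of the dual space $V'$ pairs trivially with every $y - x \in V$, so the inequality in the definition of $\partial E(x)$ reduces cleanly to the defining inequality of a global minimiser.
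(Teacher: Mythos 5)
Your proof is correct and is exactly the direct unfolding of the definition of $\partial E(x)$ that the paper alludes to when it calls the proposition ``a simple consequence of this definition'' (the paper itself omits the proof). You also rightly observe that no convexity or topological hypotheses are needed here.
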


The subdifferential of a functional $E$ at $x$ may in general contain more than one point, or even be empty. However, if $E$ is G\^ateaux differentiable at $x$, then $\partial E(x)$ becomes single valued \cite{Showalter}.

\begin{proposition}\label{subdiffgateaux}
Let $E: (-\infty,+\infty]$ be convex and G\^ateaux differentiable at $x$. Then $\partial E(x)$ is a singleton and $\partial E(x) = \{ E'(x)\}$, where $E'(x)$ denotes the G\^ateaux derivative of $E$ at $x$ as defined in Chapter \ref{sec:min}.
\end{proposition}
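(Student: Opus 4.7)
The plan is to establish the equality $\partial E(x) = \{E'(x)\}$ by proving two inclusions: first that $E'(x) \in \partial E(x)$, and second that any $v' \in \partial E(x)$ must coincide with $E'(x)$.

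The first inclusion will follow from convexity together with a limit in the difference quotient. For arbitrary $y \in V$ and $t \in (0,1]$, convexity applied with $\lambda = t$ to the pair $x, y$ gives $E(x + t(y-x)) \leq (1-t)E(x) + tE(y)$, which rearranges to $\frac{E(x+t(y-x)) - E(x)}{t} \leq E(y) - E(x)$. Passing to the limit $t \to 0^+$ and using the Gâteaux differentiability of $E$ at $x$ in the direction $h := y - x$ yields $\langle E'(x), y - x\rangle_{V',V} \leq E(y) - E(x)$, which is exactly the subdifferential inequality defining membership of $E'(x)$ in $\partial E(x)$.

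For the second inclusion, I would take an arbitrary $v' \in \partial E(x)$ and a fixed $h \in V$. Substituting $y = x + th$ into the subdifferential inequality, dividing by $t > 0$, and letting $t \to 0^+$ produces $\langle E'(x), h\rangle_{V',V} \geq \langle v', h\rangle_{V',V}$. To obtain the reverse inequality, I would substitute $y = x - th$, which gives an inequality involving $\frac{E(x-th) - E(x)}{t}$; the limit of this difference quotient as $t \to 0^+$ is, by the definition of the Gâteaux derivative applied in direction $-h$ and the linearity of $E'(x) \in V'$, equal to $\langle E'(x), -h\rangle_{V',V} = -\langle E'(x), h\rangle_{V',V}$. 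Rearranging yields $\langle v', h\rangle_{V',V} \geq \langle E'(x), h\rangle_{V',V}$, and combining the two inequalities forces $\langle v', h\rangle_{V',V} = \langle E'(x), h\rangle_{V',V}$ for every $h \in V$, so $v' = E'(x)$ as elements of $V'$.

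The only delicate point is the second step, where one must keep track of the fact that the Gâteaux derivative is defined as a one-sided limit $t \to 0^+$, so information about the $-h$ direction has to be extracted by invoking the linearity of $E'(x)$ as an element of $V'$ rather than from a two-sided limit. Apart from this bookkeeping, the argument reduces to straightforward applications of the definitions of convexity, of the Gâteaux derivative, and of the subdifferential.
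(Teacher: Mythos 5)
The paper does not actually give a proof of this proposition — it states it and cites Showalter — so there is no in-text argument to compare against. Your reconstruction is the standard one and it is correct. The first inclusion $E'(x) \in \partial E(x)$ is obtained exactly as you describe: convexity gives $t^{-1}\bigl(E(x+t(y-x)) - E(x)\bigr) \leq E(y) - E(x)$ for all $t \in (0,1]$, and since the left side converges to $\langle E'(x), y-x\rangle_{V',V}$ as $t\to 0^+$, the limit preserves the inequality. (The edge case $E(y)=+\infty$ is handled automatically since the subdifferential inequality is then vacuous.) The second inclusion is also right, and you are correct to flag the one subtle point: the paper defines the Gâteaux derivative via a one-sided limit $t \to 0^+$, so the information from the $-h$ direction comes from applying the one-sided limit in direction $-h$ and then invoking the linearity of $E'(x)$ as an element of $V'$, i.e.\ $\lim_{t\to 0^+} t^{-1}(E(x-th)-E(x)) = \langle E'(x), -h\rangle_{V',V} = -\langle E'(x), h\rangle_{V',V}$. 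Combined with the $+h$ inequality this pins down $\langle v',h\rangle_{V',V} = \langle E'(x),h\rangle_{V',V}$ for all $h$, hence $v' = E'(x)$. Nothing is missing.
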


Combining the results of Propositions \ref{subdiffmin} and \ref{subdiffgateaux}, we have that if $E$ is convex and G\^ateaux differentiable at its minimiser $x$, then it is forced that $E'(x)=0$. This is analogous to the equivalent statement from classical calculus.

\chapter{Absolutely Continuous Functions}\label{sec:abs-cont}
A crucial assumption in Theorems \ref{thm:gen-dist} and \ref{thm:gen-backwards} is that the function $f$ is absolutely continuous. Hence, we summarise some basic properties and results involving absolutely continuous functions. Unless noted otherwise, the following may be found in \cite{Leoni}.

Let $I \subseteq \R$ be an interval. A function $u: I \rightarrow \R^d$ is called \emph{absolutely continuous} on $I$ if for every $\varepsilon > 0$ there exists $\delta >0$ such that $$\sum_{i=1}^n |u(b_i) - u(a_i)| \leq \varepsilon$$ for every finite number of non-overlapping intervals $(a_i,b_i)$, $i=1, \ldots, n$, such that $[a_i,b_i] \subseteq I$ and $$\sum_{i=1}^n (b_i - a_i) \leq \delta. $$ Equivalently, we may replace $n$ by $\infty$ in this definition. We also note that by taking $n=1$ in this definition, it follows immediately that an absolutely continuous function $u$ is uniformly continuous on $I$. However, the converse is not true in general. Similarly, it is an immediate consequence of the definition that any Lipschitz continuous function on $I$ is also absolutely continuous on $I$.

The next propositions provide some frequently used examples of absolutely continuous functions.

\begin{proposition}\label{prop:basic-abs}
Let $u,v:I \rightarrow \R$ be absolutely continuous on a bounded interval $I \subseteq \R$. Then, $u \pm v$ and $uv$ are absolutely continuous on $I$. If $v$ is positive on $I$, then $\tfrac{u}{v}$ is also absolutely continuous on $I$.
\end{proposition}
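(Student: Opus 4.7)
The plan is to handle the three claims in sequence, increasing in difficulty. For the sum and difference $u \pm v$, the standard $\varepsilon/2$ trick applies directly: given $\varepsilon > 0$, pick $\delta_1, \delta_2$ witnessing absolute continuity of $u, v$ at level $\varepsilon/2$, take $\delta := \min\{\delta_1,\delta_2\}$, and use the triangle inequality
$$|u(b_i) \pm v(b_i) - u(a_i) \mp v(a_i)| \leq |u(b_i) - u(a_i)| + |v(b_i) - v(a_i)|$$
on any finite non-overlapping collection $\{(a_i,b_i)\}$ with $\sum(b_i - a_i) \leq \delta$, then sum.

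For the product $uv$, I would first note that since $I$ is bounded and $u, v$ are absolutely continuous, they are uniformly continuous and hence extend continuously to the compact set $\bar{I}$; in particular they are bounded, say $|u|, |v| \leq M$ on $I$. Then the telescoping identity
$$u(b_i) v(b_i) - u(a_i) v(a_i) = u(b_i)\bigl(v(b_i) - v(a_i)\bigr) + v(a_i)\bigl(u(b_i) - u(a_i)\bigr)$$
gives
$$|u(b_i) v(b_i) - u(a_i) v(a_i)| \leq M\bigl(|v(b_i) - v(a_i)| + |u(b_i) - u(a_i)|\bigr),$$
so choosing $\delta$ to work simultaneously for $u$ and $v$ at level $\varepsilon/(2M)$ finishes this case.

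For the quotient $u/v$, it suffices to prove $1/v$ is absolutely continuous and then apply the product result to $u \cdot (1/v)$. The key observation is that $v$, being uniformly continuous and positive on the bounded interval $I$, is bounded below by a positive constant $m > 0$ on $I$ (this uses continuous extension to $\bar I$ together with positivity; if $I$ is open or half-open one must read ``positive on $I$'' as ``bounded below by a positive constant'', which is the correct hypothesis). Once $m > 0$ is in hand, the elementary estimate
$$\left|\frac{1}{v(b_i)} - \frac{1}{v(a_i)}\right| = \frac{|v(b_i) - v(a_i)|}{v(a_i) v(b_i)} \leq \frac{1}{m^2}\,|v(b_i) - v(a_i)|$$
transfers absolute continuity from $v$ to $1/v$ verbatim (choose $\delta$ for $v$ at level $m^2 \varepsilon$).

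The main obstacle is the subtlety in the quotient case: the positivity hypothesis must genuinely deliver a uniform positive lower bound on $v$, which is automatic when $\bar I$ is compact but otherwise needs to be assumed. Modulo this point, all three claims are short consequences of the definition combined with the telescoping identities above, and no deeper machinery (such as the Vitali covering theorem or the Lebesgue differentiation theorem for absolutely continuous functions) is required.
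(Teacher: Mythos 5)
The paper states this proposition in an appendix and cites Leoni's text for a proof, so there is no proof in the paper itself to compare against. Your argument is the standard one and is correct: the $\varepsilon/2$ splitting for $u \pm v$, the telescoping decomposition
$u(b_i)v(b_i) - u(a_i)v(a_i) = u(b_i)\bigl(v(b_i)-v(a_i)\bigr) + v(a_i)\bigl(u(b_i)-u(a_i)\bigr)$
together with boundedness of $u,v$ on $\bar I$ for the product, and the elementary estimate $\bigl|\tfrac{1}{v(b_i)} - \tfrac{1}{v(a_i)}\bigr| \leq m^{-2}|v(b_i)-v(a_i)|$ reducing the quotient case to the product case via $1/v$.

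Your caveat on the quotient hypothesis is also correct and worth keeping: since $I$ is assumed bounded but not closed, ``$v$ positive on $I$'' does not by itself yield a uniform positive lower bound. For instance $v(x) = x$ on $(0,1)$ is positive and absolutely continuous, yet $1/v$ is unbounded and hence not absolutely continuous. The proposition should be read with the implicit strengthening that $v$ is bounded away from zero (equivalently, that the continuous extension of $v$ to $\bar I$ is positive), exactly as you note; with that reading the argument you give is complete.
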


Although the composition of two absolutely continuous functions is not absolutely continuous in general, we do have the following result.

\begin{proposition}\label{prop:lip-abs}
Let $I \subseteq \R$ be an interval and $u:I \rightarrow \R^d$ be absolutely continuous on $I$. If $f: \R^d \rightarrow \R$ is Lipschitz continuous, then $f \circ u$ is absolutely continuous on $I$.
\end{proposition}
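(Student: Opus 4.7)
The plan is to reduce the claim immediately to the definition of absolute continuity of $u$ by using the Lipschitz bound for $f$ as a pointwise estimate on each increment. Let $L \geq 0$ denote the Lipschitz constant of $f$, so that $|f(x) - f(y)| \leq L|x-y|$ for all $x,y \in \R^d$. The case $L=0$ is trivial, since then $f$ is constant, and hence so is $f\circ u$; I will assume $L>0$.

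Given $\varepsilon > 0$, I would apply the definition of absolute continuity of $u$ to the tolerance $\varepsilon/L$ in order to obtain a $\delta > 0$ such that for every finite collection of non-overlapping subintervals $(a_i,b_i) \subseteq I$, $i=1,\ldots,n$, satisfying $\sum_{i=1}^n (b_i - a_i) \leq \delta$, one has
$$\sum_{i=1}^n |u(b_i) - u(a_i)| \leq \frac{\varepsilon}{L}.$$
Then, the Lipschitz estimate applied increment by increment gives
$$\sum_{i=1}^n |f(u(b_i)) - f(u(a_i))| \leq L \sum_{i=1}^n |u(b_i) - u(a_i)| \leq \varepsilon,$$
which is precisely the condition for $f \circ u$ to be absolutely continuous on $I$ with the same $\delta$.

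There is no genuine obstacle in this argument; it is a direct consequence of the two definitions and the scaling of the tolerance by the Lipschitz constant. The only point requiring any care is the separate treatment of the degenerate case $L=0$ and an initial remark that the composition $f \circ u$ is well-defined since $u$ takes values in the domain $\R^d$ of $f$.
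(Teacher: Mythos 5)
Your proof is correct, and it is the standard direct argument: scale the tolerance by the Lipschitz constant and pass the increments through the Lipschitz bound term by term. The paper does not actually prove this proposition — it is one of the facts in Appendix C stated with the blanket citation to Leoni's textbook — so there is no paper proof to compare against, but your argument is exactly the one that would be given there. The separate handling of the degenerate case $L=0$ is a nice touch, though one could also avoid it by applying absolute continuity of $u$ with tolerance $\varepsilon/(L+1)$.
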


\begin{proposition}\label{prop:prim-abs}
Let $I \subset \R$ be an interval and $v:I \rightarrow \R^d$ be a Lebesgue integrable function. Fix $x_0 \in I$ and set $$u(x):= \int_{x_0}^x v(t) \ \mathrm{d}t$$ for all $x \in I$. Then $u$ is absolutely continuous on $I$ with $u'(x) = v(x)$ for a.e. $x \in I$.
\end{proposition}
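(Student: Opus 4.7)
The plan is to verify two separate claims: that $u$ is absolutely continuous on $I$, and that $u'(x) = v(x)$ for a.e. $x \in I$. Both rest on standard measure-theoretic tools for the Lebesgue integral, so the argument is conceptually clean; the second part is the one that needs a nontrivial classical theorem.

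For absolute continuity, I would begin with the \emph{absolute continuity of the Lebesgue integral}: since $|v|$ is integrable on $I$, for every $\varepsilon > 0$ there exists $\delta > 0$ such that for every measurable set $E \subseteq I$ with Lebesgue measure $|E| < \delta$, one has $\int_E |v(t)|\,\mathrm{d}t < \varepsilon$. Given any finite (or countable) collection of non-overlapping intervals $(a_i,b_i) \subseteq I$ with $\sum_i (b_i - a_i) < \delta$, I would take $E := \bigcup_i (a_i,b_i)$, which is measurable with $|E| < \delta$, and estimate
\begin{equation*}
\sum_i |u(b_i) - u(a_i)| = \sum_i \left| \int_{a_i}^{b_i} v(t)\,\mathrm{d}t \right| \leq \sum_i \int_{a_i}^{b_i} |v(t)|\,\mathrm{d}t = \int_E |v(t)|\,\mathrm{d}t < \varepsilon,
\end{equation*}
which is precisely the definition of absolute continuity.

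For the identity $u'(x) = v(x)$ a.e., I would appeal to the \emph{Lebesgue differentiation theorem}, which states that for any locally integrable function $v$ on $I$, Lebesgue-almost every point $x \in I$ is a Lebesgue point of $v$, meaning that
\begin{equation*}
\lim_{h \to 0}\frac{1}{h}\int_x^{x+h} v(t)\,\mathrm{d}t = v(x).
\end{equation*}
Applying this to our $v$, for a.e.\ $x \in I$ one has
\begin{equation*}
\lim_{h \to 0}\frac{u(x+h) - u(x)}{h} = \lim_{h \to 0}\frac{1}{h}\int_x^{x+h} v(t)\,\mathrm{d}t = v(x),
\end{equation*}
so $u$ is differentiable at $x$ in the classical sense with $u'(x) = v(x)$.

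The hard part of the statement (in the sense of requiring substantial machinery) is the Lebesgue differentiation theorem, whose full proof typically relies on the Hardy–Littlewood maximal inequality or a Vitali-type covering argument; in an appendix it is reasonable to invoke it as a known result rather than reproduce its proof. The absolute-continuity step is routine once one knows the absolute continuity of the integral, which itself follows either from monotone convergence applied to the truncations $|v| \wedge n$, or from the density of simple functions in $L^1$. Overall the proof is short, modular, and can be dispatched in a few lines each.
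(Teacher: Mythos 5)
Your proof is correct. The paper does not actually give a proof of this proposition: Appendix~\ref{sec:abs-cont} states that, unless noted otherwise, its results are taken from Leoni \cite{Leoni}, and Proposition~\ref{prop:prim-abs} is presented without argument. Your decomposition into (a) absolute continuity of the indefinite Lebesgue integral and (b) the Lebesgue differentiation theorem is exactly the standard route taken in that reference and in essentially every real-analysis text, and both steps are carried out correctly (the vector-valued case just goes componentwise, and the bound $\left|\int_{a_i}^{b_i} v\right| \le \int_{a_i}^{b_i}|v|$ together with non-overlapping of the intervals gives $\sum_i |u(b_i)-u(a_i)| \le \int_E |v|$ as you wrote). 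One minor remark: what you need at $x$ is only $\lim_{h\to 0}\frac{1}{h}\int_x^{x+h} v = v(x)$, which holds at every Lebesgue point but is formally weaker than the Lebesgue-point property $\lim_{h\to 0}\frac{1}{h}\int_x^{x+h}|v - v(x)| = 0$; either version of the differentiation theorem suffices, so this is a matter of wording, not correctness.
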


A main motivation for introducing the notion of absolute continuity is that it characterises the set of functions for which the fundamental theorem of calculus holds for Lebesgue integration.

\begin{theorem}[Fundamental Theorem of Calculus, \cite{Leoni}]\label{thm:ftc-abs}
Suppose a function $u : I \rightarrow \R^d$ is absolutely continuous on $I$. Then 
\begin{enumerate}
\item $u$ is continuous in $I$;
\item $u$ is differentiable $\mathcal{L}^1$-a.e. in $I$ with derivative $u' \in L^1_{\loc}(I, \R^d)$;
\item the fundamental theorem of calculus holds, that is, $$u(x) = u(x_0) + \int_{x_0}^xu'(t) \ \mathrm{d}t$$ for all $x,x_0 \in I$.
\end{enumerate}
Conversely, if a function $u:I \rightarrow \R^d$ satisfies conditions $(i)-(iii)$, then $u$ is absolutely continuous on $I$.
\end{theorem}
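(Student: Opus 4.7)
The plan is to establish the forward direction piece by piece and then obtain the converse as a short application of the absolute continuity of the Lebesgue integral. Part (i) is immediate upon taking $n = 1$ in the AC definition, giving uniform continuity on $I$. For part (ii), I would first show that $u$ has locally bounded variation: for any compact $[\alpha,\beta] \subset I$, letting $\delta$ correspond to $\varepsilon = 1$ in the AC condition and partitioning $[\alpha,\beta]$ into finitely many pieces each of length less than $\delta$, the total variation of $u$ on each piece is at most $1$, so $u \in BV([\alpha,\beta])$. Applying the Jordan decomposition and then Lebesgue's classical differentiation theorem for monotone functions yields that $u$ is differentiable $\mathcal{L}^1$-a.e. on $[\alpha,\beta]$ with derivative in $L^1([\alpha,\beta],\R^d)$; exhausting $I$ by compact subintervals produces the desired conclusion $u' \in L^1_{\loc}(I,\R^d)$.

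The core of the proof is (iii). I would define $v(x) := \int_{x_0}^x u'(t) \ \mathrm{d}t$, which by Proposition \ref{prop:prim-abs} is absolutely continuous on $I$ with $v' = u'$ a.e. The difference $w := u - v$ is then absolutely continuous by Proposition \ref{prop:basic-abs} and satisfies $w' = 0$ a.e. The essential lemma, whose proof is the hard step, is that an absolutely continuous function with a.e. zero derivative is constant. I would establish this via a Vitali covering argument: fix a compact $[\alpha,\beta]\subset I$ and $\varepsilon > 0$, let $\delta$ come from the AC condition for $w$, and note that $\mathcal{L}^1$-a.e. $x \in [\alpha,\beta]$ admits arbitrarily small $h > 0$ with $|w(x+h) - w(x)| < \varepsilon h$. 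These intervals form a Vitali cover of a set of full measure, so a finite disjoint subfamily $\{[x_i,x_i + h_i]\}_{i=1}^N$ may be extracted whose complement in $[\alpha,\beta]$ has measure less than $\delta$. The variation of $w$ across the chosen intervals is at most $\varepsilon\sum_i h_i \leq \varepsilon(\beta-\alpha)$, while the AC condition bounds the variation across the finitely many complementary gaps by $\varepsilon$. Summing gives $|w(\beta) - w(\alpha)| < \varepsilon(1+\beta-\alpha)$, and sending $\varepsilon \to 0$ yields $w(\beta) = w(\alpha)$. Applied across all of $I$, this forces $w$ constant; evaluating at $x_0$ identifies that constant as $u(x_0)$, which is precisely the FTC identity.

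For the converse, suppose $u$ satisfies (i)-(iii). Given $\varepsilon > 0$ and a compact subinterval $J \subset I$, absolute continuity of the Lebesgue integral applied to $|u'| \in L^1(J)$ produces $\delta > 0$ with $\int_E |u'| \ \mathrm{d}t < \varepsilon$ for every measurable $E \subset J$ with $|E| < \delta$. Then for any finite collection of non-overlapping intervals $(a_i,b_i) \subset J$ with $\sum_i (b_i - a_i) < \delta$, the FTC representation and the triangle inequality give
\[
\sum_i |u(b_i) - u(a_i)| = \sum_i \Bigl|\int_{a_i}^{b_i} u'(t) \ \mathrm{d}t \Bigr| \leq \int_{\bigcup_i (a_i,b_i)} |u'(t)| \ \mathrm{d}t < \varepsilon,
\]
which is exactly the AC condition. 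The single nontrivial step in the whole argument is the Vitali covering lemma in (iii); everything else is either immediate from the definitions, a direct application of Lebesgue's differentiation theorem for monotone functions, or straightforward manipulation of the integral representation.
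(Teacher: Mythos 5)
This statement is presented in Appendix C as a background result cited to Leoni's \emph{A First Course in Sobolev Spaces}; the paper supplies no proof of its own, so there is nothing internal to compare your argument against. That said, your proof is correct and is essentially the canonical textbook argument: (i) from the $n=1$ case of the definition; (ii) via local bounded variation, componentwise Jordan decomposition, and Lebesgue's differentiation theorem for monotone functions; (iii) via the primitive $v(x)=\int_{x_0}^x u'$, reduction to the lemma that an absolutely continuous function with a.e.\ vanishing derivative is constant, and a Vitali covering argument for that lemma; and the converse via absolute continuity of the Lebesgue integral. One small point worth flagging: because condition (ii) only guarantees $u'\in L^1_{\loc}(I,\R^d)$, your converse argument produces a $\delta$ that depends on the compact subinterval $J$, i.e.\ it establishes \emph{local} absolute continuity. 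On an unbounded or non-closed interval this is genuinely weaker than the uniform definition quoted in the paper (e.g.\ $u(x)=x^2$ on $\R$ satisfies (i)--(iii) with $u'\in L^1_{\loc}$ but is not uniformly AC). This is really a mismatch baked into how the statement is transcribed rather than a flaw in your reasoning, and it is immaterial for the paper's use of the theorem, which is only ever invoked on compact $[t_1,t_2]$; still, you should either restrict the converse to compact $I$, replace ``absolutely continuous'' by ``locally absolutely continuous,'' or strengthen (ii) to $u'\in L^1(I)$ if you want the equivalence as stated.
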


The concept of absolute continuity also allows one to state the chain rule under weaker hypotheses than are required in classical calculus. In order to describe this result precisely, we first introduce the $s$-dimensional Hausdorff measure on $\R^d$ as defined in \cite{Evans-Gariepy}. This measure is particularly useful for measuring sets of lower dimension in $\R^d$, which have Lebesgue measure zero.

Let $A \subseteq \R^d$, $0 \leq s < \infty$, $0 < \delta \leq \infty$, and $$\mathcal{H}^s_{\delta}(A):= \inf \left\{ \sum_{i=1}^{\infty} \omega_s \Big(\tfrac{\diam C_i}{2}\Big)^s \mid A \subseteq \bigcup_{i=1}^\infty C_i, \ \diam C_i \leq \delta \right\}$$ where $\omega_s:= \tfrac{\pi^{s/2}}{\Gamma(1+\tfrac{s}{2})}$ is the Lebesgue measure of the unit ball in $\R^s$ and $$\diam C_i:= \sup_{x,y \in C_i} |x-y|$$ is the diameter of the set $C_i$. Then, define $$\mathcal{H}^s(A):= \lim_{\delta \rightarrow 0} \mathcal{H}^s_{\delta}(A) = \sup_{\delta >0} \mathcal{H}^s_{\delta} (A).$$ We call $\mathcal{H}^s$ the \emph{s-dimensional Hausdorff measure on $\R^d$}. In the case $s=d$, the $d$-dimensional Hausdorff measure on $\R^d$ coincides with the Lebesgue measure on $\R^d$.

In addition, we say that a set $A \subseteq \R^d$ has the \emph{null intersection property} if the intersection of $S$ with the image of any absolutely continuous curve $u:I \rightarrow \R^d$ is a set of $\mathcal{H}^1$ measure zero. We now state a version of the chain rule holding under weaker assumptions, which is originally due to Marcus and Mizel \cite{Marcus-Mizel}.

\begin{theorem}[Chain Rule]\label{thm:chain-abs}
Let $u:I \rightarrow \R^d$ be absolutely continuous and denote the image of $u$ by $T_u:=u(I)$. Let $f:\R^d \rightarrow \R$ be a function such that 
\begin{enumerate}
    \item the set of points at which $f$ does not have a total derivative has the null intersection property;
    \item $\nabla f(u) \cdot \dot{u}$ satisfies $$\int_I |\nabla f(u) \cdot \dot{u}| \ \mathrm{d}t < \infty;$$
    \item $f$ is absolutely continuous on all continuously differentiable curves $u:I \rightarrow \R^d$.
\end{enumerate}
Then $f|_{T_u}$ is absolutely continuous on $T_u$. In addition, the composition $g:=f \circ u$ is absolutely continuous on $I$ and the chain rule holds, that is, $$\dot{g} = \nabla f(u) \cdot \dot{u} \qquad \text{a.e. in } I.$$
Here, the expression $\nabla f(u) \cdot \dot{u} = \nabla f(u(t)) \cdot \dot{u}(t)$ is interpreted to be zero whenever $\dot{u} = 0$, even if $f$ is not differentiable at $u(t)$.
\end{theorem}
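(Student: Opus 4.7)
The goal is to establish both the absolute continuity of $g := f \circ u$ on $I$ and the chain rule formula $\dot{g}(t) = \nabla f(u(t)) \cdot \dot{u}(t)$ for a.e.\ $t \in I$. By a standard localisation argument, it suffices to treat the case of a compact interval $I = [a,b]$. The first step is to control where the classical chain rule can fail. Writing $S \subseteq \R^d$ for the set of points at which $f$ has no total derivative, hypothesis (i) ensures $\mathcal{H}^1(S \cap T_u) = 0$. Since $u$ is absolutely continuous, by Theorem \ref{thm:ftc-abs} $\dot{u}$ exists a.e.\ in $I$ and belongs to $L^1(I, \R^d)$. Applying the area formula for absolutely continuous curves with multiplicity $N(y) := \#u^{-1}(y)$ yields
$$\int_I \mathbf{1}_{S}(u(t)) \, |\dot{u}(t)| \ \mathrm{d}t = \int_{T_u} \mathbf{1}_{S}(y) \, N(y) \ \mathrm{d}\mathcal{H}^1(y) = 0.$$
Hence, for a.e.\ $t \in I$, either $u(t) \notin S$ or $\dot{u}(t) = 0$, and at such $t$ the quantity $\nabla f(u(t)) \cdot \dot{u}(t)$ is unambiguously defined under the convention that it equals $0$ whenever $\dot{u}(t) = 0$.

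Next, I would approximate $u$ by a sequence of continuously differentiable curves $u_n : I \to \R^d$ with $u_n \to u$ uniformly and $\dot{u}_n \to \dot{u}$ in $L^1(I,\R^d)$; such a sequence is constructed by mollifying $\dot{u}$ and integrating. By hypothesis (iii), each $g_n := f \circ u_n$ is absolutely continuous, so Theorem \ref{thm:ftc-abs} gives $g_n(t) - g_n(a) = \int_a^t \dot{g}_n(s) \ \mathrm{d}s$. Since $u_n$ is $C^1$ and $f$ has a total derivative at every $y \notin S$, the classical chain rule applies to yield $\dot{g}_n(t) = \nabla f(u_n(t)) \cdot \dot{u}_n(t)$ outside a set of $t$ that is negligible by the same area-formula argument applied to $u_n$. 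The plan is then to prove that, along a suitable subsequence, $\dot{g}_n \to \nabla f(u) \cdot \dot{u}$ in $L^1(I)$; combined with the uniform convergence $g_n \to g$, this yields
$$g(t) - g(a) = \int_a^t \nabla f(u(s)) \cdot \dot{u}(s) \ \mathrm{d}s.$$
By Proposition \ref{prop:prim-abs}, the right-hand side is absolutely continuous in $t$ with a.e.\ derivative $\nabla f(u) \cdot \dot{u}$, which simultaneously delivers the absolute continuity of $g$ and the chain rule formula. The absolute continuity of $f|_{T_u}$ then follows by reparametrising $T_u$ by arclength $s(t) := \int_a^t |\dot{u}(\tau)| \ \mathrm{d}\tau$, which is absolutely continuous and increasing, and expressing $f|_{T_u}$ through $g$.

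The principal obstacle is proving the $L^1$-convergence $\nabla f(u_n) \cdot \dot{u}_n \to \nabla f(u) \cdot \dot{u}$. The difficulty is twofold: $\nabla f$ is only defined off the possibly irregular set $S$, and its behaviour there could be wild beyond what assumption (iii) guarantees along $C^1$ paths. I would extract a subsequence so that $u_n \to u$ and $\dot{u}_n \to \dot{u}$ pointwise a.e., then use the area-formula reduction above together with hypothesis (ii) to argue that $\nabla f(u_n(t)) \cdot \dot{u}_n(t) \to \nabla f(u(t)) \cdot \dot{u}(t)$ pointwise a.e.\ in $I$. Dominated convergence, justified by a uniform $L^1$-bound on $\nabla f(u_n) \cdot \dot{u}_n$ obtained from the absolute continuity of the $g_n$ together with equi-integrability of $(\dot{u}_n)_n$, would then furnish the required $L^1$-convergence. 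A diagonal selection among the approximating $C^1$ curves, arranged so that the individual exceptional sets consolidate into a single $\mathcal{L}^1$-null set, is the key technical device that makes this argument go through.
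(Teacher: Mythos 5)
The paper does not prove this theorem at all: Theorem~\ref{thm:chain-abs} is stated as a cited result of Marcus and Mizel \cite{Marcus-Mizel}, so there is no proof in the monograph to compare yours against. Evaluated on its own terms, your proposed proof by $C^1$ approximation has a genuine gap precisely at the step you flag as the principal obstacle, and your proposed resolution does not close it. You claim that pointwise a.e.\ convergence $u_n(t) \to u(t)$ and $\dot u_n(t) \to \dot u(t)$, together with the area-formula reduction and hypothesis (ii), yields $\nabla f(u_n(t)) \cdot \dot u_n(t) \to \nabla f(u(t)) \cdot \dot u(t)$ a.e. But nothing in hypotheses (i)--(iii) gives any continuity of the map $y \mapsto \nabla f(y)$. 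Total differentiability of $f$ at $u(t)$ does not imply continuity of $\nabla f$ at $u(t)$; the area-formula reduction only tells you that $u(t) \notin S$ or $\dot u(t) = 0$ for a.e.\ $t$, not that $\nabla f(u_n(t))$ converges. Likewise, at points where $\dot u(t)=0$ you would need $|\nabla f(u_n(t))|$ to stay bounded along the approximating sequence, which again requires some local control on $\nabla f$ that is not available. Hypothesis (ii) is an integrability statement about the limit object $\nabla f(u)\cdot \dot u$ and provides no compactness for the approximants. Consequently, the asserted pointwise convergence, and therefore the dominated/Vitali convergence step that rests on it, is unjustified.

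There is a secondary weakness in the passage ``a uniform $L^1$-bound on $\nabla f(u_n)\cdot\dot u_n$ obtained from the absolute continuity of the $g_n$'': absolute continuity of each $g_n$ gives $\dot g_n \in L^1$ for each fixed $n$, but no uniform modulus or equi-integrability across $n$, and equi-integrability of $(\dot u_n)_n$ alone does not transfer to the product with the unbounded factor $\nabla f(u_n)$. The final claim, that absolute continuity of $f|_{T_u}$ follows by arclength reparametrisation of $T_u$ and ``expressing $f|_{T_u}$ through $g$,'' also needs care: the arclength function $s(t)$ can be constant on nontrivial intervals and the track $T_u$ may self-intersect, so the inversion is not immediate. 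The actual Marcus--Mizel argument does not proceed by mollifying $u$ and passing to the limit in the pointwise chain rule; it establishes absolute continuity on the track first via a more delicate covering and monotonicity analysis, from which the chain rule is then extracted. If you want a $C^1$-approximation route, you would need an additional hypothesis (e.g.\ local Lipschitz continuity of $f$, which the paper remarks suffices for (i) and (iii)) to control $\nabla f(u_n)$ uniformly; with only (i)--(iii) as stated, the limiting step does not go through.
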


We remark that it was shown in \cite{Marcus-Mizel} that any locally Lipschitz continuous function $f: \R^d \rightarrow \R$ satisfies assumptions (i) and (iii) of Theorem \ref{thm:chain-abs}.

\chapter{Distribution Theory}\label{sec:distributions}

We briefly summarise the most important notions and examples from the theory of distributions \cite{Adams-Fournier}, which we use throughout this monograph.

Let $\Omega$ be a domain in $\R^d$. We say that a sequence $(\varphi_n)_{n\geq 1}$ in $C^{\infty}_c(\Omega)$ converges \emph{in the $\mathscr{D}(\Omega)$ sense} to $\varphi \in C_c^{\infty}(\Omega)$ if the two following conditions hold:
\begin{enumerate}
    \item there exists $K\subset \subset \Omega$ such that $\supp(\varphi_n - \varphi) \subset K$ for every $n \geq 1$;
    \item $\lim_{n \rightarrow \infty}D^{\alpha} \varphi_n(x) = D^{\alpha}\varphi(x)$ uniformly on $K$, for each multi-index $\alpha \in \N_0^d$, where $$D^{\alpha}:=\frac{\partial^{|\alpha|}}{\partial x_1^{\alpha_1}\partial x_2^{\alpha_2}\ldots\partial x_d^{\alpha_d}}$$ and $|\alpha|:=\sum_{i=1}^{d} \alpha_i$.
\end{enumerate}
Then, we denote by $\mathscr{D}(\Omega)$ the set of test functions $C_c^{\infty}(\Omega)$ equipped with the locally convex topology $\tau$, for which every linear functional \mbox{$T:\mathscr{D}(\Omega) \rightarrow \R$ } is continuous if and only if $T\varphi_n \rightarrow T\varphi$ in $\R$ whenever $\varphi_n \rightarrow \varphi$ in the $\mathscr{D}(\Omega)$ sense. The dual space $(\mathscr{D}(\Omega))'$ of $\mathscr{D}(\Omega)$ is called the space of (Schwartz) distributions on $\Omega$ and denoted by $\mathscr{D}'(\Omega)$.

Important examples of distributions include locally integrable functions, since every $f \in L^1_{\loc}(\Omega)$ induces a distribution $T_f \in \mathscr{D}'(\Omega)$ defined by
$$\langle T_f, \varphi \rangle_{\mathscr{D}',\mathscr{D}} = \int_{\Omega} f(x) \varphi(x) \ \mathrm{d}x$$ for all $\varphi \in \mathscr{D}(\Omega)$. If there exists $f\in L^1_{\loc}(\Omega)$, which induces a given distribution $T_f\in \mathscr{D}'(\Omega)$, then $T_f$ is called a \emph{regular distribution}. Often a regular distribution $T_f$ and its associated function $f$ are identified with each other and no distinction is made in notation. We note that not every distribution is regular. For example, assuming $0 \in \Omega$, there is no locally integrable function which induces the Dirac delta function $\delta \in \mathscr{D}'(\Omega)$ defined by $\langle \delta,\varphi \rangle_{\mathscr{D}',\mathscr{D}}:=\varphi(0)$.

Furthermore, given a distribution $T \in \mathscr{D}'(\Omega)$, we may define the \emph{distributional derivatives} of $T$ by $$\langle \frac{\partial}{\partial x_i}T,\varphi \rangle_{\mathscr{D}',\mathscr{D}}:=-\langle T,\frac{\partial \varphi}{\partial x_i}\rangle_{\mathscr{D}',\mathscr{D}}$$ for all $i=1,\ldots,d$. If $T$ is a regular distribution induced by $f \in L^{1}_{\loc}(\Omega)$ and $f$ has a weak derivative $g:=\frac{\partial f}{\partial x_i} \in L^1_{\loc}(\Omega)$, then $\frac{\partial}{\partial x_i}T$ is the distribution corresponding to $g$. Using this notion, one may understand the Sobolev space $W^{1,1}_{\loc}(\Omega)$ as the space of regular distributions $f$ such that the distributional derivatives $\frac{\partial f}{\partial x_i}$ are all regular distributions.

\backmatter
\bibliographystyle{plain}

\end{document}